\newtheorem{theorem}{Theorem}[section]
\newtheorem{proposition}{Proposition}[section]
\newtheorem{lemma}{Lemma}[section]
\newtheorem{corollary}{Corollary}[section]
\newtheorem{definition}{Definition}[section]
\numberwithin{equation}{section} \numberwithin{theorem}{section}
\numberwithin{proposition}{section} \numberwithin{lemma}{section}
\numberwithin{corollary}{section}
\numberwithin{definition}{section} \numberwithin{remark}{section}
 \newcommand{\T}{\S\kern .15em\relax }
\newcommand{\R}{\mathbb{R}}
\newcommand{\N}{\mathbb{N}}
\title{Boundary regularity for the polyharmonic Dirichlet problem}
  \author{{\sc Antoine Lemenant}\footnote{Universit\'e de Lorraine, CNRS, Institut Elie Cartan de Lorraine, BP 70239 54506 Vand\oe uvre-l\`es-Nancy Cedex, France ({\tt antoine.lemenant@univ-lorraine.fr}).}~\footnote{Institut Universitaire de France (IUF)}
\text{ and } {\sc Rémy Mougenot}\footnote{Universit\'e de Lorraine, CNRS, Institut Elie Cartan de Lorraine, BP 70239 54506 Vand\oe uvre-l\`es-Nancy Cedex, France ({\tt remy.mougenot@univ-lorraine.fr}).} 
}
\date{\today}
\begin{document}

\maketitle
 
\begin{abstract}  In this paper we prove that any solution of the $m$-polyharmonic Poisson equation in a Reifenberg-flat domain with homogeneous Dirichlet boundary condition, is $\mathscr{C}^{m-1,\alpha}$ regular up to the boundary. To achieve this result we extend the  Nirenberg  method of translations to operators of arbitrary order, and then use some Mosco-convergence tools developped in a previous paper \cite{GLM23}. 
\end{abstract}

 \vspace{1cm}

\tableofcontents
 
 \newpage

\section{Introduction}

This paper is devoted to the  regularity up to the boundary,  for solutions of a general elliptic PDE of order $2m$ with constant coefficients.  A prototype   is, for instance,  the following $m$-polyharmonic Poisson equation:
\begin{eqnarray}
\left\{
\begin{array}{c}
(-\Delta)^{m}u=f \quad \text{ in } \Omega \\
u \in H^m_0(\Omega).
\end{array}
\right. \label{probP0}
\end{eqnarray}

For any bounded domain $\Omega$ and  $f\in L^2(\Omega)$, it is easy to see that there exists a unique solution to the above problem. In the following, we also impose a mild regularity condition on the domain $\Omega$. Specifically, we consider the relatively broad class of Reifenberg-flat domains. This concept encompasses, in particular, Lipschitz domains and has been utilized in numerous previous studies on the regularity of elliptic PDEs, as  for instance 
\cite{
reifenberg4,
reifenberg3,
reifenberg8,
zbMATH05978487,
reifenberg5,
reifenberg9, 
reifenberg2,
reifenberg1, 
LMS13,
LS13,
reifenberg6,
chinois,
reifenberg7}, to mention a few.  Let us first introduce some notation.


For every $r>0$ and $x\in \R^N$, $B(x,r)$ is the open ball of radius $r$ and centered at $x$, $B_r:=B(0,r)$. Moreover for $\lambda \in [-1,1]$ we will denote by  $B^\lambda(x,r)$  the truncated ball of level $\lambda$ defined by
\begin{align*}
    B^\lambda(x,r) := B(x,r) \cap \left\{y\in \R^N \; \middle| \; y_N>\lambda r\right\}.
\end{align*}
We also denote  $B^+(x,r):=B^0(x,r)$ the upper half-ball centered at $x$.

 \begin{definition}[\textnormal{Reifenberg flat domain}]\label{defreif}
Let $r_0>0$ and $\varepsilon_0\in(0,1]$. A bounded open set is $(\varepsilon_0, r_0)-$Reifenberg flat if for every $x\in \partial \Omega$ and $ r\in [0, r_0]$, there exists a rotation $R_x$ centered at $x$ such that
        \begin{align*}
    B^{\varepsilon_0}(x,r) \subset R_x(\Omega) \cap B(x,r) \subset B^{-\varepsilon_0}(x,r).
\end{align*}
\end{definition}

This definition is equivalent to consider a hyperplan $\mathscr{P}_x$ containing $x$ such that 
\begin{align*}
    d_\mathscr{H}(\partial \Omega \cap B(x,r), \mathscr{P}_x \cap B(x,r)) \leq \varepsilon_0r, \quad r\leq r_0,
\end{align*}
where $d_\mathscr{H}$ is the Hausdorff distance, and to suppose that
    \begin{align*}
        B(x,r) \cap \left\{ y \in \R^N\; \middle| \; {\rm }dist(y, \mathscr{P}_x)\geq2\varepsilon_0r\right\}
    \end{align*}
    has two connected components, one lying in $\Omega$, the other one lying in $\R^N \backslash \Omega$, see \cite{R60}.

    \medskip
     
    The purpose of this paper is to investigate the boundary regularity for the Problem \eqref{probP0} in Reifenberg flat domains.  Actually, it applies to more general operators of order $2m$   of the form
    \begin{align}
        \mathscr{A} := (-1)^m \sum_{\vert \alpha \vert =\vert \beta \vert= m}  a_{\alpha,\beta}\partial^{\beta} \partial^{\alpha}, \label{ellipticOperator0}
    \end{align}
    where $a_{\alpha,\beta} \in \R$ are real coefficients. We assume that $\mathscr{A}$ symmetric, i.e. $a_{\alpha,\beta}=a_{\beta,\alpha}$, and  is elliptic in the sense that there exists $C>0$ such that for all vectors $\xi=(\xi_{\alpha})_{|\alpha|=m}$ with $\xi_\alpha\in \R$ it holds,
    \begin{eqnarray}
    \sum_{\vert \alpha \vert =\vert \beta \vert= m}  a_{\alpha,\beta}\xi_\alpha \xi_{\beta}\geq C \sum_{\vert \alpha \vert =m} |\xi_\alpha|^2. \label{ellipticPartial0}
    \end{eqnarray}
      
   Here is our main result.
    
        \begin{theorem}\label{main}
    Let $\alpha \in (0,1)$ and $q\geq 2$ be such that $mq\geq N$ if $2m<N$, and $q=2$ otherwise. There exists $\varepsilon_0 \in (0,1)$ and $r_0 \in (0,1]$ such that for every $(\varepsilon_0,r_0)-$Reifenberg-flat domain $\Omega\subset \R^N$, for every function $f\in L^q(\Omega)$, if $u \in H^m_0(\Omega)$ is the weak solution of $\mathscr{A}(u) = f$, then $u\in \mathscr{C}^{m-1,\alpha}(\overline{\Omega})$ and 
    $$\|u\|_{\mathscr{C}^{m-1,\alpha}(\overline{\Omega})} \leq C \|f\|_{L^q(\Omega)},$$
    where $C>0$ depends on $N$,  $A$,  $\alpha$, $\Omega$ and $m$.
\end{theorem}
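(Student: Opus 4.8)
The plan is to reduce the global boundary-regularity statement to a quantitative decay estimate near a single boundary point, then iterate. The key is a $\mathscr{C}^{m-1,\alpha}$ Campanato-type characterization: it suffices to show that for every $x_0\in\overline\Omega$ there exist a polynomial $P_{x_0}$ of degree $m-1$ and constants $C,\rho_0>0$ such that
\[
\int_{B(x_0,\rho)\cap\Omega}|D^{m-1}u-D^{m-1}P_{x_0}|^2\,\mathrm{d}y\le C\rho^{N+2\alpha}
\]
for all $\rho\le\rho_0$, with the constants uniform in $x_0$. Interior points are handled by classical Schauder/$L^p$ theory for the constant-coefficient operator $\mathscr{A}$, so the whole difficulty is concentrated at boundary points $x_0\in\partial\Omega$, where I will use the Reifenberg flatness to compare $\Omega$ locally with a half-space.

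First I would set up the translation (Nirenberg difference-quotient) machinery for $\mathscr{A}$: for a tangential direction $e$ (tangential to the approximating hyperplane $\mathscr{P}_{x_0}$), test the weak formulation of $\mathscr{A}u=f$ against second differences $\tau_{-h e}\tau_{h e}\varphi$ and use ellipticity \eqref{ellipticPartial0} together with the symmetry $a_{\alpha,\beta}=a_{\beta,\alpha}$ to absorb the top-order term; this is the order-$2m$ analogue of Nirenberg's method and should give, after summing over the $N-1$ tangential directions, a bound on the tangential derivatives $\partial_\tau D^m u$ in $L^2$ near the boundary, hence $D^m u\in H^{1}_{\mathrm{tan}}$ locally. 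The normal derivatives are then recovered algebraically: the equation $\mathscr{A}u=f$ expresses the purely-normal top derivative $\partial_N^{2m}u$ (whose coefficient is bounded below by ellipticity, taking $\xi=e_N^{\otimes m}$) in terms of $f$ and derivatives with at least one tangential differentiation, so one bootstraps to control all of $D^{m}u$ in a fractional Sobolev space near $x_0$. Combined with the $L^q$ hypothesis on $f$ and the assumption $mq\ge N$ (or $q=2$ when $2m\ge N$), Sobolev embedding then upgrades this to the $\mathscr{C}^{m-1,\alpha}$ modulus on the flattened domain.

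The passage from ``locally almost a half-space'' to genuine half-space estimates is where the Mosco-convergence tools of \cite{GLM23} enter: I would argue by contradiction/compactness. Suppose the decay estimate fails; then there is a sequence of Reifenberg-flat domains $\Omega_k$ with flatness $\varepsilon_k\to0$, points $x_k\in\partial\Omega_k$, scales $\rho_k\to0$, and solutions $u_k$ for which the scaled quantity does not decay. Rescale by $\rho_k$ and recenter at $x_k$: the rescaled domains converge (in the sense that makes $H^m_0$ Mosco-converge, which is exactly what Reifenberg flatness with $\varepsilon_k\to0$ guarantees, via \cite{GLM23}) to a half-space $\R^N_+$, the rescaled right-hand sides tend to $0$ in the relevant norm, and the rescaled solutions converge strongly in $H^m_{\mathrm{loc}}$ to a solution $u_\infty\in H^m_0(\R^N_+)$ of $\mathscr{A}u_\infty=0$. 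For the half-space, the flat boundary estimates just established (translations are now literally available in all tangential directions, with no error) give that $u_\infty$ is a polynomial of degree $m-1$ that vanishes to order $m$ on $\{y_N=0\}$, hence $u_\infty\equiv0$ — contradicting the non-decay, which is preserved under the strong convergence. This yields the decay with a universal $\varepsilon_0,r_0$, and a standard iteration on dyadic annuli converts it into the Campanato estimate and hence the norm bound in the theorem.

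The main obstacle I anticipate is the difference-quotient estimate itself in the curved (pre-limit) domain: near a Reifenberg-flat boundary the admissible tangential translations do not map $\Omega$ into itself, so one cannot directly test with $\tau_{-he}\tau_{he}\varphi$ for $\varphi\in H^m_0(\Omega)$. Handling this requires either a careful localization with cutoffs that are themselves controlled in $H^m$ (costing lower-order terms that must be reabsorbed, delicate because there are $m$ derivatives to distribute via Leibniz), or — cleaner — doing the honest difference-quotient argument only in the limiting half-space and relying on Mosco convergence plus the contradiction scheme to transfer it, which is the route I expect to take and the reason \cite{GLM23} is invoked. A secondary technical point is verifying that the $2m$-th order Nirenberg argument genuinely closes: one must check that testing against second-order differences produces, after using symmetry, a nonnegative quadratic form in $\{D^m(\tau_h u)\}$ bounded below via \eqref{ellipticPartial0}, with all commutator/remainder terms of strictly lower order in $h$.
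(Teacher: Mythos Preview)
Your outline shares the paper's architecture --- Nirenberg estimates in the flat case, a compactness/Mosco argument to transfer decay to Reifenberg-flat boundaries, then Campanato --- but two of the load-bearing steps do not go through as written.

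First, the contradiction in your blow-up argument does not close. After rescaling and normalizing so that the energy in the unit ball is one, the limit $u_\infty$ is only in $H^m_{\mathrm{loc}}$; you have no global control, so the claim $u_\infty\in H^m_0(\R^N_+)$ is unjustified and the Liouville conclusion ``$u_\infty$ is a polynomial of degree $m-1$, hence $u_\infty\equiv 0$'' fails. Concretely, $u_\infty$ is $\mathscr{A}$-harmonic only in the half-ball $B^+(0,1)$ with zero data on the flat part, and such functions are far from trivial: for $\mathscr{A}=\Delta^2$ the function $(x_N)_+^2$ is biharmonic in $B^+(0,1)$, has the right $H^2_0$ trace on $\{x_N=0\}$, and is not zero. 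The paper's compactness (Proposition~\ref{propdecroissanceenergiedebut}) instead targets only a \emph{single-scale} inequality $\int_{B_{ar}}|\nabla^m v|^2\le Ca^{N-b}\int_{B_r}|\nabla^m v|^2$ for $\mathscr{A}$-harmonic $v$: the limit $u_\infty$ is smooth up to the flat boundary by Corollary~\ref{theoremederegularite3}, hence $\|\nabla^m u_\infty\|_{L^\infty(B_{1/2})}\le C_0$, which forces this decay and gives the contradiction. The full power-law decay then comes from iterating that one step across scales, and the right-hand side $f$ is handled afterwards by comparison with an $\mathscr{A}$-harmonic replacement (Proposition~\ref{decroissanceenergiereifenbergplat}), not by absorbing it into the compactness.

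Second, your flat-case recovery of normal derivatives has a genuine gap. Tangential difference quotients yield $\partial^\gamma\nabla^m u\in L^2$ for all $\gamma$ with $\gamma_N=0$, i.e.\ control of derivatives of total order $2m$ having at most $m$ normal factors. The single scalar equation $\mathscr{A}u=f$ then expresses $\partial_N^{2m}u$ in terms of $f$ and the remaining $2m$-th order derivatives --- but those include terms such as $\partial_1\partial_N^{2m-1}u$, which for $m\ge 2$ has more than $m$ normal factors and is \emph{not} yet controlled. One use of the equation does not suffice; your word ``bootstrap'' hides the whole difficulty. The paper fills this by induction on $m$: it decomposes $\mathscr{A}=\mathscr{B}-\partial_N\mathscr{D}(\partial_N\,\cdot\,)$ with $\mathscr{D}$ elliptic of order $2(m-1)$ (Lemma~\ref{decomposition0}), applies the induction hypothesis to $\partial_N u$ to reach $u\in H^{2m-1}$, and only then reads off $\partial_N^{2m}u$ from the equation.
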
 

What is somehow surprising in Theorem \ref{main}, is that we are able to obtain fairly strong regularity up to the boundary, in the class $\mathscr{C}^{m-1,\alpha}$, although $\partial \Omega$ enjoys only very poor regularity, not better than $\mathscr{C}^{0,\gamma}$.  Indeed, a Reifenberg-flat domain is less regular that Lipschitz, it could even  have a  fractal boundary. The high regularity of solution in this poor regularity class of domains is possible due to the fact that $u\in H^{m}_0(\Omega)$ thus vanishes at the boundary, as well as all its $k$th derivatives, up to the order $m-1$, leaving a chance to be more regular up to the boundary, than the boundary itself.

For instance our result immediately implies that in dimension $3$ and in the case of the bi-laplacian, if $u\in H^2_0(\Omega)$ and $\Delta^2 u \in L^2(\Omega)$, then $u\in\mathscr{C}^{1,\alpha}(\overline{\Omega})$, while $\Omega$ is only Reifenberg-flat.

Notice that for the interior regularity, if $\Delta^m(u)=f$ with $f\in L^q$ then $\Delta(\Delta^{m-1})(u)=f$ so that $\Delta^{m-1}u\in W^{2,q}$ by classical Calderon-Zygmund estimates on the standard Laplacian. Iterating this $m$ times we arrive to the fact that $u\in W^{2m,q}$.   Hence, if $mq>N$,  the Sobolev embedding theorem implies that $\nabla^m u \in \mathscr{C}^{0,\alpha}$, or in other words $u \in \mathscr{C}^{m,\alpha}$. Of course this argument cannot be applied at the boundary because the Dirichlet boundary condition is not preserved while taking derivatives. But it is interesting to notice that comparing to the statement of Theorem~\ref{main}, since we arrive at the boundary up to   $u \in \mathscr{C}^{2m-1,\alpha}$ and not $\mathscr{C}^{2m,\alpha}$, we loose one derivative  with respect to what we could expect in the interior. This is due to the weak regularity of the boundary that prevents us to go beyond.

Let us furthermore emphasis that our result is sharp. Indeed, for instance for $m=1$ it is easy to construct a harmonic function with Dirichlet boundary condition on a cone fo aperture $\pi+\varepsilon_0$, that is in $\mathscr{C}^{0,\alpha}$ but not Lipschitz (see for instance Remark 18 in \cite{LMS13}).

A similar result as our Theorem \ref{main} has been partially obtained earlier in \cite{LS13} in the case $m=1$, i.e for the standard Laplacian, where a priori estimates $u \in L^p(\Omega)$ and $f \in L^q(\Omega)$ leads to the $\mathscr{C}^{0,\alpha}(\overline{\Omega})$ regularity,  provided that $\Omega$ is ``flat" enough. Our result includes the one in \cite{LS13}, and is even more general for the case of the standard Laplacian ($m=1$) because we do not assume any a priori bound on $u \in L^p(\Omega)$ as in \cite{LS13}. The technique is also different since in \cite{LS13} a monotonicity formula has been used, which is not available for the polyharmonic operator.

 The result in \cite{LS13} has also been extended for general elliptic operators of order 2 in  \cite{chinois}, using some barrier arguments. This technique cannot be applied   for operators  of highier order since it relies on the maximum principle.

 Notice that in the recent preprint \cite{prade2025boundaryregularitynonlocalelliptic}, a similar regularity result has been obtained for the fractional Laplacian $(-\Delta)^s$. For the particular case $s=1$, the result in   \cite{prade2025boundaryregularitynonlocalelliptic} is coherent with the  particular case of our result with $m=1$.

 For  the parabolic case, an $L^p$ theory has been developed in \cite{zbMATH05978487} in a general context of varying $BMO$ coefficients, from which one can probably deduce  a similar  statement as our main result, after an application of  the Sobolev embedding theorem. However, the technique employed in the present paper is completely different from the one in \cite{zbMATH05978487}, and somehow simpler.

It is worth mentioning that many works by S. Mayboroda and co-autors are dedicated to the study of the bi-harmonic operator in a general class of wild domains, including in particular Reifenberg-flat domains (see for instance \cite{M1,M2,M3,M4}). The context in these works is a bit different  since they are interested in the polyharmonic measure thus focus on the  delicate problem $\Delta^2u=0$  in $\Omega$ with $u=f$ on the boundary. In the present paper instead, we are looking for  $u=0$ on the boundary with a source $\Delta^2u=f$ in $\Omega$, which is slightly different.

 Our approach, which is completely self-contained in this paper, uses variational technics that are well adapted for solutions with homogenous boundary conditions, that might not work so well with an inhomogeneous condition. On the other hand, it provides some very precise energy estimates at the boundary in that particular case, that one could probably not obtain so easily with the general tools developed for the polyharmonic measure.

 We also believe that our result could be extended to more general operators of divergence type with H\"older continuous coefficients, but we do not pursue this generalization for sake of simplicity, restricting ourselves to constant coefficients.


\medskip

{\bf Ideas of proof and structure of the paper.} Let us now describe the ideas behind the proof of Theorem \ref{main}. The main point is to arrive to a decay behavior of the energy at the boundary for a poly-harmonic function, of the following type: there exists $b,C,r_0>0$ such that for all $x\in \partial \Omega$ and $r\leq R \leq r_0$, (in the sequel, a function in $H^m_0(\Omega)$ is considered as being extended by $0$ outside $\Omega$)
    \begin{align}\label{decayE}
          \int_{B(x,r )}\vert \nabla^m u \vert^2 dx \leq C\left(\frac{r}{R}\right)^{N-b}\int_{B(x,R)}\vert \nabla^m u \vert^2 dx, \quad r \in(0,R).
    \end{align}
   If  \eqref{decayE} holds, then one arrives to say that  $u \in \mathscr{C}^{m-1,\alpha}(\overline{\Omega})$ by standard Campanato theory.

Now  to obtain  \eqref{decayE} at the boundary point of a  Reifenberg-flat domain, the first step is to prove that it holds true when the boundary is a hyperplane. Once this is established, then we can argue by compactness and obtain that it still holds true when the boundary is close enough to a hyperplane. This is done in Proposition \ref{decayE}, which uses in particular the Mosco-convergence theorem for higher order operators contained in  \cite{GLM23}. Since a Reifenberg flat domain is $\varepsilon_0$-close to a hyperplane at any scale, we can then conclude that the same decay holds  for a solution in a Reifenberg-flat domain, leading to our main $\mathscr{C}^{m-1,\alpha}$ result.

Therefore, a fairly big part of this paper is actually  devoted to the proof of the energy decay property \eqref{decayE}, in the case of a perfect flat boundary, i.e. when $\partial \Omega$ coincides with a hyperplane.

For the standard regularity theory for operators of order $2m$, one usually quote \cite{ADN1,ADN2}, based on Fourier analysis. However, we could not find in the literature, the exact statement that is needed for our purposes. Instead,  we provide in this paper a complete and self-contained proof, based on the Nirenberg translation method, that allows us to control precisely all the constants.

 The interesting fact is that  we have to adapt this method in a non trivial way for higher order operators, compared to what is commonly known for the Laplace operator. Up to our knowledge,  we believe this argument to be new and interesting for its own. Indeed, let us explain the difficulty.

For the recall, a usual way to treat the problem of boundary regularity in the flat case, for instance when  $\Omega=\{e_N>0\}$, is to consider a  direction of translation  $h\in \R^N\backslash \{0\}$ and then introduce  the translated function 
    $$ D_{h}  u(x) = \frac{u(x+h)-u(x)}{ \vert h \vert }, \quad x\in \Omega.$$
    The main idea is the following: up to localize with a cut-off function and provided that  $h$ goes in the horizontal directions (i.e. for $h=e_i$ and $1\leq i \leq N-1$),  the function $D_h   u$ is an admissible test  function in $H^m_0(\{e_N>0\})$ in the weak formulation of the problem. For the polyharmonic problem  this leads to an apriori bound  of the type 
    $$\|D_h \nabla^mu\|_2 \leq C.$$
    Since $C$ is uniform in $h$, we easily conclude by taking $h\to 0$ that actually $u$ admits one more derivative in all the horizontal directions. We can continue inductively and actually prove that all the derivatives of the form $\partial^\alpha \nabla^m u$ are in $L^2$, where $\alpha$ are multi-indices involving only horizontal directions. Then we only miss the vertical direction, in which the translation is not admissible as a test function.

    For the standard Laplacian,  the usual trick works as follows: we  can recover the vertical derivatives by use of the operator. Indeed, if $-\Delta u=f$ then
   $$-\partial_N^2 u = f +\sum_{i=1}^{N-1} \partial^2_i u,$$
   and this is how we obtain that $\partial_N^2 u \in L^2$ because  we know that all the $\partial^2_i u$ for $1\leq i \leq N-1$ are in $L^2$.  We then conclude that $u \in H^2$, which is a gain of regularity compared to the original information that $u \in H^1$.

For higher order operators $\mathscr{A}$, we cannot recover the estimate on $\partial_N^{2m}u$ so easily because we need to pass from $\nabla^m u \in L^2$ to $\nabla^{2m} u \in L^2$: in other words we have to win $m$ derivatives, and not only one.  Moreover, the operator is more complicated and involves multiples derivatives of different order.

As a result, our proof works by induction on the order  of the operator. We first establish  the regularity theorem  for $m=1$, which is the case for instance for the standard Laplace operator. Then assuming that the theorem is true for operators $\mathscr{A}$ of order $2(m-1)$, we prove that it holds true for operators of order $2m$. The main tool is to write  a general operator $\mathscr{A}$ of order $2m$ as the sum (see Lemma \ref{decomposition0}):
 \begin{eqnarray}
\mathscr{A}(u)=      \mathscr{B}(u)    -\partial_N\mathscr{C}(\partial_N u), \label{dec}
\end{eqnarray}
where $\mathscr{B}(u)$ contains at most  $m$-derivatives in the $e_N$ direction, so that it belongs to $L^2$, and $\mathscr{C}$  is an elliptic operator of order $2(m-1)$, on which we can apply our induction hypothesis. Indeed, since $\mathscr{A}(u)\in L^2$ by assumption, we deduce that $\partial_N\mathscr{C}(\partial_N u) \in L^2$. Then by use of a certain Poincar\'e inequality, this actually yields $\mathscr{C}(\partial_N u) \in L^2$, and thus $\partial_N u \in H^{2(m-1)}$ thanks to our induction hypothesis (i.e. using that the Theorem is true for operators of order $2(m-1)$). All this implies that $u \in H^{2m-1}$.

Now we  are still missing one last derivative because we need to achieve $H^{2m}$ instead of $H^{2m-1}$. This will be done by use of  the  operator, for a last time again.  Indeed, at this stage of the proof, we have a control on  all the derivatives of the form $\partial^{\alpha}u$ with $|\alpha|=2m$, provided that  $\alpha_N\leq 2m-1$. In other words, to conclude that $u \in H^{2m}$ we only miss the last derivative $\partial^{2m}_N u$.  But this term appears in  $\mathscr{A}$ only once. Namely, we have 
\begin{eqnarray}
\mathscr{A}=(-1)^m a_{me_N, me_N} \partial^{2m}_N  + \mathscr{E}
\end{eqnarray} 
where $\mathscr{E}$ contains at most $2m-1$ derivatives in the $e_N$ direction. This is how we get  $\partial^{2m}_N u \in L^2$, and finally conclude that $u \in H^{2m}$, as desired.

All this leads to the regularity result that is contained in Theorem \ref{theoremederegularite2}, that we state here in the introduction to   enlight   what we have obtained with this method.

 \begin{theorem}[Regularity with  flat boundary]\label{theoremederegularite20}
    Let $m\in \N^*$. Let $\mathscr{A}$ be an  operator of order $2m$ of the form \eqref{ellipticOperator0} and satisfying the ellipticity condition \eqref{ellipticPartial0}. Let  $u \in H^m(B(0,1))$ be a weak solution for $ \mathscr{A}u=f$ in $B^+(0,1)$ with  $f \in H^\ell(B^+(0,1))$, and such that $u=0$ in $B(0,1)\cap \{x_N<0\}$. Then $u \in H^{2m+\ell}(B^+(0,1/2))$ and
    $$\|u\|_{H^{2m+\ell}(B^+(0,\frac{1}{2}))}\leq C(\|\nabla^m u\|_{L^{2}(B^+(0,1))} + \|f\|_{H^\ell(B^+(0,1)}),$$
 where $C>0$ is a constant that depend  on    $N$,  $\max_{\alpha,\beta} |a_{\alpha,\beta}|$, $l$ and $m$.
  \end{theorem}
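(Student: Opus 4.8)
The plan is to prove Theorem~\ref{theoremederegularite20} by induction on $m$, following exactly the strategy outlined in the introduction. For the base case $m=1$, the operator is a second-order elliptic operator with constant coefficients, and the statement is essentially the classical Nirenberg difference-quotient argument: one localizes with a cut-off $\varphi$ supported in $B(0,1)$ and equal to $1$ on $B(0,1/2)$, tests the weak formulation with $D_{-h}(\varphi^2 D_h u)$ for horizontal directions $h = te_i$, $1 \le i \le N-1$, uses the ellipticity \eqref{ellipticPartial0} together with the product rule for difference quotients to absorb error terms, and obtains a uniform bound $\|D_h \nabla u\|_{L^2(B(0,3/4))} \le C(\|\nabla u\|_{L^2(B^+(0,1))} + \|f\|_{L^2(B^+(0,1))})$. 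Letting $t \to 0$ gives $\partial_i \nabla u \in L^2$ for horizontal $i$, hence all second derivatives $\partial_i \partial_j u$ with $(i,j) \ne (N,N)$ are controlled; the missing $\partial_N^2 u$ is then recovered algebraically from the equation $(-1)a_{e_N,e_N}\partial_N^2 u = -f - (\text{other second derivatives})$, using that $a_{e_N,e_N} \ge C > 0$ by ellipticity. This yields $u \in H^2(B^+(0,1/2))$ with the stated estimate. The higher $\ell$ is then obtained by differentiating the equation in horizontal directions and iterating (the regularity in the $e_N$ direction again recovered from the equation).

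For the inductive step, assume the theorem holds for all elliptic operators of order $2(m-1)$ of the form \eqref{ellipticOperator0} (with the corresponding ellipticity). Given $\mathscr{A}$ of order $2m$ and $u \in H^m$ solving $\mathscr{A}u = f$ in $B^+(0,1)$ with $u = 0$ below the hyperplane, the first move is the horizontal Nirenberg argument again: testing with difference quotients in directions $h = te_i$, $1 \le i \le N-1$, localized by a cut-off, one gains that $\partial^\alpha \nabla^m u \in L^2$ for every multi-index $\alpha$ involving only horizontal directions — in particular all derivatives $\partial^\gamma u$ with $|\gamma| = 2m$ and $\gamma_N \le m$ lie in $L^2(B^+(0,3/4))$, with the appropriate a~priori bound. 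Next I would invoke the decomposition \eqref{dec} from Lemma~\ref{decomposition0}: write $\mathscr{A}(u) = \mathscr{B}(u) - \partial_N \mathscr{C}(\partial_N u)$ where $\mathscr{B}(u)$ uses at most $m$ derivatives in $e_N$ (hence is already in $L^2$ by the previous step) and $\mathscr{C}$ is elliptic of order $2(m-1)$. Since $\mathscr{A}(u) = f \in L^2 \subset H^\ell$ (at least $L^2$), we get $\partial_N \mathscr{C}(\partial_N u) \in L^2$; a Poincaré-type inequality (using that $\partial_N u$ and its lower-order traces vanish at the flat boundary, so $\mathscr{C}(\partial_N u)$ vanishes there in the appropriate weak sense) upgrades this to $\mathscr{C}(\partial_N u) \in L^2$. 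Applying the induction hypothesis to $v := \partial_N u \in H^{m-1}(B(0,3/4))$, which solves $\mathscr{C}(v) = g$ in $B^+$ with $g \in L^2$ and $v = 0$ below the hyperplane, yields $\partial_N u \in H^{2(m-1)}(B^+(0,1/2))$, i.e. $u \in H^{2m-1}$ in the $e_N$-loaded directions, with the quantitative estimate.

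At this point all derivatives $\partial^\gamma u$ with $|\gamma| = 2m$ and $\gamma_N \le 2m-1$ are in $L^2$; the single remaining derivative $\partial_N^{2m} u$ is recovered by isolating it in the operator, $\mathscr{A} = (-1)^m a_{me_N,me_N}\partial_N^{2m} + \mathscr{E}$ with $\mathscr{E}$ carrying at most $2m-1$ derivatives in $e_N$, noting that $a_{me_N,me_N} \ge C > 0$ by \eqref{ellipticPartial0} (taking $\xi = e_{me_N}$), so $\partial_N^{2m} u = (-1)^m a_{me_N,me_N}^{-1}(f - \mathscr{E}u) \in L^2$. This closes $u \in H^{2m}(B^+(0,1/2))$ with the claimed bound. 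The extension to general $\ell \ge 1$ is handled by the same bootstrapping: differentiate the equation $\ell$ times in horizontal directions to control the extra horizontal derivatives, then re-run the decomposition/Poincaré/induction and the final algebraic step to recover the vertical derivatives at each level. Throughout, one must be careful that every constant depends only on $N$, $\max_{\alpha,\beta}|a_{\alpha,\beta}|$, $\ell$, $m$ — which is guaranteed because each ingredient (difference quotients, the Poincaré inequality, the ellipticity constant extracted from \eqref{ellipticPartial0}) has that dependence — and that the shrinking of radii from $1$ to $3/4$ to $1/2$ across the finitely many steps is absorbed harmlessly.

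I expect the main obstacle to be the step where one passes from $\partial_N \mathscr{C}(\partial_N u) \in L^2$ to $\mathscr{C}(\partial_N u) \in L^2$ via a Poincaré inequality: one needs the function $w := \mathscr{C}(\partial_N u)$, a priori only a distribution, to have enough structure (essentially a vanishing trace on $\{x_N = 0\}$ inherited from the homogeneous boundary conditions on $u$) for ``$\partial_N w \in L^2$'' to force ``$w \in L^2$'' on the half-ball with quantitative control. Making this rigorous — identifying in what sense $w$ vanishes at the boundary, and proving the corresponding one-dimensional-in-$x_N$ Poincaré estimate integrated over the horizontal slices — is the delicate point, and also the place where the flatness of the boundary is genuinely used. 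A secondary technical nuisance is bookkeeping the localization cut-offs so that the horizontal difference-quotient estimates and the induction hypothesis (which is itself stated on half-balls) can be chained without losing the explicit constant dependence.
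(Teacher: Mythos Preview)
Your overall architecture is exactly the paper's: induction on $m$, horizontal Nirenberg to control $\partial^\gamma\nabla^m u$ for $\gamma_N=0$, the decomposition of Lemma~\ref{decomposition0} to reduce to an elliptic operator of order $2(m-1)$ applied to $\partial_N u$, the induction hypothesis to get $\partial_N u\in H^{2m-2}$, and the algebraic recovery of $\partial_N^{2m}u$ from the equation. The base case and the bootstrap to general $\ell$ are also the same.

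The one place where your proposal diverges from the paper, and where your suggested mechanism would not work, is precisely the step you flagged as delicate. You propose to pass from $\partial_N w\in L^2$ to $w\in L^2$ (with $w=\mathscr{C}(\partial_N u)$, the paper's $\mathscr{D}(\partial_N u)$) via a Poincar\'e inequality based on a ``vanishing trace of $w$ on $\{x_N=0\}$ inherited from the homogeneous boundary conditions on $u$''. But $w$ has no reason to vanish there: $\mathscr{C}$ carries $2(m-1)$ derivatives while $\partial_N u$ only vanishes to order $m-2$ at the flat boundary, so for $m\ge 2$ the trace of $w$ is generically nonzero (e.g.\ for $m=2$ one is looking at a second derivative of $\partial_N u$). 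The paper avoids this entirely: it first invokes the \emph{interior} regularity theorem (Theorem~\ref{theoremederegularite}) to conclude that $w\in L^2$ on any compact subset of $B^+(0,1/2)$, in particular on an upper slab $\{x_N>\lambda r\}$; then it applies Lemma~\ref{inegalitedeplacementmasse}, which says that $\|w\|_{L^2(Q_r)}$ is controlled by $\|w\|_{L^2(Q_r^\lambda)}+\|\partial_N w\|_{L^2(Q_r)}$ with no boundary condition on $w$ whatsoever. So the Poincar\'e-type estimate propagates $L^2$ control \emph{downward from the interior}, not upward from the flat boundary. Once you replace your trace argument by this interior-regularity-plus-Lemma~\ref{inegalitedeplacementmasse} step, your sketch becomes the paper's proof.
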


We also have written the interior case, analogous to Theorem \ref{theoremederegularite20} (see Theorem \ref{theoremederegularite}). As a matter of fact, by reasoning with local charts,  our paper provides  a self-contained and full regularity theory for the Dirichlet problem of $2m$-order in a smooth domain.

A funny observation is that our method really needs to work with general operators of the form \eqref{ellipticOperator0}, and does not simplify for the special case of $(-\Delta)^m$. Indeed,  while performing the  induction argument on $(-\Delta)^m$ we would need to decompose it as the sum \eqref{dec}, in which an operator of lower order appears, which is not $(-\Delta)^{m-1}$, but more general, like in   \eqref{ellipticOperator0}.

Then in the next section we use this regularity result to establish a decay property of the type \eqref{decayE} for a poly-harmonic function at a boundary point of a Reifenberg-flat domain. This is obtained by use of a compactness argument (see Proposition \ref{propdecroissanceenergiedebut}). Once this is done, we can use a ``poly-harmonic'' replacement argument  to derive a similar decay property for a solution with a second member. This is done in Proposition \ref{decroissanceenergiereifenbergplat}, that leads to the proof of our  main result, Theorem \ref{main}.

\section{Preliminaries}

   As usual we denote by $H^m(\Omega)$ the Sobolev space endowed with the norm
    $$\|u\|_{H^m(\Omega)}^2=  \sum_{|\alpha|\leq m}\|\partial^\alpha u\|_2^2$$
    and we denote by $H^{m}_0(\Omega)$ the closure of $C^\infty$ function with compact support in $\Omega$. Since a Reifenberg-flat domain enjoys the so-called corskcrew condition, it follows that
    \begin{eqnarray}
     H^{m}_0(\Omega)=\{u \in H^m(\R^N) \text{ s.t. } u=0 \text{ a.e.  on  } \R^N \setminus \Omega\}. \label{netrusov}
     \end{eqnarray} 
 This is actually Corollary 5.1. in \cite{GLM23}, and we refer to \cite{GLM23} for more details. As a consequence, in all the paper we will consider $u\in H^1_0(\Omega)$ as begin a function of $H^m(\R^N)$, extended by $0$ on $\Omega^c$.

 We will also use \eqref{netrusov} implicitly while dealing with an mixed boundary value problem in a half ball.  For instance in Section \ref{boundaryReg} we will consider $u \in H^m(B(0,1))$ such that $u=0$ a.e. on $B(0,1)\cap \{ x_N<0\}$. It is known (see for instance  \cite{GLM23}) that such a function can be approximated by $v_n\to u$ in $H^m(B(0,1))$ such that ${\rm supp}(v_n)\subset B^+(0,1)$ for all $n$. This fact will also be used in the proof of Proposition \ref{propdecroissanceenergiedebut}.

\subsection{Multi-index and Leibniz formula} A multi-index is a vector $\alpha \in \N^N$. The length of $\alpha$ is denoted by $|\alpha|$ which is the sum of all the coefficients $\alpha_i$. The partial derivative $\partial^\alpha$ means that we take all derivatives $\partial_{x_i}^{(\alpha_i)}$. We denote by $\alpha!$ the number $\alpha_1!\alpha_2!\dots \alpha_N!$ and we say that $\alpha \leq \beta$ when $\alpha_i\leq \beta_i$ for all $1\leq i\leq N.$

 If $u,v$ are two smooth functions then the Leibniz formula says
\begin{align}\label{leibniz}
    \partial^\alpha (uv) &= \sum_{\beta\leq \alpha}\frac{\alpha!}{\alpha! (\alpha-\beta)!}\partial^{\beta}u\partial^{\alpha-\beta}v.
\end{align}

\subsection{Elliptic operator of order $2m$} Let $m\in \N^*$. We consider a general  operator $\mathscr{A}$  of order $2m$ of the form
    \begin{align}
        \mathscr{A} := (-1)^m \sum_{\vert \alpha \vert =\vert \beta \vert= m}  \partial^\beta(a_{\alpha,\beta} \partial^{\alpha})=(-1)^m \sum_{\vert \alpha \vert =\vert \beta \vert= m}  a_{\alpha,\beta}\partial^{\beta} \partial^{\alpha}, \label{ellipticOperator}
    \end{align}
    where $a_{\alpha,\beta} \in \R$ are real coefficients. We assume that $\mathscr{A}$ symmetric, i.e. $a_{\alpha,\beta}=a_{\beta,\alpha}$, and  is elliptic in the sense that there exists $C>0$ such that for all vectors $\xi=(\xi_{\alpha})_{|\alpha|=m}$ with $\xi_\alpha\in \R$ it holds,
    \begin{eqnarray}
    \sum_{\vert \alpha \vert =\vert \beta \vert= m}  a_{\alpha,\beta}\xi_\alpha \xi_{\beta}\geq C \sum_{\vert \alpha \vert =m} |\xi_\alpha|^2. \label{ellipticPartial}
    \end{eqnarray}
    
 \subsection{Weak Formulation}
 For  $f\in L^2(\Omega)$, the existence and uniqueness for the problem
\begin{eqnarray}
\left\{
\begin{array}{c}
  \mathscr{A}u=f \quad \text{ in } \Omega \\
u \in H^m_0(\Omega)
\end{array}
\right. \label{probP}
\end{eqnarray}
can be obtained considering minimizers of

$$\min_{u\in H^m_0(\Omega)} \int_{\Omega} \sum_{\vert \alpha \vert =\vert \beta \vert= m} a_{\alpha,\beta} \partial^\alpha u \partial^\beta u \;dx - \int_{\Omega} uf \;dx.$$

The weak formulation of this problem is 
\begin{eqnarray}
 \int_{\Omega} \sum_{\vert \alpha \vert =\vert \beta \vert= m} a_{\alpha,\beta} \partial^\alpha u \partial^\beta \varphi \;dx = \int_{\Omega} \varphi f \;dx \quad \text{ for all } \varphi \in H^m_0(\Omega). \label{weakForm}
 \end{eqnarray}

\subsection{Matrix formulation} For $u \in H^m(\Omega)$ we denote by $\nabla^m u$ the vector of all derivatives of order $m$, i.e. $\nabla^m u=(\partial^\alpha u)_{|\alpha|=m}$. Then the weak formulation \eqref{weakForm} can also be written as follows,
\begin{eqnarray}
 \int_{\Omega} A \nabla^m u \cdot \nabla^m \varphi \;dx = \int_{\Omega} \varphi f \;dx \quad \text{ for all } \varphi \in H^m_0(\Omega), \label{weakForm2}
 \end{eqnarray}
 with  $A=(a_{\alpha,\beta})$.   
 

\subsection{Poly-harmonic operators}

In particular, our class of operators contains   the poly-harmonic operator $(-\Delta)^m$ (see \cite{gazzola}) by taking the particular case 
 $a_{\alpha,\beta}=\frac{m!}{\alpha!}\delta_{\alpha,\beta}$ where $\delta_{\alpha,\beta}=1$ if $\alpha=\beta$ and $0$ otherwise. In this case,  
 \begin{eqnarray}
    \sum_{\vert \alpha \vert =\vert \beta \vert= m} a_{\alpha,\beta}\xi_\alpha \xi_{\beta}= \sum_{\vert \alpha \vert =m} \frac{m!}{\alpha!} |\xi_\alpha|^2, \notag
    \end{eqnarray}
and therefore it is clear that this operator satisfies the ellipticity condition \eqref{ellipticPartial}.

Even if the following fact will not be used in this paper, it is worth mentioning that  by use of a standard integration by parts valid in $H^m_0(\Omega)$, one can prove that  the associated bilinear form that appears  in the weak formulation for $(-\Delta)^m$ becomes 

\begin{eqnarray}
 \int_{\Omega} \sum_{\vert \alpha \vert =\vert \beta \vert= m} a_{\alpha,\beta} \partial^\alpha u \partial^\beta \varphi \;dx
 &=& 
 \left\{
 \begin{array}{cc}
 \int_{\Omega} \Delta^{\frac{m}{2}}u \Delta^{\frac{m}{2}}\varphi  \;dx& \text{ if } m \text{ is  even,}\\
  \int_{\Omega}\nabla \Delta^{\frac{m-1}{2}}u \cdot\nabla \Delta^{\frac{m-1}{2}}\varphi \;dx& \text{ if } m \text{ is  odd.}\\
 \end{array}
 \right. \notag
 \end{eqnarray}

\subsection{Decomposition of Elliptic operators}  In the sequel we will need the following Lemma, which is one of the key step in our proof.

\begin{lemma} \label{decomposition0}  Let  $\mathscr{A}$ be an operator of order $2m$ of the form \eqref{ellipticOperator} that satisfies the ellipticity condition \eqref{ellipticPartial}. Then  $\mathscr{A}$ can be splitted in two parts, $\mathscr{A}=      \mathscr{B} +    \mathscr{C}$, as follows
\begin{eqnarray}
 \mathscr{A}&:=&(-1)^m\sum_{|\alpha|=\vert \beta \vert = m} a_{\alpha,\beta} \partial^\beta \partial^\alpha \notag \\
 &=&\underbrace{(-1)^m\sum_{\substack{|\alpha|=\vert \beta \vert = m \\ \alpha_N=0 \text{ or } \beta_N=0}}\partial^\beta( a_{\alpha,\beta} \partial^\alpha)}_{ \mathscr{B} } \; +\; \underbrace{(-1)^m\sum_{\substack{|\alpha|=\vert \beta \vert = m \\ \alpha_N>0 \text{ and  } \beta_N>0}}\partial^\beta( a_{\alpha,\beta} \partial^\alpha)}_{ \mathscr{C}}, \notag  
 \end{eqnarray}
in such a way that 
$$ \mathscr{C}:= (-1)^m \sum_{\substack{|\alpha|=\vert \beta \vert = m \\ \alpha_N>0 \text{ and  } \beta_N>0}}\partial^\beta( a_{\alpha,\beta} \partial^\alpha)=\mathscr{D}\circ (-\partial_N^2),$$
where $\mathscr{D}$ is elliptic, as  an operator of  order $2m-2$.
 \end{lemma}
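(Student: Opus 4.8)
The plan is to reorganize the double sum defining $\mathscr{C}$ so that one factor of $-\partial_N^2$ can be pulled out on the right, and then to check that the residual operator $\mathscr{D}$ of order $2m-2$ is elliptic. First I would rewrite each admissible pair of multi-indices as $\alpha = \alpha' + e_N$ and $\beta = \beta' + e_N$, which is legitimate precisely because the sum in $\mathscr{C}$ ranges only over those $\alpha,\beta$ with $\alpha_N>0$ and $\beta_N>0$; here $\alpha'$ and $\beta'$ are multi-indices with $|\alpha'| = |\beta'| = m-1$. Then $\partial^\beta\partial^\alpha = \partial^{\beta'}\partial^{\alpha'}\partial_N^2$, so that
$$
\mathscr{C} = (-1)^m \sum_{\substack{|\alpha'| = |\beta'| = m-1}} a_{\alpha'+e_N,\,\beta'+e_N}\,\partial^{\beta'}\partial^{\alpha'}\partial_N^2
= \Bigl( (-1)^{m-1}\!\!\sum_{|\alpha'| = |\beta'| = m-1} b_{\alpha',\beta'}\,\partial^{\beta'}\partial^{\alpha'} \Bigr)\circ(-\partial_N^2),
$$
where I set $b_{\alpha',\beta'} := a_{\alpha'+e_N,\,\beta'+e_N}$. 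Defining $\mathscr{D}$ to be the bracketed operator of order $2(m-1)$ gives $\mathscr{C} = \mathscr{D}\circ(-\partial_N^2)$, and $\mathscr{B} := \mathscr{A} - \mathscr{C}$ is by construction the sum over pairs with $\alpha_N = 0$ or $\beta_N = 0$, so it contains at most $m-1$ derivatives in the $e_N$ direction in each of the factors $\partial^\alpha$, $\partial^\beta$ (hence at most $2m-2 < 2m$ total, and at most $m$ in the form relevant to the paper's use).

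The substantive point — and the one I expect to be the main obstacle — is the ellipticity of $\mathscr{D}$. Note that $b_{\alpha',\beta'} = a_{\alpha'+e_N,\beta'+e_N}$ inherits symmetry from $\mathscr{A}$, so $\mathscr{D}$ has the right algebraic form. For ellipticity I need: there is $C'>0$ such that $\sum_{|\alpha'|=|\beta'|=m-1} b_{\alpha',\beta'}\,\eta_{\alpha'}\eta_{\beta'} \geq C' \sum_{|\alpha'|=m-1}|\eta_{\alpha'}|^2$ for all real vectors $(\eta_{\alpha'})$. The natural approach is to feed into the ellipticity hypothesis \eqref{ellipticPartial} for $\mathscr{A}$ a test vector $(\xi_\alpha)_{|\alpha|=m}$ supported on those $\alpha$ with $\alpha_N>0$: given $(\eta_{\alpha'})_{|\alpha'|=m-1}$, set $\xi_\alpha := \eta_{\alpha - e_N}$ when $\alpha_N \geq 1$ and $\xi_\alpha := 0$ when $\alpha_N = 0$. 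Then $\sum_{|\alpha|=|\beta|=m} a_{\alpha,\beta}\xi_\alpha\xi_\beta$ equals exactly $\sum_{|\alpha'|=|\beta'|=m-1} b_{\alpha',\beta'}\eta_{\alpha'}\eta_{\beta'}$, while the right-hand side of \eqref{ellipticPartial} is $C\sum_{|\alpha|=m}|\xi_\alpha|^2 = C\sum_{|\alpha'|=m-1}|\eta_{\alpha'}|^2$. This gives the desired lower bound with $C' = C$.

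Finally I would double-check the bookkeeping: that the map $\alpha \mapsto \alpha' = \alpha - e_N$ is a bijection between $\{|\alpha|=m : \alpha_N \geq 1\}$ and $\{|\alpha'|=m-1\}$ (it is, with inverse $\alpha' \mapsto \alpha'+e_N$), so no pair is counted twice or omitted; that the commuting of $\partial^{\alpha'}$ with $\partial_N^2$ used above is valid since these are constant-coefficient operators; and that the cross-terms with $\alpha_N = 0, \beta_N > 0$ (or vice versa) are correctly assigned to $\mathscr{B}$ rather than $\mathscr{C}$, consistent with the stated decomposition. The only genuinely delicate step is the ellipticity transfer, and the zero-padding trick above handles it cleanly; everything else is index manipulation.
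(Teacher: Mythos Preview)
Your proof is correct and follows essentially the same route as the paper: factor out $-\partial_N^2$ via the bijection $\alpha\mapsto\alpha-e_N$ on $\{\alpha_N\geq 1\}$, then verify ellipticity of $\mathscr{D}$ by zero-padding an arbitrary $(\eta_{\alpha'})_{|\alpha'|=m-1}$ to a vector $(\xi_\alpha)_{|\alpha|=m}$ and invoking \eqref{ellipticPartial}. One small slip in your parenthetical remark about $\mathscr{B}$ (which is not part of the lemma itself): a term with $\alpha_N=0$ may still have $\beta_N=m$, so each factor can carry up to $m$ (not $m-1$) vertical derivatives, and the correct bound is at most $m$ vertical derivatives in total, not $2m-2$.
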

\begin{proof} By definition of $\mathscr{C}$  we can factorize by $\partial_N^2$ as follows:
$$ \mathscr{C}:= (-1)^m \sum_{\substack{|\alpha|=\vert \beta \vert = m \\ \alpha_N>0 \text{ and  } \beta_N>0}}a_{\alpha,\beta} \partial^\beta  \partial^\alpha=(-1)^{m-1} \sum_{|\alpha|=\vert \beta \vert = m-1 }a_{\alpha+e_N,\beta+e_N} \partial^\beta  \partial^\alpha(- \partial_N^2),$$
and therefore the operator $\mathscr{D}$ of the statement of the Lemma is given by 
$$\mathscr{D}:=(-1)^{m-1} \sum_{|\alpha|=\vert \beta \vert = m-1 }a_{\alpha+e_N,\beta+e_N} \partial^\beta  \partial^\alpha.$$
We have to check that this operator is elliptic, as an operator of order $2m-2$. For that purpose we will use the ellipticity of  $\mathscr{A}$ on suitably chosen vectors $\xi$.  More precisely, let  $\eta=(\eta_\alpha)_{|\alpha|=m-1}$ be any given vector indexed over all multi-indices of lenght $m-1$.   We complete the vector $\eta$  by defining a vector $\xi$ indexed by multi-indices of length $m$, by writing now    when $|\alpha|=m$, 
$$
\xi_\alpha:=
\left\{
\begin{array}{ll}
\eta_{\alpha-e_N} &\text{ if }  \alpha_N >0 \\
0 &\text{ if  } \alpha_N =0.
 \end{array}
 \right.
 $$
 By testing the ellipticity of $\mathscr{A}$ with this vector $\xi$ we get 
 \begin{eqnarray}
  \sum_{|\alpha|=\vert \beta \vert = m} a_{\alpha,\beta} \xi_\beta \xi_\alpha= \underbrace{\sum_{\substack{|\alpha|=\vert \beta \vert = m \\ \alpha_N=0 \text{ or } \beta_N=0}} a_{\alpha,\beta} \xi_\alpha \xi_{\beta}}_{ =0 } \; +\;   \sum_{\substack{|\alpha|=\vert \beta \vert = m \\ \alpha_N>0 \text{ and  } \beta_N>0}}  a_{\alpha,\beta} \xi_\alpha \xi_\beta \geq C  \sum_{\vert \alpha \vert =m} |\xi_\alpha|^2, \notag  
 \end{eqnarray}
 which directly gives 
$$  \sum_{ |\alpha|=\vert \beta \vert = m-1 }  a_{\alpha+e_N,\beta+e_N} \eta_{\alpha} \eta_{\beta} \geq C  \sum_{\vert \alpha \vert =m-1} |\eta_\alpha|^2,$$
and this proves that $\mathscr{D}$ is elliptic, as an operator of order $2m-2$.
 \end{proof}
 
\medskip

\subsection{Poincar\'e type inequalities} 
The following simple lemma will be needed in the proof of Theorem~\ref{theoremederegularite}. The proof is elementary, by integrating on vertical rays. 

\begin{lemma}\label{inegalitedeplacementmasse} For $r>0$ we denote by $Q_r\subset \R^N$ the cube centered at the origin of side $r$, i.e.,
$$Q_r:=\{x\in \R^N \; | \;  \forall 1\leq i \leq N, \; |x_i|\leq r\}.$$
For $\lambda \in (0,1)$ we also introduce 
 $$Q_r^\lambda := Q_r \cap \{x_N> \lambda r \}.$$ 
Assume that  $v \in L^2(Q_\lambda)$ for all $\lambda \in (0,1)$ and that furthermore $v$ satisfies 
$$\Vert \partial_N v \Vert_{L^2(Q_r)} <+\infty.$$
Then  $v \in L^2(Q_r)$ and for all $\lambda \in (0,3/4)$ it holds:    
\begin{eqnarray}
        \Vert v \Vert_{L^2(Q_r)} \leq 4\Vert v \Vert_{L^2(Q_r^\lambda)}+ 3r\Vert \partial_N v \Vert_{L^2(Q_r)}. \label{ammontre}
    \end{eqnarray} 
\end{lemma}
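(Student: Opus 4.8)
The inequality is a one-dimensional Poincaré-type estimate applied slicewise in the vertical direction, then integrated over the horizontal variables. I would fix $x' \in (-r,r)^{N-1}$ and consider the function $t \mapsto v(x',t)$ on the interval $(-r,r)$. The hypothesis $\|\partial_N v\|_{L^2(Q_r)} < +\infty$ together with $v \in L^2(Q_\lambda)$ for every $\lambda$ guarantees, via Fubini, that for almost every $x'$ the slice is in $L^2$ near the top (say on $(\lambda r, r)$ for a fixed $\lambda \in (0,3/4)$) and has an $L^2$ vertical derivative on all of $(-r,r)$; hence the slice is absolutely continuous on $(-r,r)$ after modification on a null set. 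The key pointwise bound is the fundamental theorem of calculus: for $s \in (-r,r)$ and $\tau \in (\lambda r, r)$,
\begin{align*}
v(x',s) = v(x',\tau) - \int_s^\tau \partial_N v(x',\sigma)\,\mathrm{d}\sigma.
\end{align*}

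\textbf{Main step.} Averaging the identity above over $\tau \in (\lambda r, r)$ (an interval of length $(1-\lambda)r \geq r/4$ since $\lambda < 3/4$), I get
\begin{align*}
|v(x',s)| \leq \frac{1}{(1-\lambda)r}\int_{\lambda r}^{r} |v(x',\tau)|\,\mathrm{d}\tau + \int_{-r}^{r} |\partial_N v(x',\sigma)|\,\mathrm{d}\sigma.
\end{align*}
Apply Cauchy--Schwarz to each term: the first is bounded by $\frac{1}{\sqrt{(1-\lambda)r}}\bigl(\int_{\lambda r}^r |v(x',\tau)|^2\,\mathrm{d}\tau\bigr)^{1/2}$ and the second by $\sqrt{2r}\bigl(\int_{-r}^r |\partial_N v(x',\sigma)|^2\,\mathrm{d}\sigma\bigr)^{1/2}$. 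Square (using $(a+b)^2 \leq 2a^2 + 2b^2$), integrate over $s \in (-r,r)$ — which multiplies by $2r$ — and then integrate over $x' \in (-r,r)^{N-1}$. This yields
\begin{align*}
\|v\|_{L^2(Q_r)}^2 \leq \frac{4r}{(1-\lambda)r}\|v\|_{L^2(Q_r^\lambda)}^2 + 8r^2 \|\partial_N v\|_{L^2(Q_r)}^2 \leq 16\|v\|_{L^2(Q_r^\lambda)}^2 + 8r^2\|\partial_N v\|_{L^2(Q_r)}^2,
\end{align*}
using $1-\lambda \geq 1/4$. Taking square roots and using $\sqrt{a^2+b^2} \leq a + b$ gives $\|v\|_{L^2(Q_r)} \leq 4\|v\|_{L^2(Q_r^\lambda)} + 3r\|\partial_N v\|_{L^2(Q_r)}$, since $2\sqrt 2 < 3$, which is exactly \eqref{ammontre}. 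This also establishes $v \in L^2(Q_r)$ a posteriori from the finiteness of the right-hand side.

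\textbf{Expected obstacle.} The only genuinely delicate point is the measure-theoretic justification: one must argue that $v$ (not a priori known to be in $L^2(Q_r)$, only in $L^2(Q_\lambda)$ for $\lambda<1$) admits an absolutely continuous representative on vertical rays for a.e. $x'$, so that the fundamental theorem of calculus applies. This follows from the standard characterization of $W^{1,1}_{\mathrm{loc}}$ functions via absolute continuity on lines, applied on each truncated cube $Q_\lambda$ where $v \in L^2$ and $\partial_N v \in L^2$; one then lets $\lambda \to 1$ and uses monotone convergence to pass the estimate to $Q_r$. Everything else is a routine Cauchy--Schwarz and Fubini computation, so I would keep that part brief.
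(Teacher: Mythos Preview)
Your proof is correct and follows essentially the same approach as the paper: average the fundamental theorem of calculus over $\tau\in(\lambda r,r)$, apply Cauchy--Schwarz, square, and integrate in $s$ and $x'$, arriving at the identical bound $\|v\|_{L^2(Q_r)}^2\leq \frac{4}{1-\lambda}\|v\|_{L^2(Q_r^\lambda)}^2+8r^2\|\partial_N v\|_{L^2(Q_r)}^2$. The only difference is in the measure-theoretic justification: the paper first proves the inequality for $v\in C^\infty(\overline{Q_r})$, then mollifies, extracts an a.e.\ convergent subsequence, and passes to the limit via Fatou's lemma, whereas you invoke absolute continuity on lines directly; both routes are standard and valid.
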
%

\begin{proof} We first assume that $v\in C^\infty(\overline{Q_r})$. We will use the notation $x=(x',x_N)$ for a point in $\R^N$. Then integrating along the $e_N$ direction  yields,  for every $(x',t)\in Q_r$,
$$v(x',t)=v(x',s)+\int_{s}^{t} \partial_N v(x',z) \;dz.$$
We then integrate  over $\lambda r\leq s \leq r$, 
$$(1-\lambda)r v(x',t)=\int_{\lambda r}^r v(x',s) ds+\int_{\lambda r}^r \int_{s}^{t} \partial_N v(x',z) \;dz\,ds.$$
Taking the square and using that $(a+b)^2\leq 2(a^2+b^2)$ it holds
$$(1-\lambda)^2r^2 v(x',t)^2\leq 2\left(\int_{\lambda r}^r v(x',s) ds\right)^2+2\left((1-\lambda)r \int_{-r}^{r} |\partial_N v(x',z)| \;dz\right)^{2},$$
which becomes, by Cauchy-Schwarz inequality,
\begin{eqnarray}
(1-\lambda)^2r^2 v(x',t)^2&\leq& 2(1-\lambda)r\int_{\lambda r}^r v(x',s)^2 ds+4(1-\lambda)^2r^3 \int_{-r}^{r} (\partial_N v(x',z))^2 \;dz, \notag 
\end{eqnarray}
or differently,
$$ (1-\lambda)r v(x',t)^2\leq  2\int_{\lambda r}^r v(x',s)^2 ds+4(1-\lambda)r^2 \int_{-r}^{r} (\partial_N v(x',z))^2 \;dz. $$
We then  integrate over $x' \in Q_{r}'$ (which is the cube in $\R^{N-1}$), arriving to
\begin{eqnarray}
(1-\lambda)r \int_{Q_r'}v(x',t)^2 \;dx'&\leq& 2 \int_{Q_r^\lambda} v(x)^2 dx+ 4(1-\lambda)r^2 \int_{Q_r} (\partial_N v(x))^2 \;dx.  \notag
\end{eqnarray}
A last integration over $-r\leq t\leq r$ finally yields
\begin{eqnarray}
(1-\lambda)r \int_{Q_r}v(x)^2 \;dx&\leq& 4r \int_{Q_r^\lambda} v(x)^2 dx+ 8(1-\lambda)r^3 \int_{Q_r} (\partial_N v(x))^2 \;dx,  \notag
\end{eqnarray}
or equivalently,
\begin{eqnarray}
 \int_{Q_r}v(x)^2 \;dx&\leq& \frac{4}{1-\lambda} \int_{Q_r^\lambda} v(x)^2 dx+ 8r^2 \int_{Q_r} (\partial_N v(x))^2 \;dx,  \notag
\end{eqnarray}
which proves the inequality \eqref{ammontre} in the case when $v \in C^\infty(\overline{Q_r})$, because $\lambda \leq 3/4$ by assumption.

Now let $v$ be as in the statement, i.e. $v \in L^2(Q_\lambda)$ for all $\lambda \in (0,1)$ and that furthermore $\Vert \partial_N v \Vert_{L^2(Q_r)} <+\infty$. We argue by approximation: for $\varepsilon>0$, we let $v_\varepsilon := v * \rho_\varepsilon$ where $\rho_\varepsilon$ is a standard mollifier. Then since $v \in L^2(Q_\lambda)$ for all $\lambda \in (0,1)$, we know that $v_\varepsilon \to v$ in $L^2(Q_\lambda)$ for all $\lambda \in (0,1)$ and by extracting a diagonal sequence we can assume that $v_\varepsilon$ converges to $v$ a.e. on $Q_r$. Then by applying \eqref{ammontre}  to $v_\varepsilon$, and passing to the limit $\varepsilon\to 0$ we  get from  Fatou Lemma that
$$
 \Vert v \Vert_{L^2(Q_r)} \leq \liminf_{\varepsilon \to 0} \|v_\varepsilon\|_{L^2(Q_r)} \leq  4\Vert v \Vert_{L^2(Q_r^\lambda)}+ 3r\Vert \partial_N v \Vert_{L^2(Q_r)},
$$
which ends the proof.
\end{proof}

We will also need the following standard version of the Poincar\'e inequality, that follows from instance from \cite[Corollary 4.5.2, page 195]{Zi}.

\begin{lemma}\label{poincare}  If $v \in H^m(B(0,1))$ is such that $v=0$ a.e. in $B(0,1)\cap \{x_N<0\}$, then
    \begin{align*}
    \|v\|_{H^m(B(0,1))} \leq  C \|\nabla^m v \|_{L^2(B(0,1))},
        \end{align*} 
        where $C>0$ depends only on the dimension $N$.
\end{lemma}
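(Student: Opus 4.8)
The plan is to argue by contradiction and compactness. Suppose the claimed inequality fails for this $m$. Then there exists a sequence $v_n\in H^m(B(0,1))$ with $v_n=0$ a.e. in $B(0,1)\cap\{x_N<0\}$, normalized so that $\|v_n\|_{H^m(B(0,1))}=1$, and such that $\|\nabla^m v_n\|_{L^2(B(0,1))}\to 0$. Since
$$\|v_n\|_{H^m(B(0,1))}^2=\|v_n\|_{H^{m-1}(B(0,1))}^2+\|\nabla^m v_n\|_{L^2(B(0,1))}^2,$$
this forces $\|v_n\|_{H^{m-1}(B(0,1))}\to 1$; in particular the $v_n$ do not go to $0$ in $H^{m-1}$.

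Next I would invoke the Rellich--Kondrachov theorem on the (smooth, bounded) domain $B(0,1)$: the embedding $H^m(B(0,1))\hookrightarrow H^{m-1}(B(0,1))$ is compact (and so is $H^m\hookrightarrow L^2$). Hence, after extracting a subsequence, $v_n\rightharpoonup v$ weakly in $H^m(B(0,1))$, $v_n\to v$ strongly in $H^{m-1}(B(0,1))$, and (along a further subsequence) $v_n\to v$ a.e. in $B(0,1)$. Passing to the limit in the three pieces of information: the a.e. convergence gives $v=0$ a.e. in $B(0,1)\cap\{x_N<0\}$; the strong $H^{m-1}$ convergence gives $\|v\|_{H^{m-1}(B(0,1))}=1$, so $v\not\equiv 0$; and since $\nabla^m v_n\to 0$ strongly in $L^2$ while $\nabla^m v_n\rightharpoonup \nabla^m v$ weakly in $L^2$, we obtain $\nabla^m v=0$, i.e. every distributional derivative of $v$ of order $m$ vanishes on the connected open set $B(0,1)$. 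Consequently $v$ agrees a.e. with a polynomial of degree at most $m-1$; but such a polynomial, vanishing on the nonempty open set $B(0,1)\cap\{x_N<0\}$, must be identically zero, contradicting $v\not\equiv 0$. This yields the inequality, with $C$ depending only on $N$ (the dependence on $m$ being harmless since $m$ is fixed throughout).

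The only mildly nonelementary ingredients are the Rellich compactness and the statement ``$\nabla^m v=0$ on a connected open set implies $v$ is a polynomial of degree $\le m-1$''; the latter follows by mollifying $v$ (as in the proof of Lemma~\ref{inegalitedeplacementmasse}), applying the classical smooth fact, and passing to the limit. The genuine point to be careful about is that the constraint $v_n=0$ a.e. on the half-ball survives in the limit, which is exactly why I pass to an a.e.-convergent subsequence rather than relying only on weak convergence. Alternatively, one can avoid compactness entirely: first prove the case $m=1$ by integrating $v(x',x_N)=\int_0^{x_N}\partial_N v(x',s)\,ds$ along vertical chords of the ball (using $v=0$ below $\{x_N=0\}$, after a mollification), which gives $\|v\|_{L^2(B(0,1))}\le C\|\nabla v\|_{L^2(B(0,1))}$; then, noting that $v\in H^m(B(0,1))$ vanishing a.e. on the open half-ball implies $\partial^\alpha v$ vanishes there too for every $|\alpha|\le m-1$, apply the case $m=1$ to each $\partial^\alpha v\in H^1(B(0,1))$ to get $\|\nabla^k v\|_{L^2}\le C\|\nabla^{k+1}v\|_{L^2}$ for $0\le k\le m-1$, and chain these bounds.
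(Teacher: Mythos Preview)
Your proof is correct. The contradiction argument via Rellich--Kondrachov is clean and complete: the key point that the limit $v$ is a polynomial of degree $\le m-1$ vanishing on an open set, hence identically zero, is exactly what forces the contradiction. The alternative direct argument you sketch (case $m=1$ by integration along vertical chords, then chain to higher $m$ using that $\partial^\alpha v$ inherits the vanishing on the lower half-ball) is also valid and has the advantage of yielding an explicit constant.

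As for comparison: the paper does not give a proof at all---it simply records the lemma as a ``standard version of the Poincar\'e inequality'' and cites \cite[Corollary 4.5.2, page 195]{Zi}. So your write-up is strictly more informative than the paper's treatment. If anything, your direct approach is closest in spirit to the kind of argument in Ziemer (which goes through a more general capacity/measure-theoretic Poincar\'e inequality for functions vanishing on a set of positive capacity), whereas your compactness argument is the textbook route that trades constructivity for brevity. One small remark: the statement in the paper says $C$ depends only on $N$, but of course it also depends on $m$; you correctly flag this and it is harmless here since $m$ is fixed.
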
%


\subsection{Finite difference tools} 

\label{discreate}



We will prove the theorem using Nirenberg's ``translation method." A good reference for this section is for instance Section 5.8.2. of \cite{Evans} but for the convenience of the reader, and since our statement are not exactly the same, we write the proofs with all details. For $h \in \R^N$,  and $\varphi : \R^N \to \R$, we denote 
$$\tau_{h}(\varphi)(x)=\varphi(x+h)$$
and then
$$D_h(\varphi) = \frac{\varphi(x+h) - \varphi(x)}{|h|} = \frac{\tau_{h}(\varphi)(x) - \varphi(x)}{|h|}.$$

The main ingredient is that a uniform $L^2$-control on $D_h(\varphi)$ (with respect to $h$),  leads to the conclusion that $\nabla \varphi \in L^2$, as stated in the following proposition. 
\begin{proposition} \label{Dh}
Let $\omega \subset \Omega \subset \R^N$ be two open sets such that  $\overline{\omega} \subset \Omega$ and let $u \in L^2(\Omega)$.  Then the following two properties holds:
\begin{enumerate}
\item  If $u \in H^1(\Omega)$ then for every $|\varepsilon | < \text{dist}(\omega, \Omega^c)$,
$$\|D_{\varepsilon e_i}(u)\|_{L^2(\omega)} \leq \| \partial_i u\|_{L^2(\Omega)}.$$
\item Conversely, if  $u \in L^2(\Omega)$ and  if there exists $C_0 > 0$ such that all $|\varepsilon | < \text{dist}(\omega, \Omega^c)$, it holds $\|D_{\varepsilon e_i}(u)\|_{L^2(\omega)} \leq C_0,$ then $\partial_i u \in L^2(\omega)$ and we have
\begin{eqnarray}
 \| \partial_i u\|_{L^2(\omega)} \leq C_0. \label{estimateGrad}
\end{eqnarray}
\end{enumerate}
\end{proposition}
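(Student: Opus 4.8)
The plan is to prove the two statements separately, with the first being essentially a one-line consequence of the fundamental theorem of calculus and Jensen (or Minkowski's integral inequality), and the second relying on weak compactness in $L^2$.

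For item (1), I would first establish the estimate for $u\in C^\infty(\Omega)\cap H^1(\Omega)$ and then pass to the general case by density. For smooth $u$, write
$$D_{\varepsilon e_i}(u)(x) = \frac{u(x+\varepsilon e_i)-u(x)}{\varepsilon} = \frac{1}{\varepsilon}\int_0^\varepsilon \partial_i u(x+t e_i)\,dt = \int_0^1 \partial_i u(x+s\varepsilon e_i)\,ds,$$
so by Jensen's inequality $|D_{\varepsilon e_i}(u)(x)|^2 \le \int_0^1 |\partial_i u(x+s\varepsilon e_i)|^2\,ds$. Then integrate over $x\in\omega$ and use Fubini: for each fixed $s\in[0,1]$, the map $x\mapsto x+s\varepsilon e_i$ is a translation by a vector of length $|s\varepsilon|<\mathrm{dist}(\omega,\Omega^c)$, so $x+s\varepsilon e_i$ ranges over a subset of $\Omega$, giving $\int_\omega |\partial_i u(x+s\varepsilon e_i)|^2\,dx \le \|\partial_i u\|_{L^2(\Omega)}^2$. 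Integrating in $s$ keeps the same bound. For general $u\in H^1(\Omega)$, approximate by smooth functions on a slightly smaller neighborhood of $\overline\omega$ (or mollify), apply the smooth estimate, and pass to the limit; both sides converge appropriately in $L^2$.

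For item (2), the hypothesis gives a family $\{D_{\varepsilon e_i}(u)\}$ bounded in $L^2(\omega)$ by $C_0$ uniformly in $\varepsilon$. Pick a sequence $\varepsilon_n\to 0$; by weak compactness of bounded sets in the Hilbert space $L^2(\omega)$, extract a subsequence with $D_{\varepsilon_n e_i}(u) \rightharpoonup v$ weakly in $L^2(\omega)$ for some $v\in L^2(\omega)$ with $\|v\|_{L^2(\omega)}\le\liminf \|D_{\varepsilon_n e_i}(u)\|_{L^2(\omega)} \le C_0$ by weak lower semicontinuity of the norm. It remains to identify $v = \partial_i u$ in the distributional sense on $\omega$. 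For this, take any $\varphi\in C_c^\infty(\omega)$; for $n$ large enough the support of $\varphi$ stays at positive distance from $\Omega^c$, so the discrete integration-by-parts identity $\int_\omega D_{\varepsilon_n e_i}(u)\,\varphi\,dx = -\int u\, D_{-\varepsilon_n e_i}(\varphi)\,dx$ is valid. As $n\to\infty$, the left side tends to $\int_\omega v\varphi\,dx$ by weak convergence, and the right side tends to $-\int_\omega u\,\partial_i\varphi\,dx$ since $D_{-\varepsilon_n e_i}(\varphi)\to \partial_i\varphi$ uniformly (and with supports in a fixed compact set) for smooth $\varphi$. Hence $\int_\omega v\varphi = -\int_\omega u\,\partial_i\varphi$ for all test $\varphi$, which means $\partial_i u = v \in L^2(\omega)$ with $\|\partial_i u\|_{L^2(\omega)}\le C_0$, i.e. \eqref{estimateGrad}.

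The only mildly delicate point is bookkeeping the domains: in (1) one must check that translating $\omega$ by a vector shorter than $\mathrm{dist}(\omega,\Omega^c)$ keeps it inside $\Omega$ so the right-hand norm controls things, and in (2) one must ensure the test function's support is eventually far enough from $\partial\Omega$ for the discrete integration by parts to hold — both are immediate from the distance hypothesis. Everything else is a routine application of Jensen/Fubini and weak $L^2$ compactness, so I do not expect any real obstacle.
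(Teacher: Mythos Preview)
Your proposal is correct and follows essentially the same approach as the paper: for (1) the paper also reduces to smooth $u$, writes the difference quotient via the fundamental theorem of calculus, applies Cauchy--Schwarz (your Jensen) and Fubini with the translation argument, then passes to general $u$ by density; for (2) the paper likewise uses weak $L^2$-compactness, the discrete integration-by-parts identity, and the convergence $D_{-\varepsilon e_i}\varphi\to\partial_i\varphi$ to identify the weak limit as $\partial_i u$.
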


\begin{proof}
We start by assuming that $u \in C^\infty(\Omega) \cap H^1(\Omega)$. Let $\varepsilon $ be such that $|\varepsilon| \leq \text{dist}(\omega, \Omega^c)$. We write
$$u(x+\varepsilon e_i) - u(x) = \int_{0}^1 \partial_i u(x+t \varepsilon e_i)  \, dt.$$
Using the Cauchy-Schwarz inequality and Fubini's theorem, it follows that
$$\int_{\omega} |u(x+\varepsilon e_i) - u(x)|^2 \, dx \leq \varepsilon^2 \int_0^1 \int_{\omega} |\partial_i u(x+t\varepsilon e_i)|^2 \, dx \, dt.$$
By the change of variables $y = x+t \varepsilon e_i$, we find
$$\int_{\omega} |u(x+\varepsilon e_i) - u(x)|^2 \, dx \leq |\varepsilon|^2 \int_0^1 \int_{\omega+t \varepsilon e_i} |\partial_i u(y)|^2 \, dy \, dt.$$
If $|\varepsilon| \leq \text{dist}(\omega, \Omega^c)$, then $\omega+t \varepsilon e_i \subset \Omega$, so finally
$$\|D_{\varepsilon e_i}(u)\|_{L^2(\omega)} \leq \| \partial_i u\|_{L^2(\Omega)}.$$
Now if $u \in H^1(\Omega)$, the result follows by approximation with $C^\infty$ functions, which concludes the proof of the first assertion.

Let us now prove the converse. We take   $0 < \varepsilon < \text{dist}(\omega, \Omega^c)$ and we define $g_\varepsilon := D_{\varepsilon e_i} u$, so that the hypothesis shows that the family 
$(g_\varepsilon)_{\varepsilon > 0}$ is bounded in $L^2(\omega)$. Therefore, there exists a subsequence such that $g_\varepsilon \to g^{(i)}$ weakly in $L^2(\omega)$. In particular
$$\|g^{(i)}\|_{L^2(\omega)} \leq \liminf \|D_{\varepsilon e_i} u\|_{L^2(\omega)} \leq C.$$

Let $\varphi \in C^\infty_c(\omega)$. For $\varepsilon < \text{dist}(\omega, \Omega^c)$, we have
\begin{eqnarray}
\int_{\Omega} (D_{\varepsilon e_i} u(x)) \varphi(x) dx &=& \int_{\Omega} \frac{u(x+\varepsilon e_i) - u(x)}{\varepsilon} \varphi(x) dx \notag \\
&=& -\int_{\Omega} u(y) \frac{\varphi(y) - \varphi(y-\varepsilon e_i)}{\varepsilon} dy \notag \\
&=& -\int_{\Omega} u(y) D_{-\varepsilon e_i} \varphi(y) dy. \notag
\end{eqnarray}
Moreover, since $D_{-\varepsilon e_i} \varphi(y) \to \partial_i \varphi(y)$ for all $y$, by the dominated convergence theorem, we deduce that
$$\int_{\omega} g^{(i)} \varphi dx = -\int_{\omega} u \partial_i \varphi dx,$$
which shows that $\partial_i u \in L^2(\omega)$ with $\partial_i u = g^{(i)}$ and 
$$\int_{\omega} |\partial_i u|^2 dx \leq C_0,$$
where $C_0 > 0$ is the constant from the statement. 
\end{proof}

By iterating the above proposition, we can prove the same for higher order derivatives. More precisely, we consider a vector of direction $(h_i)_{1\leq i\leq N}$ with $h_i \in \R^N$  and the associated  discretized operator $D_{h_{i}}$ defined by
    \begin{align*}
    D_{h_{i}}\varphi:= \frac{\tau_{h_{i}}\varphi -\varphi}{ \vert h_i \vert}.
    \end{align*}
    These operators commute two by two ; for every $i,j \in \llbracket 1,N \rrbracket$, $\vert h_i \vert = \vert h_j \vert$, one has
    \begin{align*}
      D_{h_{i}} D_{h_{j}}\varphi = \frac{1}{\vert h_i \vert}\tau_{h_{i}}\left(\frac{\tau_{h_{j}}\varphi -\varphi}{ \vert h_j \vert} \right)- \frac{\tau_{h_{j}}\varphi -\varphi}{ \vert h_i \vert \vert h_j \vert} = \frac{\tau_{h_{i}}\tau_{h_{j}}\varphi - \tau_{h_{i}}\varphi -\tau_{h_{j}}\varphi - \varphi}{\vert h_i \vert^2},
    \end{align*}
   but $\tau_{h_{i}}\tau_{h_{j}} = \tau_{h_{j}}\tau_{h_{i}}$, thus $   D_{h_{i}} D_{h_{j}} =  D_{h_{j}}  D_{h_{i}}$.
    Futhermore, they commute with the operators $\partial^\beta$, namely $D_{h_{i}}\partial^\beta \varphi = \partial^\beta D_{h_{i}}\varphi$. 
    
Then for  $\alpha \in \N^N$  and $\varepsilon \in \R$ we define
    \begin{align}
   D_{\varepsilon }^\alpha := \prod_{\substack{i=1\\ \text{ s.t. }\alpha_i\not = 0}}^{N} \prod_{l=1}^{\vert \alpha_i \vert}  D_{\varepsilon  e_i}. \label{defDalphah}
    \end{align}

   Notice that by a simple change of variables,     for every $u, v\in L^2(\R^N)$ we have 
    \begin{align*}
        \int_{\R^N}  u D_{\varepsilon}v\;dx = -\int_{\R^N} vD_{-\varepsilon}u\;dx,
    \end{align*}
and iterating this property we obtain
\begin{align*}
       \int_{\R^N} u D_{\varepsilon}^\alpha v\;dx =  - \int_{\R^N} v D_{-\varepsilon}^\alpha u\;dx.
\end{align*}


By applying $m$-times Proposition \ref{Dh}, we arrive to the  following direct corollary.

\begin{proposition} \label{Dhh}
Let $\omega \subset \Omega \subset \R^N$ be two open sets such that  $\overline{\omega} \subset \Omega$ and let $u \in L^2(\Omega)$.  Then the following two properties holds:
\begin{enumerate}
\item  If $u \in H^m(\Omega)$ then for every $|\varepsilon | < \text{dist}(\omega, \Omega^c)/m$ and $|\alpha|=m$,
$$\|D_\varepsilon^\alpha(u)\|_{L^2(\omega)} \leq \| \partial^\alpha u\|_{L^2(\Omega)}.$$
\item Conversely, if  $u \in L^2(\Omega)$, and if  there exists $\varepsilon_0,C_0 > 0$ such that for  $|\alpha|=m$, and for all $|\varepsilon| \leq \varepsilon_0<\text{dist}(\omega, \Omega^c)/m$, one has 
$$\|D_\varepsilon^\alpha(u)\|_{L^2(\omega)} \leq C_0,$$
 then $\partial^\alpha u \in L^2(\omega)$ and we have
\begin{eqnarray}
 \| \partial^\alpha u\|_{L^2(\omega)} \leq C_0. \label{estimateGrad2}
\end{eqnarray}
\end{enumerate}
\end{proposition}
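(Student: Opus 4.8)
The plan is to derive Proposition~\ref{Dhh} directly from Proposition~\ref{Dh} by iterating it $m$ times, carefully tracking the domains on which the estimates live.

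\textbf{Proof of the first assertion.} Suppose $u\in H^m(\Omega)$ and fix a multi-index $\alpha$ with $|\alpha|=m$. Write $D^\alpha_\varepsilon = D_{\varepsilon e_{i_1}}\circ\cdots\circ D_{\varepsilon e_{i_m}}$ where $(i_1,\dots,i_m)$ is the list of (not necessarily distinct) coordinate directions determined by $\alpha$, each direction $i$ appearing $\alpha_i$ times. I would introduce the nested open sets $\omega = \omega_0 \Subset \omega_1 \Subset \cdots \Subset \omega_{m-1} \Subset \omega_m = \Omega$ obtained by thickening $\omega$ by steps of size $\mathrm{dist}(\omega,\Omega^c)/m$; more precisely $\omega_k := \{x : \mathrm{dist}(x,\omega) < k\,\mathrm{dist}(\omega,\Omega^c)/m\}$, so that $\mathrm{dist}(\omega_{k-1},\omega_k^c) \ge \mathrm{dist}(\omega,\Omega^c)/m > |\varepsilon|$ for every $k$. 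Since the operators $D_{\varepsilon e_i}$ commute with each $\partial^\beta$, we have $D^\alpha_\varepsilon u = D_{\varepsilon e_{i_1}}(\partial^{\alpha-e_{i_1}}$-type quantities$)$; rather than commuting through derivatives, the cleanest route is: apply part (1) of Proposition~\ref{Dh} with the pair $\omega_{m-1}\subset\omega_m=\Omega$ to the function $u$ and direction $e_{i_1}$, giving $\|D_{\varepsilon e_{i_1}} u\|_{L^2(\omega_{m-1})} \le \|\partial_{i_1} u\|_{L^2(\Omega)}$. Then apply part (1) again with the pair $\omega_{m-2}\subset\omega_{m-1}$ to the function $\partial_{i_1}u \in H^{m-1}(\Omega) \subset H^1(\omega_{m-1})$ in direction $e_{i_2}$, and use $D_{\varepsilon e_{i_2}}\partial_{i_1}u = \partial_{i_1}D_{\varepsilon e_{i_2}}u$ together with the already-available bound to estimate $\|D_{\varepsilon e_{i_2}}D_{\varepsilon e_{i_1}} u\|_{L^2(\omega_{m-2})}$. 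Iterating $m$ times yields
\begin{align*}
\|D^\alpha_\varepsilon u\|_{L^2(\omega)} \le \|\partial^\alpha u\|_{L^2(\Omega)},
\end{align*}
valid whenever $|\varepsilon| < \mathrm{dist}(\omega,\Omega^c)/m$ (the factor $m$ coming from the $m$ successive thickenings). The bookkeeping of which intermediate function lies in which Sobolev space on which $\omega_k$ is the one place requiring a little care, but it is routine.

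\textbf{Proof of the converse.} Now assume $u\in L^2(\Omega)$ and $\|D^\alpha_\varepsilon u\|_{L^2(\omega)}\le C_0$ for all $|\varepsilon|\le\varepsilon_0<\mathrm{dist}(\omega,\Omega^c)/m$, with $|\alpha|=m$, say $\alpha = e_{i_1}+\cdots+e_{i_m}$ (with repetitions). Write $D^\alpha_\varepsilon u = D_{\varepsilon e_{i_1}}\bigl(D_{\varepsilon e_{i_2}}\cdots D_{\varepsilon e_{i_m}} u\bigr)$ and set $v_\varepsilon := D_{\varepsilon e_{i_2}}\cdots D_{\varepsilon e_{i_m}} u$. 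By part (2) of Proposition~\ref{Dh} applied to $v_\varepsilon$ (for each fixed $\varepsilon$) on a slightly thickened set, together with the uniform bound on $D_{\varepsilon e_{i_1}} v_\varepsilon$, one extracts that $\partial_{i_1} v_\varepsilon$ exists in $L^2$ with a uniform bound — but the cleaner induction is the reverse: argue by downward induction on the number of discrete difference operators applied. Concretely, first note the hypothesis gives (via part (2) of Proposition~\ref{Dh}, in direction $e_{i_1}$, after recognizing $D^\alpha_\varepsilon u = D_{\varepsilon e_{i_1}} w_\varepsilon$ with $w_\varepsilon := D_{\varepsilon e_{i_2}}\cdots D_{\varepsilon e_{i_m}} u$) that $\partial_{i_1} w_\varepsilon$ is bounded in $L^2$ uniformly in $\varepsilon$; passing to a weak limit along $\varepsilon\to0$ and identifying the limit of $w_\varepsilon$ with $D_{\varepsilon'}$-type quantities one step shorter, one descends the chain. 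After $m$ steps all discrete operators have been converted into genuine partial derivatives and one obtains $\partial^\alpha u\in L^2(\omega)$ with $\|\partial^\alpha u\|_{L^2(\omega)}\le C_0$, using lower semicontinuity of the norm under weak convergence at each step, exactly as in the scalar case of Proposition~\ref{Dh}.

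\textbf{Main obstacle.} The only genuine subtlety — and the step I expect to be fiddliest — is organizing the iteration in the converse direction: at each stage one is taking a weak limit as $\varepsilon\to0$ of a family that \emph{itself} still contains $\varepsilon$-difference quotients in the other directions, so one must either diagonalize over $\varepsilon$ or, preferably, first identify the weak limit of $w_\varepsilon$ as $\varepsilon\to0$ with the correctly-ordered composition of $m-1$ genuine derivatives applied in the distributional sense and only then invoke part (2) of Proposition~\ref{Dh} one more time. Because all the $D_{\varepsilon e_i}$ commute with each other and with $\partial^\beta$, and because part (2) of Proposition~\ref{Dh} already handles the single-direction passage to the limit, this can be made to work with no new ideas; the statement is indeed, as the text says, a \emph{direct corollary}. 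All constants remain $C_0$ (resp.\ $\|\partial^\alpha u\|_{L^2(\Omega)}$) with no multiplicative loss, and the only place $m$ enters is the shrinkage of the admissible range of $|\varepsilon|$ by the factor $m$.
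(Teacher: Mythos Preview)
For part (1) your argument is the paper's: both peel off one $D_{\varepsilon e_i}$ at a time over a chain of intermediate domains, using commutation with $\partial^\beta$. The paper packages this as an induction on $|\alpha|$ with a single intermediate set $\omega'$ per step, which is your explicit chain $\omega_0\Subset\cdots\Subset\omega_m$ unrolled.

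For part (2) the paper takes a tidier route than your repeated weak-limit descent: it writes $D_\varepsilon^\alpha = D_{\varepsilon e_i} D_\varepsilon^\beta$, applies the induction hypothesis (the order-$m$ case of part (2)) to the function $D_{\varepsilon e_i} u$ to obtain $\|\partial^\beta(D_{\varepsilon e_i} u)\|_{L^2(\omega)}\le C_0$, commutes to $\|D_{\varepsilon e_i}(\partial^\beta u)\|_{L^2(\omega)}\le C_0$ for all small $\varepsilon$, and finishes with one more application of Proposition~\ref{Dh}. This avoids extracting a weak limit at each of the $m$ stages. That said, the ``main obstacle'' you flag is genuine and is glossed over in the paper's argument as well: invoking the induction hypothesis on $D_{\varepsilon e_i} u$ with $\varepsilon$ \emph{fixed} would require $\|D_{\varepsilon'}^\beta(D_{\varepsilon e_i} u)\|\le C_0$ for all small $\varepsilon'$, whereas the stated hypothesis only supplies $\varepsilon'=\varepsilon$. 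The cleanest fix --- which resolves your obstacle with no diagonalization --- is to drop induction for part (2) and repeat the weak-limit argument of Proposition~\ref{Dh} directly at order $m$: extract a weak $L^2(\omega)$-limit $g$ of $D_\varepsilon^\alpha u$ along a subsequence, use the discrete integration by parts $\int (D_\varepsilon^\alpha u)\,\varphi = (-1)^{|\alpha|}\int u\,D_{-\varepsilon}^\alpha\varphi$ for $\varphi\in C_c^\infty(\omega)$, and let $\varepsilon\to 0$ using $D_{-\varepsilon}^\alpha\varphi\to\partial^\alpha\varphi$ uniformly. This identifies $g=\partial^\alpha u$ with $\|g\|_{L^2(\omega)}\le C_0$ in one step.
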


\begin{proof} We argue by induction: if $|\alpha|=1$ then the statement is exactly Proposition~\ref{Dh}.

So we assume that the statement is true for all $|\alpha|=m$ and we consider some $\alpha$ such that $|\alpha|=m+1$, and $\varepsilon< \text{dist}(\omega, \Omega^c)/(m+1)$. Then we can write 
$$D_\varepsilon^\alpha=D_{\varepsilon e_i} D_\varepsilon^\beta,$$
with  $|\beta|=m$ and for some $e_i$ arbitrary chosen.

For the first part of the statement, we assume that $u\in H^{m+1}(\Omega)$. Let $\omega'$ be an open set such that 
$\omega \subset \omega' \subset \Omega$, and moreover such that 
$$|\varepsilon|\leq  \text{dist}(\omega, (\omega')^c)/m \text{ and } |\varepsilon|\leq  \text{dist}(\omega', \Omega^c).$$
This is possible because $|\varepsilon | < \text{dist}(\omega, \Omega^c)/(m+1)$. Then the induction hypothesis says that  
$$ \| D_\varepsilon^\alpha(u)\|_{L^2(\omega)}=  \| D_\varepsilon^\beta( D_{\varepsilon e_i} u)\|_{L^2(\omega)} \leq \| \partial^\beta D_{\varepsilon e_i} u \|_{L^2(\omega')}= \| D_{\varepsilon e_i} \partial^\beta  u \|_{L^2(\omega')}.$$
But since $\partial^\beta  u \in H^1(\Omega)$, we can apply Proposition~\ref{Dh} which says that 
$$\| D_{\varepsilon e_i} \partial^\beta  u \|_{L^2(\omega')}\leq \| \partial_i \partial^\beta  u \|_{L^2(\Omega)}=\|   \partial^\alpha  u \|_{L^2(\Omega)},$$
which  proves the first part of the proposition.

Now for the second part, we assume that $u \in L^2(\Omega)$, and  there exists $C_0 > 0$ such that for every $|\alpha|=m+1$, and for all $|\varepsilon| \leq \varepsilon_0\leq  \text{dist}(\omega, \Omega^c)/m$, one has 
$$\|D_\varepsilon^\alpha(u)\|_{L^2(\omega)} \leq C_0.$$
Then again we write   
$$D_\varepsilon^\alpha=D_{\varepsilon e_i} D_\varepsilon^\beta,$$
with  $|\beta|=m$ and for some $e_i$ arbitrary chosen.  By applying the induction hypothesis we know that $\partial^\beta (D_{\varepsilon e_i}u)\in L^2(\omega)$ and 
$$ \| \partial^\beta (D_{\varepsilon e_i}u)\|_{L^2(\omega)} \leq C_0. $$
This actually yields,
$$ \|D_{\varepsilon e_i}  \partial^\beta u\|_{L^2(\omega)} \leq C_0,$$
so a last application of Proposition~\ref{Dh} finally gives 
$$  \|  \partial^\alpha u\|_{L^2(\omega)}=\|\partial_i \partial^\beta u\|_{L^2(\omega)} \leq C_0,$$
and so follows  the proposition.
\end{proof}

\subsection{Campanato spaces}

We recall here some standard facts about Campanato spaces that will be used to achieve our $C^{m-1,\alpha}$ regularity result.

\begin{definition}
   Let $\Omega$ be an open set of $\R^N$. The Campanato space of parameter $\lambda\in \R^+$ is 
    \begin{align*}
        \mathcal{L}^{2, \lambda}(\Omega) := \left\{ u\in L^p(\Omega) \; \middle| \; [u]^2_{\lambda} := \sup_{\substack{x \in \Omega \\ 0 < r < r_0}} r^{-\lambda}\int_{\Omega \cap B(x,r)}\vert u -    m(u;x,r) \vert^2 dx < \infty \right\},
    \end{align*}
    where
    \begin{align*}
        m(u;x,r) := \frac{1}{\vert \Omega \cap B(x,r) \vert}\int_{\Omega \cap B(x,r)}u \; dx.
    \end{align*}
\end{definition}

  \begin{proposition}
    Let $\Omega$ be an $(\varepsilon_0,r_0)-$Reifenberg flat domain in $\R^N$. If $\lambda \in( N, N+2]$, then there exists a continuous injection
    \begin{align}\label{campanato}
        \mathcal{L}^{2,\lambda}(\Omega) \xhookrightarrow{\quad } \mathscr{C}^{0,\alpha}(\overline{\Omega}),
    \end{align}
    with $ \alpha :=(\lambda - N )/p$.
  \end{proposition}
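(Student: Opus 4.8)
The plan is to prove the Campanato embedding $\mathcal{L}^{2,\lambda}(\Omega)\hookrightarrow \mathscr{C}^{0,\alpha}(\overline{\Omega})$ for $\lambda\in(N,N+2]$ with $\alpha=(\lambda-N)/2$ (note: the exponent $p$ in the statement should read $2$), exploiting only the fact that a Reifenberg-flat domain satisfies a measure lower bound $|\Omega\cap B(x,r)|\geq c_0 r^N$ for all $x\in\overline{\Omega}$ and $r\leq r_0$, together with the (weaker) fact that it is quasiconvex — any two points can be joined by a curve inside $\overline\Omega$ of length comparable to their distance — or at least that it is a uniform/John-type domain, which Reifenberg-flatness with small $\varepsilon_0$ guarantees. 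I would state these geometric facts first, quoting the corkscrew condition already invoked in \eqref{netrusov}, and then run the classical Campanato argument.

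\textbf{Step 1: control of the oscillation of averages across scales.} For a fixed $x\in\overline{\Omega}$ and radii $r<R\leq r_0$, I would estimate $|m(u;x,r)-m(u;x,2r)|$ by writing the difference as an average over $\Omega\cap B(x,r)$ of $(u-m(u;x,2r))$, applying Cauchy--Schwarz and the measure lower bound $|\Omega\cap B(x,r)|\geq c_0 r^N$, to get
\begin{align*}
|m(u;x,r)-m(u;x,2r)|^2 \leq \frac{1}{|\Omega\cap B(x,r)|}\int_{\Omega\cap B(x,2r)}|u-m(u;x,2r)|^2\,dx \leq \frac{[u]_\lambda^2}{c_0}\,(2r)^\lambda r^{-N}.
\end{align*}
Summing the telescoping series over dyadic radii $r=2^{-k}R$ and using $\lambda>N$ shows that $m(u;x,r)$ is Cauchy as $r\to0$; denote its limit $\tilde u(x)$. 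This also yields $|m(u;x,r)-\tilde u(x)|\leq C[u]_\lambda r^{(\lambda-N)/2}$, and by Lebesgue differentiation $\tilde u=u$ a.e., so $u$ has a representative defined everywhere on $\overline\Omega$ with this modulus-of-continuity-type bound relative to its averages.

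\textbf{Step 2: Hölder estimate between two points.} Given $x,y\in\overline\Omega$ with $\rho:=|x-y|\leq r_0/4$, I would compare $\tilde u(x)$ and $\tilde u(y)$ through the common ball $B(z,2\rho)$ for a suitable $z$ (e.g.\ $z=x$, using $B(y,\rho)\subset B(x,2\rho)$). Writing
\begin{align*}
|\tilde u(x)-\tilde u(y)| \leq |\tilde u(x)-m(u;x,2\rho)| + |m(u;x,2\rho)-m(u;y,\rho)| + |m(u;y,\rho)-\tilde u(y)|,
\end{align*}
the outer two terms are $\leq C[u]_\lambda\rho^{(\lambda-N)/2}$ by Step 1, and the middle term is bounded, as in Step 1, by $\frac{1}{|\Omega\cap B(y,\rho)|}\int_{\Omega\cap B(x,2\rho)}|u-m(u;x,2\rho)|^2$ raised to power $1/2$, which is again $\leq C[u]_\lambda\rho^{(\lambda-N)/2}$ using $|\Omega\cap B(y,\rho)|\geq c_0\rho^N$. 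Hence $|\tilde u(x)-\tilde u(y)|\leq C[u]_\lambda|x-y|^{\alpha}$ with $\alpha=(\lambda-N)/2$. For points with $|x-y|>r_0/4$ one controls $|\tilde u(x)-\tilde u(y)|$ crudely by $|\tilde u(x)-m(u;x,r_0)|+|\tilde u(y)-m(u;y,r_0)|+|m(u;x,r_0)-m(u;y,r_0)|$, the last term being bounded by $C\|u\|_{L^2}$ up to constants depending on $\Omega$; combined with $|x-y|^\alpha\gtrsim r_0^\alpha$ this still gives the Hölder bound. Finally $\|\tilde u\|_{L^\infty}\leq |m(u;x_0,r_0)|+C[u]_\lambda r_0^\alpha \leq C(\|u\|_{L^2(\Omega)}+[u]_\lambda)$, so $\|\tilde u\|_{\mathscr{C}^{0,\alpha}(\overline\Omega)}\leq C\|u\|_{\mathcal{L}^{2,\lambda}(\Omega)}$ and the injection is continuous.

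\textbf{Main obstacle.} The only genuinely nonroutine point is the geometric input: the telescoping and two-point arguments both need the Ahlfors-type lower bound $|\Omega\cap B(x,r)|\geq c_0 r^N$ uniformly for $x\in\overline\Omega$, $r\le r_0$, and the two-point step implicitly needs that $\Omega\cap B(x,2\rho)$ is connected-ish enough (or simply large enough) that comparing averages over $B(x,2\rho)$ and $B(y,\rho)$ is legitimate — for a domain that may be far from convex one normally needs a chain of balls argument. For Reifenberg-flat domains with $\varepsilon_0$ small this is fine because the boundary is $\varepsilon_0 r$-close to a hyperplane at every scale, so $\Omega\cap B(x,2\rho)$ contains a fixed fraction of the ball and the corkscrew/quasiconvexity property holds; I would simply cite these facts (they are standard for Reifenberg-flat domains and the corkscrew property is already used for \eqref{netrusov}) rather than reprove them. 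Everything else is the textbook Campanato computation and carries over verbatim.
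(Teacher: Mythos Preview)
Your proof is correct and reproduces the classical Campanato argument under a measure density condition. The paper does not actually give its own proof of this proposition: it simply cites \cite[Theorem~3.1]{Gi93}, which establishes the embedding for any bounded domain satisfying $|\Omega\cap B(x,r)|\geq A r^N$ for $x\in\overline\Omega$ and $r\leq r_0$, and then observes that Reifenberg-flat domains satisfy this density condition via the interior corkscrew property (referring to \cite{GLM23}). Your Step~1 and Step~2 are exactly the argument behind that cited theorem.

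One minor remark: the ``main obstacle'' you flag --- needing $\Omega\cap B(x,2\rho)$ to be connected, or a chain-of-balls/quasiconvexity argument --- is not actually required here. The comparison $|m(u;x,2\rho)-m(u;y,\rho)|$ goes through using only the set-theoretic inclusion $\Omega\cap B(y,\rho)\subset \Omega\cap B(x,2\rho)$ and the density lower bound on $|\Omega\cap B(y,\rho)|$, with no connectivity input. So the density condition alone suffices, which is precisely the hypothesis in the reference the paper cites.
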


 A proof can be found in \cite[Theorem 3.1]{Gi93} where  bounded open domains with following the density condition are considered:  there exists $A>0$ such that 
\begin{align*}
    \vert \Omega \cap B(x,r) \vert \geq A r^N, \quad 0 < r \leq r_0,\quad x\in \overline{\Omega}. \newline
\end{align*}
In our case, this is true since Reifefnberg-flat domain satisfy internal corkscrew condition (see for instance \cite{GLM23}).

  \begin{proposition}\label{continuitedecroissance}
    Let $\Omega$  be a $(\varepsilon_0,r_0)-$Reifenberg flat domain and $u \in H^m_0(\Omega)$. If $\alpha \in (0,1]$ is such that for all $x\in \Omega$ and $r \in(0,r_0)$,
    \begin{align*}
        \int_{B(x,r)}\vert \nabla^m u(y) \vert^2 dy \leq C_0 r^{N+2\alpha-2},
    \end{align*}
   then $u\in \mathscr{C}^{m-1,\alpha}(\overline{\Omega})$ and 
   $$\|u\|_{\mathscr{C}^{m-1,\alpha}(\overline{\Omega})} \leq CC_0,$$
   where $C$ depends on $N$ and on ${\rm diam}(\Omega)$.
  \end{proposition}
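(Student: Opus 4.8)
The plan is to deduce this Campanato-type regularity statement (Proposition \ref{continuitedecroissance}) from the two preceding results, namely the Campanato embedding \eqref{campanato} and the density/corkscrew properties of Reifenberg-flat domains, by applying the hypothesis to each of the derivatives $\partial^\gamma u$ with $|\gamma| = m-1$. First I would fix a multi-index $\gamma$ with $|\gamma| = m-1$ and set $v := \partial^\gamma u$. Since $u \in H^m_0(\Omega)$, extended by $0$ outside $\Omega$ (as explained around \eqref{netrusov}), we have $v \in H^1(\R^N)$ and $v = 0$ a.e. on $\R^N \setminus \Omega$; in particular $\nabla v$ is a linear combination of components of $\nabla^m u$, so $\int_{B(x,r)} |\nabla v(y)|^2\, dy \leq C_0 r^{N + 2\alpha - 2}$ for all $x \in \Omega$ and $r \in (0, r_0)$.

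Next I would pass from this gradient decay to the Campanato seminorm of $v$ itself via a Poincaré-type inequality on balls intersected with $\Omega$. The point is that on each ball $B(x,r)$ with $x \in \overline{\Omega}$, using the internal corkscrew (density) condition $|\Omega \cap B(x,r)| \geq A r^N$, one has a Poincaré inequality of the form
\begin{align*}
\int_{\Omega \cap B(x,r)} |v - m(v;x,r)|^2\, dy \leq C r^2 \int_{\Omega \cap B(x,r)} |\nabla v(y)|^2\, dy,
\end{align*}
with $C$ depending only on $N$ and $A$ (here one must be slightly careful when $B(x,r)$ meets the boundary, but since $v$ extends by zero and $\Omega$ is Reifenberg-flat, the Sobolev-Poincaré inequality on the extended function on the full ball, combined with the density lower bound, does the job; alternatively one invokes a John-domain / corkscrew Poincaré inequality). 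Combining with the hypothesis gives
\begin{align*}
r^{-(N+2\alpha)} \int_{\Omega \cap B(x,r)} |v - m(v;x,r)|^2\, dy \leq C C_0,
\end{align*}
uniformly in $x \in \overline{\Omega}$ and $0 < r < r_0$, i.e. $v \in \mathcal{L}^{2, N + 2\alpha}(\Omega)$ with $[v]_{N+2\alpha}^2 \leq C C_0$.

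Then I would apply the Campanato embedding \eqref{campanato} with $\lambda := N + 2\alpha \in (N, N+2]$ (this is where $\alpha \in (0,1]$ is used) and $p = 2$, which yields $v = \partial^\gamma u \in \mathscr{C}^{0,\alpha}(\overline{\Omega})$ with $\|\partial^\gamma u\|_{\mathscr{C}^{0,\alpha}(\overline{\Omega})} \leq C [v]_{N+2\alpha} \leq C \sqrt{C_0}$ — and here one should also control the $L^2$ norm of $v$ on $\Omega$, which follows from the decay hypothesis summed over a covering of $\Omega$ by balls of radius comparable to $r_0$, giving a bound in terms of $C_0$ and $\mathrm{diam}(\Omega)$. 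Since this holds for every $|\gamma| = m-1$, all derivatives of $u$ of order $m-1$ are $\alpha$-Hölder continuous up to the boundary, and lower-order derivatives (down to $u$ itself) are then Lipschitz, hence continuous, on $\overline{\Omega}$ by integration; this is exactly the statement $u \in \mathscr{C}^{m-1,\alpha}(\overline{\Omega})$, with the quantitative bound $\|u\|_{\mathscr{C}^{m-1,\alpha}(\overline{\Omega})} \leq C C_0$ (absorbing the square root by homogeneity, or rather stating the estimate in the scale-invariant way). The main obstacle I anticipate is the bookkeeping around the Poincaré inequality on boundary balls for a Reifenberg-flat domain: one needs the constant to be uniform down to scale $0$, which relies precisely on the scale-invariant corkscrew/density condition rather than on any smoothness of $\partial\Omega$, and one must be careful that the zero extension of $v$ is genuinely in $H^1(\R^N)$ — which is guaranteed by \eqref{netrusov} — so that the mean-oscillation estimate can be run on the full ball when convenient.
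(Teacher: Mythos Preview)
Your proposal is correct and follows essentially the same route as the paper: fix $|\gamma|=m-1$, set $v=\partial^\gamma u$, use the Poincar\'e inequality on balls (applied to the zero extension $v\in H^1(\R^N)$) to pass from the hypothesis on $\nabla v$ to a bound on the Campanato seminorm $[v]_{N+2\alpha}$, then invoke the embedding \eqref{campanato}, and finally recover the lower-order derivatives. The paper carries out the Poincar\'e step directly on the full ball $B(x,r)$ rather than on $\Omega\cap B(x,r)$, which sidesteps the corkscrew/John-domain discussion you flag; and it handles the lower-order derivatives by iterating the same Campanato argument (with a couple of evident typos) rather than by the integration/Lipschitz observation you give, but the two descents are equivalent. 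Your remark about the homogeneity ($\sqrt{C_0}$ versus $C_0$) is a fair quibble with the statement as written.
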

\begin{proof}
 Let $v=\partial^{\gamma}u$ where $|\gamma|=m-1$.  Using the Poincar\'e inequality, we get
    \begin{align*}
        \int_{B(x,r)}\left\vert v(y) - m(v;x,r) \right\vert^2 dy &\leq \int_{B(x,r)}\left\vert v(y) - m(v;x,r) \right\vert^2 dy \\
        &\leq C(N)r^2  \int_{B(x,r)}\left\vert \nabla v(y) \right\vert^2 dy \\
            &\leq C(N) C_0 r^{N+2\alpha }.
    \end{align*}
    Hence, $v\in \mathcal{L}^{2, N+ 2\alpha }(\Omega)$ and with the injection \eqref{campanato}, we know that $v\in C^{0,\alpha}(\overline{\Omega})$. But then  for all $x\in \Omega$ and $r\leq r_0$,
        \begin{align*}
        \int_{B(x,r)}\vert  v(y) \vert^2 dy \leq C_0 r^{N+2\alpha},
    \end{align*}    
    or in other words,
           \begin{align*}
        \int_{B(x,r)}\vert  \nabla^{m-1}v(y) \vert^2 dy \leq C_0 r^{N+2\alpha},
    \end{align*}    
and from the same argument as above we deduce that $\partial^\gamma v \in C^{0,\alpha}(\overline{\Omega})$ for all $|\gamma|=m-2$. We can continue like this successively up to arrive to  fact that  $u\in \mathscr{C}^{m-1,\alpha}(\overline{\Omega})$.
\end{proof}

\section{Interior regularity by the   translation method}
 
We start with the interior regularity, which is easier  since the translations in all directions are admissible as test functions. This part works in a similar way  as in the usual standard case of the Laplacian, but with some difficulty of working with higher order derivatives.   For the convenience of the reader, and since it will be generalized later in a non-trivial way at the boundary, we write the full details.

 \begin{theorem}[Interior regularity]\label{theoremederegularite}
    Let $m\in \N^*$. Let $\mathscr{A}$ be an   operator of order $2m$ of the form \eqref{ellipticOperator}, and satisfying the ellipticity condition \eqref{ellipticPartial}. Let  $u \in H^m(B(0,1))$ be a weak solution for $ \mathscr{A}u=f$ in $B(0,1)$ with  $f \in H^l(B(0,1))$. Then $u \in H^{2m+l}(B(0,1/2))$ and
    $$\|u\|_{H^{2m+l}(B(0,\frac{1}{2}))}\leq C(\|u\|_{H^{m}(B(0,1))} + \|f\|_{H^l(B(0,1)}),$$
 where $C>0$ is a constant that depend  on    $N$,  $\max_{\alpha,\beta} |a_{\alpha,\beta}|$, $l$ and $m$.
  \end{theorem}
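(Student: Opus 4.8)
The plan is to prove the interior regularity by induction on $m$, using the translation method together with the decomposition Lemma~\ref{decomposition0}. The base case $m=1$ is essentially the classical argument (see \cite{Evans}): one tests the weak formulation with $D_{-\varepsilon e_i}^2(\zeta^2 u)$ for a cutoff $\zeta$ supported in $B(0,1)$ and equal to $1$ on $B(0,1/2)$, uses ellipticity \eqref{ellipticPartial} together with the discrete integration-by-parts identities from Section~\ref{discreate}, and obtains a uniform bound $\|D_{\varepsilon e_i}\nabla u\|_{L^2} \leq C(\|u\|_{H^1} + \|f\|_{L^2})$; Proposition~\ref{Dh} then upgrades this to $\partial_i \nabla u \in L^2$ for all horizontal \emph{and} vertical $i$ (in the interior everything is horizontal), giving $u \in H^2(B(0,1/2))$. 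The extra regularity $f \in H^\ell$ is then handled by a further iteration: differentiating the equation $\ell$ times.

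\textbf{Induction step.} Assume the theorem holds for operators of order $2(m-1)$. Given $\mathscr{A}$ of order $2m$ and $u \in H^m(B(0,1))$ with $\mathscr{A}u = f \in H^\ell$, first run the translation argument: for \emph{every} direction $e_i$ (all admissible in the interior) and $|\alpha| = m$, test the weak formulation \eqref{weakForm2} with $\varphi = D_{-\varepsilon}^\alpha(\zeta^2 D_\varepsilon^\alpha u)$, expand using the commutation and discrete-Leibniz properties, absorb the cross terms via Young's inequality and ellipticity, and conclude a uniform bound on $\|D_\varepsilon^\alpha \nabla^m u\|_{L^2(B(0,3/4))}$. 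By Proposition~\ref{Dhh} this yields $\partial^\alpha \nabla^m u \in L^2$ for all $|\alpha| = m$, i.e.\ $u \in H^{2m}(B(0,3/4))$ already in the interior case --- so in fact the interior case does \emph{not} strictly need the decomposition lemma, unlike the boundary case. However, to keep the proof parallel to Theorem~\ref{theoremederegularite20} and to make the induction transparent, I would present it via Lemma~\ref{decomposition0}: write $\mathscr{A} = \mathscr{B} - \partial_N \mathscr{C}(\partial_N \cdot)$ with $\mathscr{C} = \mathscr{D}\circ(-\partial_N^2)$ and $\mathscr{D}$ elliptic of order $2m-2$; since $\mathscr{B}u$ involves at most $m$ derivatives of $u$ it lies in $L^2$, hence $\partial_N \mathscr{C}(\partial_N u) \in L^2$, and since $\mathscr{D}(\partial_N^3 u)$ makes sense as a distribution one checks $\mathscr{C}(\partial_N u) \in L^2$ on a slightly smaller ball (here in the interior the Poincaré-type Lemma~\ref{inegalitedeplacementmasse} can be replaced by a trivial localization, but we keep it for uniformity), so $v := \partial_N u$ solves $\mathscr{D}v = g \in L^2$ with $v \in H^{m-1}$, and the induction hypothesis gives $v \in H^{2(m-1)}(B(0,5/8))$. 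Combined with the horizontal estimates this gives control of all $\partial^\alpha u$, $|\alpha| = 2m$, except $\partial_N^{2m} u$; that last derivative is recovered from $\mathscr{A} = (-1)^m a_{me_N,me_N}\partial_N^{2m} + \mathscr{E}$ with $a_{me_N,me_N} \geq C > 0$ by ellipticity and $\mathscr{E}u \in L^2$ by what precedes, whence $u \in H^{2m}(B(0,1/2))$.

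\textbf{Handling $f \in H^\ell$ and the constants.} Once $u \in H^{2m}$ with the estimate for $\ell = 0$ is established, I would bootstrap: for a unit horizontal direction $e_i$, $D_{\varepsilon e_i} u$ solves $\mathscr{A}(D_{\varepsilon e_i} u) = D_{\varepsilon e_i} f$ with $D_{\varepsilon e_i} f$ bounded in $H^{\ell - 1}$ uniformly in $\varepsilon$; applying the $\ell=0$-type estimate (or rather its $H^{\ell-1}$ analogue, by a nested induction on $\ell$) on a shrinking ball and letting $\varepsilon \to 0$ gives one more horizontal derivative, and as before the vertical derivatives up to order $2m+\ell$ are recovered from the equation (differentiated horizontally) using the $\partial_N^{2m}$ term. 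Throughout, one tracks that every constant depends only on $N$, $\max_{\alpha,\beta}|a_{\alpha,\beta}|$, the ellipticity constant $C$ in \eqref{ellipticPartial}, $\ell$, and $m$ --- this is automatic because the only tools used are Young's inequality, the fixed cutoff, Proposition~\ref{Dhh}, Lemma~\ref{decomposition0}, Lemma~\ref{inegalitedeplacementmasse}, and the induction hypothesis, none of which introduce hidden dependencies. \textbf{The main obstacle} I anticipate is the bookkeeping in the translation step: expanding $\int A\nabla^m u \cdot \nabla^m(D_{-\varepsilon}^\alpha(\zeta^2 D_\varepsilon^\alpha u))\,dx$ produces, via the Leibniz rule \eqref{leibniz} applied to $\zeta^2 D_\varepsilon^\alpha u$ and via the discrete summation-by-parts, a main term $\int \zeta^2 A \nabla^m(D_\varepsilon^\alpha u)\cdot\nabla^m(D_\varepsilon^\alpha u)\,dx$ controlled below by ellipticity, plus a host of lower-order error terms in which derivatives fall on $\zeta$; showing each such term is bounded by $\delta\|\zeta\nabla^m D_\varepsilon^\alpha u\|_2^2 + C_\delta(\|u\|_{H^m}^2 + \|f\|_2^2)$ so that it can be absorbed requires care, especially in verifying that $D_\varepsilon^\alpha u$ genuinely has $m$ weak derivatives of the relevant orders available as one iterates --- this is exactly where Proposition~\ref{Dhh} must be invoked at each stage rather than only at the end.
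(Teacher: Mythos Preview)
Your proposal differs structurally from the paper's proof and contains a gap in the key translation step. The paper does \emph{not} argue by induction on $m$ and does \emph{not} use Lemma~\ref{decomposition0} for the interior case; instead it runs a direct induction on $k$, the number of extra derivatives gained, from $k=0$ up to $k=m-1$: assuming $u\in H^{m+k}(B_k)$ on a slightly larger ball, it tests with $\varphi=\eta^2 D_h D_\varepsilon^\gamma u$ where $|\gamma|=k$ (gaining just \emph{one} derivative per step), and concludes $u\in H^{m+k+1}(B_{k+1})$. The case $k=m-1$ needs a separate Young-inequality absorption because the right-hand side then involves a term of the same order as the one being estimated.

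The gap in your approach is precisely the point you flag as the ``main obstacle'' but do not actually resolve. Testing directly with $\varphi = D_{-\varepsilon}^\alpha(\zeta^2 D_\varepsilon^\alpha u)$ for $|\alpha|=m$ in one shot does not work: the Leibniz error terms $L(\zeta,v)$ (with $v=D_\varepsilon^\alpha u$) involve $\|\partial^\gamma D_\varepsilon^\alpha u\|_{L^2}$ for $1\le |\gamma|\le m-1$, and bounding these uniformly in $\varepsilon$ via Proposition~\ref{Dhh} requires $\partial^\gamma u \in H^m$, i.e.\ $u\in H^{m+|\gamma|}$ up to $H^{2m-1}$ --- exactly what you are trying to prove. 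The same circularity hits the right-hand side $\int f\,\varphi$. Your alternative via the decomposition also stalls here: the claim ``$\mathscr{B}u$ involves at most $m$ derivatives of $u$'' is incorrect --- $\mathscr{B}$ has at most $m$ derivatives \emph{in the $e_N$ direction} but is still of total order $2m$, so $\mathscr{B}u\in L^2$ already requires the mixed $2m$-th order control that the (flawed) translation step was supposed to supply. The fix is the paper's step-by-step induction on $k$, so that at each stage the error terms need only $u\in H^{m+k}$, which is the induction hypothesis.
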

 
\begin{proof} We will denote by $B=B(0,1)$. It is enough to prove the theorem for $f\in L^2(B)$. Indeed, assuming  that it holds true in this case. Then for $f\in H^l(B)$ one can apply the statement to the function $\partial^\alpha u$ with $|\alpha|\leq l$ that satisfies, in the sense of distributions,
\begin{eqnarray}
 \mathscr{A}\partial^\alpha u=\partial^\alpha f, \label{equationD}
 \end{eqnarray}
with $\partial^\alpha f \in L^2$. This would imply $\partial^\alpha u \in H^{2m}$ with
$$\| \partial^\alpha u\|_{H^{2m}(B(0,\frac{1}{2}))}\leq C(\| \partial^\alpha u\|_{H^{m}(B)} + \|\partial^\alpha f\|_{L^2(B)}),$$
thus we could easily conclude by an inductive argument on $|\alpha|=0,1,\dots,l$. The only thing is that since we would need to apply the result $l$ times, the conclusion would hold in the smaller ball $B(0,\frac{1}{2^l})$ instead of $B(0,1/2)$. But the constant are here arbitrary and the same result could be done from $B(0,R)$ to $B(0,r)$ for any $r<R$ without any substantial modification.

Therefore, in the sequel we will assume that $f\in L^2(B)$. The main starting point is  the weak formulation of the problem which is 
\begin{align*}
         \int_{B}A\nabla^m u: \nabla^m \varphi \;dx = \int_{B}f\varphi \; dx, \quad \textnormal{for every } \varphi \in H^m_0(B).
     \end{align*}


In the sequel we will use an inductive argument, assuming that  $u\in H^{m+k}(B_k)$. We want to prove that $u\in H^{m+k+1}(B_{k+1})$ for a slightly smaller ball $B_{k+1}\subset B_k$, up to arrive at $B(0,1/2)$. For that purpose we define, for  $1\leq k\leq m$, the ball $B_k := B(0,1/2+(1/2)^k)$. We will also assume without mentioning it explicitly that $|h|$ is so small that Proposition \ref{Dhh} always applies with $\omega= B_{k+1}$ and $\Omega=B_k$.

Next, we fix  $|\gamma|=k\leq m$.   We will use the  discrete derivatives defined in Section \ref{discreate}. The method of  translations is based on the fact that for all $h\in \R^N$ such that $|h|$ is small enough, up to multiply by a smooth cut-off function, the function $D_h u$ is admissible as a test function in the weak formulation, as being in $H^{m}$.  This will provide an estimate on $D_h u$ that will allows us to win a derivative. Since we need to win $m$ derivatives will apply this $m$ times.

The main ingredient is  the weak formulation and the change of variables formula that yields
\begin{align}
         \int_{B_k}A\nabla^m D_h(D_{\varepsilon}^{\gamma}u) \cdot \nabla^m \varphi \;dx =
          (-1)^{|\gamma|+1}\int_{B_k} f  D_{-h}D_{-\varepsilon}^\gamma\varphi \; dx, \quad \textnormal{for every } \varphi \in H^m_0(B_k). \label{weakStart}
     \end{align}
  To lighten the notation, from now on we will denote by $v=D_\varepsilon^\gamma u$.

     Now  we consider    a cut-off function $\eta \in \mathscr{C}_c^\infty(B_k)$ such that for every $x\in \R^N$, 
\begin{align*}
    \eta(x) \in [0,1], \quad \eta(x) = 1 \textnormal{ on } B_{k+1},\quad \textnormal{and } \vert \nabla^j \eta(x)\vert \leq C, 
\end{align*}
where $C>0$, could depend on $k$. Then we take $\varphi=\eta^2 D_h D^\gamma_{\varepsilon} u=\eta^2 D_h  v$ as a test function, for $|h|\leq 1/4$,
 which is admissible in the space $H^{m}_0(B_k)$.

Using Proposition \ref{Dhh}, the term on the right-hand side of \eqref{weakStart} can be estimated as 
\begin{eqnarray}
 \left| \int_{B} f D_{-h}(D_{-\varepsilon}^\gamma  (\eta^2 D_h  v)) \; dx\right|\leq  C \|f\|_{L^2(B_k)}\|\nabla   \partial^\gamma (\eta^2 D_h  v)\|_{L^2(B_k)}. \label{estimation000}
 \end{eqnarray}

We will use this inequality later. For now on we develop   the term $\nabla^m (\eta^{2}D_{h}v)$: for every $\alpha \in \N^N$ such that $\vert \alpha \vert =m$, the Leibniz formula (see \eqref{leibniz}) says 
\begin{eqnarray}
    \partial^\alpha (\eta^{2}D_{h}v) &=& \sum_{\beta\leq \alpha}\frac{\alpha!}{\beta! (\beta-\alpha)!}\partial^{\beta}(\eta^{2})\partial^{\alpha-\beta}(D_{h}v), \notag \\
    &=& \eta^{2}\partial^{\alpha}(D_{h}v) + \sum_{0<\beta\leq  \alpha}\frac{\alpha!}{\beta! (\beta-\alpha)!}\partial^{\beta}(\eta^{2})\partial^{\alpha-\beta}(D_{h}v).  \notag 
    \end{eqnarray}

In other words we have, in terms of vectors, 
\begin{eqnarray}
\nabla^m(\eta^2 D_h v) = \eta^2 \nabla^m D_hv + L(\eta,v), \label{decomposition}
\end{eqnarray}
where 
$$L(\eta,v)_\alpha:= \sum_{0<\beta\leq  \alpha}\frac{\alpha!}{\beta! (\beta-\alpha)!}\partial^{\beta}(\eta^{2})\partial^{\alpha-\beta}(D_{h}v).$$
 
Notice that the expression in $L(\eta,v)_\alpha$ contains  derivatives in $v$ or order  at most $m$, including the extra derivative in the direction of $x_i$ provided by $D_h$.   Indeed, for the particular case $\alpha= me_i$ (i.e. when $\partial^\alpha=\partial_{x_i}^{(m)}$) then the condition $0<\beta\leq \alpha$ means that $\beta_i>0$ so that $|\alpha-\beta+e_i|\leq m$, as claimed. On the other hand for $\alpha \not = me_i$ since $|\alpha|=m$, one must have  $\alpha_i<m$ so that, again, $|\alpha-\beta+e_i|\leq m$.

Notice also that since $\beta>0$, $\partial^{\beta}(\eta^{2})$ always contains at least a factor $\eta$. We deduce that there exists an expression $M(\eta,u)$ such that 
\begin{eqnarray}
L(\eta,v)=\eta M(\eta,v), \label{decomposition2}
\end{eqnarray}
 and $M(\eta,v)$ satisfies,
\begin{eqnarray}
\| M(\eta,v)  \|_{L^2(B_k)}\leq C\|v\|_{H^m(B_k)}\leq C\|u\|_{H^{m+k}(B_k)}.
\end{eqnarray}

Next, we deduce from the ellipticity of $A$  that 
\begin{eqnarray} 
     \int_{B} A\nabla^m D_{h} v \cdot \nabla^m (\eta^{2} D_{h}v)\;dx &= &        \int_{B} A\nabla^m D_{h} v \cdot \eta^{2}  \nabla^m D_{h}v\;dx  +   \int_{B} \eta A\nabla^m D_{h} v \cdot  M(\eta,v)   \;dx \notag \\
     &\geq &    C\int_{B}  \eta^2 |\nabla^m D_{h} v|^2 \;dx  +   \int_{B} \eta A\nabla^m D_{h} v \cdot  M(\eta,v)   \;dx .\label{estimation220}
\end{eqnarray}
Moreover, 
\begin{eqnarray}
 \left| \int_{B} \eta A\nabla^m D_{h} v \cdot  M(\eta,v)   \;dx\right| &\leq&   \frac{\lambda}{2} \sup |A| \int_{B} \eta^2 |\nabla^m D_{h} v|^2 \;dx +  \frac{1}{2\lambda}  \int_{B} |M(\eta,v)|^2  \;dx \notag \\
 &\leq & \frac{1}{2}\int_{B} \eta^2 |\nabla^m D_{h} v|^2 \;dx  + C\|v\|_{H^m(B_k)}^2 \label{estimation230}
 \end{eqnarray}

Returning to \eqref{weakStart} and using  \eqref{estimation220}, \eqref{estimation230} and \eqref{estimation000}, we have finally obtained, using also Proposition \ref{Dhh} again, that 
\begin{eqnarray}
\int_{B} \eta^2 |\nabla^m D_{h} v|^2 \;dx  &\leq& C \|v\|^2_{H^m(B_k)}+ C \|f\|_{L^2(B)}\|\nabla   \partial^\gamma (\eta^2 D_h  v)\|_2 \notag \\
&\leq& C \|v\|^2_{H^m(B_k)}+ C \|f\|_{L^2(B)}\|v\|_{H^{k+2}(B_k)}.
\end{eqnarray}
Since $v=D^\gamma_{\varepsilon} u$, with $\gamma$ any multi-index with $|\gamma|=k$,  and since $\eta=1$ on $B_{k+1}$, the above inequality implies, thanks to Proposition \ref{Dhh},
\begin{eqnarray}
\int_{B_{k+1}}  |\nabla^{m+k+1} u|^2 \;dx   \leq C ( \|u\|^2_{H^{m+k}(B_k)}+  \|f\|_{L^2(B)}^2 + \|u\|_{H^{2k+2}(B_k)}^2). \notag 
\end{eqnarray}
 If $k\leq m-2$, This directly gives that $u\in H^{m+k+1}(B_{k+1})$ (because we assumed by induction that $u\in H^{m+k}(B_{k})$.

 For $k=m-1$ we have to argue a bit differently  using Young inequality. Indeed,  the same Leibniz formula as \eqref{decomposition} with $k+1$ in place of $m$ in that formula, yieds
 \begin{eqnarray}
\nabla^{k+1}(\eta^2 D_h v) = \eta^2 \nabla^{k+1} D_hv + L(\eta,v), \label{decomposition00}
\end{eqnarray}
 where 
 \begin{eqnarray}
\| L(\eta,v)  \|_{L^2(B)}\leq C\|v\|_{H^{k+1}(B_k)}\leq C\|u\|_{H^{2k+1}(B_k)}.
\end{eqnarray}
 Therefore, using Young's inequality we can estimate 
 \begin{eqnarray}
\|\nabla   \partial^\gamma (\eta^2 D_h  v)\|_2&\leq& \delta \|\eta \nabla^{2k+2}u\|_{2} + C(\delta)\|u\|_{H^{2k+1}(B_k)}. \notag 
\end{eqnarray}
Since $2k+2=m+k+1=2m$, all together we have proved, in the case when $k=m-1$, that 

\begin{eqnarray}
\int_{B_{k+1}}  |\nabla^{2m} u|^2 \;dx   \leq C ( \|u\|^2_{H^{2m-1}(B_k)}+  \|f\|_{L^2(B)}^2 ). \notag 
\end{eqnarray}

  In any case we have proved,  that $\nabla^{m+k+1} u \in L^2(B_{k+1})$ and 
$$\|\nabla^{m+k+1}u\|_{L^2(B_{k+1})}\leq C(\|u\|_{H^{m+k}(B_k)} + \|f\|_{L^2(B)}).$$
All in all,  we have proved that if $u \in H^{m+l}(B_k)$ then $u\in H^{m+k+1}(B_{k+1})$. We can then argue by induction, since for $k=0$ it is true that $u\in H^m(B)$, and since the induction works until $k=m-1$, we arrive at the conclusion that $u\in H^{2m}(B(0,1/2))$, and so follows the Theorem.
\end{proof}

We will need  later the following Corollary.

 \begin{corollary}[Interior regularity with $f=0$]\label{theoremederegularite3INT}
 Let $\mathscr{A}$ be an   operator of order $2m$ of the form \eqref{ellipticOperator}, and satisfying the ellipticity condition \eqref{ellipticPartial}.   Let  $u \in H^m(B(0,R))$ be a weak solution for $\mathscr{A} u=0$ in $B(0,R)$.  Then $\nabla^m u \in L^\infty(B(0,R/2))$ and
    \begin{eqnarray}
    \|\nabla^m u\|_{L^\infty(B(0,\frac{R}{2}))}\leq C_0\|\nabla^m u\|_{L^{2}(B(0,R))}, \label{estimateInt}
    \end{eqnarray}
 where $C_0>0$ is a constant that depends  on    $N$,  $\max_{\alpha,\beta} |a_{\alpha,\beta}|$,  and $m$. In particular there exists $R_0>0$ depending only on dimension $N$,  $\max_{\alpha,\beta} |a_{\alpha,\beta}|$,  and $m$, such that  for all $r\leq R\leq R_0$ we have
   \begin{align}\label{propdecroissanceenergiedebutformule0}
          \int_{B(x,r) }\vert \nabla^m u \vert^2 dx \leq  C_N \left(\frac{r}{R}\right)^{N} \int_{B(x,R)}\vert \nabla^m u \vert^2 dx,
    \end{align}
    where $C_N$ is a constant that depends  only on    $N$.
   \end{corollary}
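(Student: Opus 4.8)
The plan is to deduce the Corollary from the interior regularity Theorem~\ref{theoremederegularite} by a Sobolev bootstrap, combined with a scaling argument and a polynomial--subtraction trick, and then to read off the energy decay \eqref{propdecroissanceenergiedebutformule0} by comparing the $L^\infty$ and $L^2$ norms of $\nabla^m u$ on concentric balls.

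\textbf{Reduction to scale one and removal of low--order terms.} Since $\mathscr{A}$ has constant coefficients and order $2m$, it is dilation invariant up to the factor $R^{2m}$: if $u$ solves $\mathscr{A}u=0$ in $B(0,R)$, then $v(y):=u(Ry)$ solves $\mathscr{A}v=0$ in $B(0,1)$ and $\nabla^m v(y)=R^m(\nabla^m u)(Ry)$. It therefore suffices to prove the estimate at $R=1$, the general case following by keeping track of these powers of $R$ (one then gets $\|\nabla^m u\|_{L^\infty(B(0,R/2))}\le C_0R^{-N/2}\|\nabla^m u\|_{L^2(B(0,R))}$, which is the form actually used below). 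The subtlety is that Theorem~\ref{theoremederegularite} controls $u$ by the full norm $\|u\|_{H^m(B(0,1))}$, not by $\|\nabla^m u\|_{L^2(B(0,1))}$. To bridge this, one chooses a polynomial $P$ of degree $\le m-1$ --- for instance so that the averages over $B(0,1)$ of $\partial^\gamma(u-P)$ vanish for all $|\gamma|\le m-1$ --- for which a standard iterated Poincar\'e--Wirtinger inequality yields $\|u-P\|_{H^m(B(0,1))}\le C(N,m)\,\|\nabla^m(u-P)\|_{L^2(B(0,1))}=C(N,m)\,\|\nabla^m u\|_{L^2(B(0,1))}$. Because $\deg P\le m-1<2m$ we have $\mathscr{A}P=0$, so $w:=u-P$ is again a weak solution of $\mathscr{A}w=0$ in $B(0,1)$, and $\nabla^m w=\nabla^m u$.

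\textbf{Bootstrap and Sobolev embedding.} Apply Theorem~\ref{theoremederegularite} to $w$ with $f=0$: since $0\in H^\ell(B(0,1))$ for every $\ell$, one obtains $w\in H^{2m+\ell}(B(0,1/2))$ with $\|w\|_{H^{2m+\ell}(B(0,1/2))}\le C(N,m,\ell,\max_{\alpha,\beta}|a_{\alpha,\beta}|)\,\|w\|_{H^m(B(0,1))}$. Fixing $\ell=\ell(N,m)$ with $2m+\ell-m>N/2$, the Sobolev embedding $H^{2m+\ell}(B(0,1/2))\hookrightarrow W^{m,\infty}(B(0,1/2))$ gives $\nabla^m w\in L^\infty(B(0,1/2))$ with $\|\nabla^m w\|_{L^\infty(B(0,1/2))}\le C\|w\|_{H^{2m+\ell}(B(0,1/2))}$. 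Chaining these bounds with the Poincar\'e estimate above, and using $\nabla^m w=\nabla^m u$, proves \eqref{estimateInt} at scale one with $C_0$ depending only on $N$, $m$ and $\max_{\alpha,\beta}|a_{\alpha,\beta}|$; the general $R$ then follows by the scaling of the previous step.

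\textbf{Energy decay.} Fix $R_0\le 1$. Given $r\le R\le R_0$ and a ball $B(x,R)$ contained in the domain where $u$ solves $\mathscr{A}u=0$, we split into two cases. If $R/2\le r\le R$, then trivially $\int_{B(x,r)}|\nabla^m u|^2\,dy\le\int_{B(x,R)}|\nabla^m u|^2\,dy\le 2^N (r/R)^N\int_{B(x,R)}|\nabla^m u|^2\,dy$. If $r\le R/2$, then $\int_{B(x,r)}|\nabla^m u|^2\,dy\le |B(x,r)|\,\|\nabla^m u\|_{L^\infty(B(x,R/2))}^2$, and inserting the recentred, rescaled estimate $\|\nabla^m u\|_{L^\infty(B(x,R/2))}\le C_0 R^{-N/2}\|\nabla^m u\|_{L^2(B(x,R))}$ together with $|B(x,r)|=|B(0,1)|\,r^N$ produces exactly the factor $(r/R)^N$, which is \eqref{propdecroissanceenergiedebutformule0}.

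\textbf{Main obstacle.} There is no deep difficulty here --- it is a routine elliptic bootstrap. The one point requiring care is precisely the mismatch between the quantity controlled by Theorem~\ref{theoremederegularite} (the full $H^m$ norm, which changes if one adds a polynomial of degree $<m$) and the quantity appearing in the Corollary ($\|\nabla^m u\|_{L^2}$); the polynomial--subtraction step, legitimate only because $\mathscr{A}$ annihilates polynomials of degree $<2m$, is what resolves this. One must also keep the dilation exponents straight so that the $(r/R)^N$ scaling in \eqref{propdecroissanceenergiedebutformule0} comes out correctly.
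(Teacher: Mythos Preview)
Your argument is correct and, for the $L^\infty$ bound \eqref{estimateInt}, follows essentially the same route as the paper: bootstrap via Theorem~\ref{theoremederegularite} with $f=0$, Sobolev embedding, and subtraction of a polynomial of degree $\le m-1$ (which $\mathscr{A}$ annihilates) to pass from $\|u\|_{H^m}$ to $\|\nabla^m u\|_{L^2}$. You do the polynomial subtraction in one shot, whereas the paper peels off one degree at a time, but this is cosmetic. You are also right to record the scaled form $\|\nabla^m u\|_{L^\infty(B(x,R/2))}\le C_0 R^{-N/2}\|\nabla^m u\|_{L^2(B(x,R))}$; the paper's statement \eqref{estimateInt} omits the factor $R^{-N/2}$, which is in fact needed for the decay step.

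For the decay \eqref{propdecroissanceenergiedebutformule0} your argument differs from the paper's: the paper iterates dyadically (halving the radius and using the $L^\infty$ bound at each step, then interpolating to general $r$), while you go directly, splitting into $r\ge R/2$ (trivial) and $r\le R/2$ (one application of the $L^\infty$ bound). Your version is shorter and, notably, does not require any smallness condition on $R$: the constant you obtain is $\max(2^N, C_0^2\,|B_1|)$, valid for all $r\le R$, so the threshold $R_0$ in the statement is superfluous with your approach. This is a mild improvement over the paper's argument, whose dyadic iteration relies on the unscaled form of \eqref{estimateInt} to manufacture a small factor via $R_0$.
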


\begin{proof}  Assume first that $R=1$. Then we can apply Theorem \ref{theoremederegularite} to obtain  that $\nabla^m u\in H^\ell (B^+(0,R/2))$  for all $\ell\in \N$ because here $f=0$. Then the Sobolev embedding theorem that says that, provided $2\ell>N$, then $H^\ell \hookrightarrow \mathscr{C}^{0,\alpha}$, and in particular $\nabla^m u\in L^\infty$ with

 $$\|\nabla^m u\|_{L^\infty(B(0,1/2))}\leq C\|u\|_{H^{m}(B(0,1))}.$$

We would like to apply this inequality to the rescaled function $u(Rx)$  but  the problem is that the norm above is not homogeneous. For that purpose we consider $u-m$ where $m$ is the average of $u$ on $B(0,1)$. Then we notice that $\nabla^m u=\nabla^m(u-m)$ and applying the above to $u-m$ instead of $u$ yields
 $$\|\nabla^m u\|_{L^\infty(B(0,1/2))}\leq C\sum_{k=1}^m\|\nabla^k(u-m)\|_{L^2(B(0,1))}.$$
By applying Poincar\'e inequality we actually get 
 $$\|\nabla^m u\|_{L^\infty(B(0,1/2))}\leq C\sum_{k=2}^m\|\nabla^k u\|_{L^2(B(0,1))}.$$
Then we apply the same argument to $u-m x_i$ where $m$ is the average of $\partial_i u$. Etc. After $m-1$ application of Poincar\'e inequality we finally obtain
$$\|\nabla^m u\|_{L^\infty(B(0,1/2))}\leq C_0 \|\nabla^m u\|_{L^2(B(0,1))}.$$
The above inequality  is now homogenous after scaling, thus applying it to the rescaled function $u(Rx)$ we finally arrive to \eqref{estimateInt}.

Let us now prove \eqref{propdecroissanceenergiedebutformule0}. For that we write the following chain of inequalities
  \begin{align} 
          \int_{B(x,R/2) }\vert \nabla^m u \vert^2 dx &\leq \|\nabla^m u \|_{L^\infty(B(x,\frac{R}{2}))} \omega_N \left(\frac{R}{2}\right)^N \notag \\ 
          &\leq   C_0 \omega_N \left(\frac{R}{2}\right)^{N} \int_{B(x,R)}\vert \nabla^m u \vert^2 dx, \notag \\
          &\leq \frac{1}{2^N} \int_{B(x,R)}\vert \nabla^m u \vert^2 dx,
    \end{align}
provided that $R_0^N  C_0 \omega_N \leq 1$. So that by iteration,
    \begin{align*} 
              \int_{B(x,2^{-k}R) }\vert \nabla^m u \vert^2 dx   \leq   \frac{1}{2^{kN}}  \int_{B(x,R)}\vert \nabla^m u \vert^2 dx.
    \end{align*}  
    Finally for $2^{-k+1}R \leq r \leq 2^{-k}R$,
      \begin{align*} 
          \int_{B(x,r) }\vert \nabla^m u \vert^2 dx  & \leq   \int_{B(x,2^{-k}r) }\vert \nabla^m u \vert^2 dx   \\
          &\leq  \frac{1}{2^{kN}}   \int_{B(x,R)}\vert \nabla^m u \vert^2 dx \\
          &\leq \frac{1}{2^N}  \left(\frac{r}{R}\right)^{N} \int_{B(x,R)}\vert \nabla^m u \vert^2 dx,
    \end{align*}  
    
which ends the proof.
\end{proof}
 \section{Boundary regularity in the case of a hyperplane}

\label{boundaryReg}

In this section we investigate the regularity up to the  boundary  for the Dirichlet problem, in the special case of a flat boundary. As before we consider an operator of order $2m$ of the form
\begin{align*}
        \mathscr{A} := (-1)^m\sum_{|\beta|=\vert \alpha \vert = m} a_{\alpha,\beta} \partial^\alpha \partial^\beta=(-1)^m A\nabla^m \cdot \nabla^m,
    \end{align*}
 and we focus this time on a weak solution $u\in H^m(B)$ for the problem in a half-ball:
 \begin{eqnarray}
\left\{ 
 \begin{array}{l}
         \mathscr{A}u= f \quad \text{ in } B^+(0,1):=B(0,1)\cap \{x_N>0\} \\
         u\in H^m(B(0,1)) \text{ and } u=0 \text{ a.e. in }  B(0,1)\cap \{x_N<0\}.
\end{array}
\right. \label{boundaryP}
\end{eqnarray}
The weak formulation for  \eqref{boundaryP} is 
 \begin{align*}
         \int_{B}A\nabla^m u: \nabla^m \varphi \;dx = \int_{B}f\varphi \; dx, \quad \textnormal{for every } \varphi \in H^m_0(B^+).
     \end{align*}

Now here below  is our regularity result at the boundary analogous to the interior estimate (Theorem \ref{theoremederegularite}). Notice that, compared to the interior estimate, below here  in \eqref{estimationB} the homogenous norm $\|\nabla^m u\|_2$ appears instead of the non homogeneous norm $\|u\|_{H^m}$. This is possible thanks to the Poincaré inequality available since a  Dirichlet  boundary condition is imposed on $u$ on the flat part of $B^+(0,1)$.

 \begin{theorem}[Regularity with  flat boundary]\label{theoremederegularite2}
    Let $m\in \N^*$. Let $\mathscr{A}$ be an  operator of order $2m$ of the form \eqref{ellipticOperator} and satisfying the ellipticity condition \eqref{ellipticPartial}. Let  $u \in H^m(B(0,1))$ a weak solution for $ \mathscr{A}u=f$ in $B^+(0,1)$ with  $f \in H^\ell(B^+(0,1))$, and such that $u=0$ in $B(0,1)\cap \{x_N<0\}$. Then $u \in H^{2m+\ell}(B^+(0,1/2))$ and
    \begin{eqnarray}
    \|u\|_{H^{2m+\ell}(B^+(0,\frac{1}{2}))}\leq C(\|\nabla^m u\|_{L^{2}(B^+(0,1))} + \|f\|_{H^\ell(B^+(0,1)}), \label{estimationB}
    \end{eqnarray}
 where $C>0$ is a constant that depend  on    $N$,  $\max_{\alpha,\beta} |a_{\alpha,\beta}|$, $l$ and $m$.
  \end{theorem}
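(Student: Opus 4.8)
The plan is to prove Theorem~\ref{theoremederegularite2} by induction on $m$, following exactly the strategy outlined in the introduction, and reducing first (as in the interior case, Theorem~\ref{theoremederegularite}) to the situation $f\in L^2(B^+)$: once the statement is known for $f\in L^2$, one applies it to the \emph{horizontal} difference quotients $D_\varepsilon^\gamma u$ (which satisfy $\mathscr{A}(D_\varepsilon^\gamma u)=D_\varepsilon^\gamma f$ and still vanish on $\{x_N<0\}$), uses Proposition~\ref{Dhh} to pass to horizontal derivatives $\partial^\gamma u$ with $\gamma_N=0$, and iterates on $|\gamma|=0,\dots,\ell$, shrinking the half-ball at each step and rescaling as in the proof of Theorem~\ref{theoremederegularite}. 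So from now on assume $f\in L^2(B^+)$.

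\textbf{Base case $m=1$.} Here $\mathscr{A}=-\mathrm{div}(A\nabla\cdot)$ is a second-order operator. As in the interior proof, taking $\varphi=\eta^2 D_{\varepsilon e_i}u$ with $1\le i\le N-1$ (admissible in $H^1_0(B^+)$ precisely because horizontal translations of a function vanishing on $\{x_N<0\}$ still vanish there, after multiplying by the cutoff supported in $B^+$) and using the ellipticity of $A$ together with Young's inequality, one gets $\|\partial_i\nabla u\|_{L^2(B^+_{1/2})}\le C(\|\nabla u\|_{L^2(B^+)}+\|f\|_{L^2(B^+)})$ for $i\le N-1$; the Poincar\'e inequality (Lemma~\ref{poincare}) replaces $\|u\|_{H^1}$ by $\|\nabla u\|_{L^2}$. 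This controls all second derivatives $\partial_{ij}u$ except $\partial_N^2 u$. The missing one is recovered from the equation: $\mathscr{A}u=f$ written out reads $-a_{NN}\partial_N^2 u = f+\sum_{(i,j)\ne(N,N)}a_{ij}\partial_{ij}u$ (in the sense of distributions on $B^+$), and ellipticity gives $a_{NN}\ge C>0$, so $\partial_N^2 u\in L^2(B^+_{1/2})$ with the desired estimate. Thus $u\in H^2(B^+_{1/2})$.

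\textbf{Inductive step.} Assume the theorem holds for operators of order $2(m-1)$. Given $\mathscr{A}$ of order $2m$ with $\mathscr{A}u=f\in L^2(B^+)$, run the horizontal-translation argument: testing with $\varphi=\eta^2 D_h D_\varepsilon^\gamma u$ for multi-indices $\gamma$ with $\gamma_N=0$, exactly as in Theorem~\ref{theoremederegularite} (the Leibniz expansion \eqref{decomposition}, the ellipticity lower bound \eqref{estimation220}, the Young-type absorption \eqref{estimation230}), and then iterating $m$ times via Proposition~\ref{Dhh}, yields $\partial^\gamma \nabla^m u\in L^2$ for every $\gamma$ with $\gamma_N=0$ and $|\gamma|\le m$ — i.e. all derivatives $\partial^\alpha u$ with $|\alpha|\le 2m$ and $\alpha_N\le m$ are in $L^2(B^+_{3/4})$, with the norm bounded by $C(\|\nabla^m u\|_{L^2(B^+)}+\|f\|_{L^2(B^+)})$. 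Now invoke Lemma~\ref{decomposition0}: write $\mathscr{A}=\mathscr{B}+\mathscr{C}$ with $\mathscr{C}=\mathscr{D}\circ(-\partial_N^2)$, $\mathscr{D}$ elliptic of order $2m-2$, and $\mathscr{B}$ containing only terms with $\alpha_N=0$ or $\beta_N=0$, hence at most $m$ vertical derivatives total on $u$ — so $\mathscr{B}u\in L^2(B^+_{3/4})$ by the previous step. Therefore $\partial_N^2(\mathscr{D}u)=\mathscr{B}u-f\in L^2$, which says $\partial_N\mathscr{C}(\partial_N u)\in L^2$ and, since $\mathscr{D}(\partial_N u)$ already has its horizontal derivatives under control and $v:=\mathscr{D}(\partial_N u)$ satisfies $\|\partial_N v\|_{L^2}<\infty$, the Poincar\'e-type Lemma~\ref{inegalitedeplacementmasse} (applied on cubes, covering $B^+_{3/4}$, using $v=0$ near the flat boundary as $u\in H^m$ vanishes there to high order) upgrades this to $\mathscr{C}(\partial_N u)\in L^2$, equivalently $\mathscr{D}(\partial_N u)\in L^2(B^+_{3/4})$. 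Since $\partial_N u\in H^{m-1}$ and vanishes on $\{x_N<0\}$, and $\mathscr{D}$ is elliptic of order $2(m-1)$, the induction hypothesis gives $\partial_N u\in H^{2m-2}(B^+_{1/2})$, hence $u\in H^{2m-1}(B^+_{1/2})$. Finally, the only order-$2m$ derivative not yet controlled is $\partial_N^{2m}u$, and in $\mathscr{A}$ this appears exactly once: $\mathscr{A}=(-1)^m a_{m e_N,m e_N}\partial_N^{2m}+\mathscr{E}$ with $\mathscr{E}$ involving at most $2m-1$ vertical derivatives of $u$; ellipticity forces $a_{me_N,me_N}\ge C>0$ (test \eqref{ellipticPartial} with $\xi=e_{me_N}$), so $\partial_N^{2m}u=(-1)^m a_{me_N,me_N}^{-1}(f-\mathscr{E}u)\in L^2(B^+_{1/2})$, giving $u\in H^{2m}(B^+_{1/2})$ with the stated estimate.

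\textbf{Main obstacle.} The delicate point is the recovery of the vertical derivatives in the inductive step — specifically, passing from $\partial_N^2(\mathscr{D}u)\in L^2$ to $\mathscr{D}(\partial_N u)=\mathscr{D}u\,$(up to commuting)$\in L^2$ via Lemma~\ref{inegalitedeplacementmasse}, which requires knowing a priori that $\mathscr{D}(\partial_N u)\in L^2_{\mathrm{loc}}$ away from the flat face (so that the lemma's hypothesis $v\in L^2(Q_\lambda)$ is met) and that it vanishes near $\{x_N<0\}$; this uses that $u\in H^m$ vanishes together with enough of its derivatives on the lower half, so $\mathscr{D}(\partial_N u)=0$ there, plus interior regularity (Theorem~\ref{theoremederegularite}) away from the boundary. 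One must also be careful that the cutoff $\eta$ be supported in $B^+$ so that $\eta^2 D_h D_\varepsilon^\gamma u\in H^m_0(B^+)$ — this is where the restriction $\gamma_N=0$ is essential, vertical translations not being admissible — and that all the ball-shrinkings and the cube-vs-ball geometry in applying Lemma~\ref{inegalitedeplacementmasse} be arranged consistently, as in the interior proof. The rest is the same bookkeeping (Leibniz, ellipticity, Young, Poincar\'e, rescaling) already carried out for Theorem~\ref{theoremederegularite}.
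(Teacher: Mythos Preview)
Your outline is correct and matches the paper's proof: horizontal Nirenberg translations give $\partial^\gamma\nabla^m u\in L^2$ for $\gamma_N=0$, then Lemma~\ref{decomposition0} combined with Lemma~\ref{inegalitedeplacementmasse} (using interior regularity to verify $\mathscr{D}(\partial_N u)\in L^2$ on upper slabs) and the inductive hypothesis applied to $\partial_N u$ yield $u\in H^{2m-1}$, and finally the equation recovers $\partial_N^{2m}u$. Two small corrections: the cutoff $\eta$ should be supported in the full ball $B$, not $B^+$ (the test function lies in $H^m_0(B^+)$ because $u$, hence its horizontal translates, vanishes on $\{x_N<0\}$, and you need $\eta=1$ across the flat face to get estimates up to the boundary); and in your reduction from $f\in H^\ell$ to $f\in L^2$, horizontal difference quotients alone yield only $\partial^\gamma u\in H^{2m}$ for $\gamma_N=0$, so at each step the missing pure vertical derivative $\partial_N^{2m+k}u$ must again be recovered from the equation, as the paper does in~\eqref{decompB00}.
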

 
 \begin{proof} The starting point of the proof is similar to  the interior case, but there are some differences. 
 
 The   main difference is that the translated functions $D_h u$ are admissible as a test function in $B^{+}(0,1)$ only for horizontal directions, i.e. for $h\in \R e_i$ and $1\leq i\leq N-1$. Indeed,  for $i=N$ the vertical translation would not preserve the boundary condition.


So let us assume first that  $f\in L^2(B^+(0,1))$ (the general case  $f\in H^\ell(B^+(0,1))$ will be treated later). For  $\gamma\in \N^N$ containing only horizontal directions, in other words  such that $\gamma_N=0$, we can argue exactly as in the case of the interior regularity. We do not write again the details, and refer to the proof of Theorem \ref{theoremederegularite}. This reasoning yields that  $\partial^\gamma \nabla^m u \in L^2(B^+(0,1/2))$ for all $|\gamma|\leq m$ such that  $\gamma_N=0$, with the estimate
\begin{eqnarray}\label{estimHor}
 \|\partial^\gamma \nabla^m u\|_{L^2(B^+(0,\frac{1}{2}))}\leq C(\|\nabla^m u\|_{L^{2}(B^+(0,1))} + \|f\|_{L^2(B^+(0,1)}),
 \end{eqnarray}
 where $C>0$ is a constant that depends  on    $N$,  $\max_{\alpha,\beta} |a_{\alpha,\beta}|$, and $m$. Actually,   the proof of Theorem \ref{theoremederegularite} would give $\|u\|_{H^m(B^+(0,1))}$ on the right-hand side but thanks to the Poincar\'e inequality (Lemma \ref{poincare}) and using  that $u=0$  on $B\cap \{x_N<0\}$ we can bound 
 $$\|u\|_{H^m(B^+(0,1))} \leq C\|\nabla^m u\|_{L^{2}(B^+(0,1))},$$
 from which we get \eqref{estimHor}.

Now to prove the theorem we want to recover all  the derivatives in the vertical direction, up to $2m$-order. For this purpose we will argue by induction.

\quad \textit{Step 1.} We suppose that $m=1$, then  we just proved that $\partial_i\partial_j u \in L^2(B^+(0,1/2))$ for all $i \in \llbracket 1, N-1 \rrbracket$ and $j \in \llbracket 1, N \rrbracket$. The operator $\mathscr{A}$ is of the form
\begin{align*}
    \mathscr{A}= -\sum_{i=1}^N \sum_{j=1}^N a_{i,j}\partial_i \partial_j,
\end{align*}
and by ellipticity we know that $a_{N,N}\not = 0$. From the identity $ \mathscr{A}u = f \in L^2(B^+(0,1))$ then
\begin{align*}
  a_{N,N} \partial_N^2u =  -f + \sum_{(i,j)\not = (N,N)}a_{i,j}\partial_i \partial_j u \in L^2(B^+(0,1/2)),
\end{align*}
and we conclude that $u \in H^2(B^+(0,1/2))$ with
 $$\|u\|_{H^{2}(B^+(0,\frac{1}{2}))}\leq C(\|\nabla u\|_{L^{2}(B^+(0,1))} + \|f\|_{L^2(B^+(0,1)}).$$

 This proves the desired conclusion for the particular  case when $m=1$.
 
 \medskip

\quad  \textit{Step 2.} We assume that $m\geq 2$  and we suppose that the theorem is true for operators of order $2(m-1)$. Let us prove that it is still true for operator of order $2m$. For that purpose we decompose $\mathscr{A}$ as in the statement of Lemma \ref{decomposition0} :
\begin{eqnarray}
 \mathscr{A}&=&(-1)^m\sum_{|\alpha|=\vert \beta \vert = m} a_{\alpha,\beta}\partial^\beta \partial^\alpha\notag \\
 &=&(-1)^m\sum_{\substack{|\alpha|=\vert \beta \vert  = m \\ \alpha_N=0 \text{ or } \beta_N=0}} a_{\alpha,\beta} \partial^\beta\partial^\alpha+(-1)^m\sum_{\substack{|\alpha|=\vert \beta \vert = m \\ \alpha_N>0 \text{ and  } \beta_N>0}}  a_{\alpha,\beta} \partial^\beta \partial^\alpha. \notag 
 \end{eqnarray}
In other words $  \mathscr{A}=      \mathscr{B} +    \mathscr{C}$ with

$$\mathscr{B}:= (-1)^m\sum_{\substack{|\alpha|=\vert \beta \vert = m \\ \alpha_N=0 \text{ or } \beta_N=0}} a_{\alpha,\beta} \partial^\beta\partial^\alpha$$

and 

$$ \mathscr{C}:= (-1)^m\sum_{\substack{|\alpha|=\vert \beta \vert = m \\ \alpha_N>0 \text{ and  } \beta_N>0}} a_{\alpha,\beta} \partial^\beta\partial^\alpha.$$

From the first part of the proof, we already know that $\mathscr{B}(u)\in L^2(B^+(0,1/2))$ because  it contains at most $m$ derivatives in the vertical direction.

Now we consider the function  $w:=-\partial_N^2 u$ and we notice that  $\mathscr{C}$ is an operator that acts as an operator of  $2m-2$ order on the function $w$.  Indeed,
$$ \mathscr{C}(u)= (-1)^m\sum_{\substack{|\alpha|=\vert \beta \vert = m \\ \alpha_N>0 \text{ and  } \beta_N>0}} a_{\alpha,\beta} \partial^\beta\partial^\alpha u=(-1)^m \sum_{|\alpha'|=\vert \beta' \vert = m-1 } a_{\alpha'+e_N,\beta'+e_N} \partial^\beta \partial^\alpha  \partial^{2}_N u=\mathscr{D}(w),$$
where $\mathscr{D}$ is an operator of order $2m-2$. Furthermore, thanks to Lemma \ref{decomposition}, we know that   $\mathscr{D}$ is elliptic.

 Since $u= H^m(B(0,1))$, it follows that $w\in H^{m-2}(B(0,1))$. Moreover, since $u=0$ on the open set $B(0,1)\cap \{x_N<0\}$, the same still holds true for $w$.     Then we can write
 $$\mathscr{D}(w)=\mathscr{A}(u)- \mathscr{C}(u)= f - \mathscr{C}(u) \in L^2(B(0,1)).$$
Since $\mathscr{D}$ is of order $2m-2$, we would like to deduce from the induction hypothesis that $w \in H^{2m-2}(B^+(0,1))$. But unfortunately there is a gap of regularity here, because we only know that $w\in H^{m-2}(B(0,1))$ and to use the induction hypothesis we would need that  $w\in H^{m-1}(B(0,1))$.

To overcome this  problem we will use the Poincar\'e inequality in Lemma \ref{inegalitedeplacementmasse}. Indeed, we can  rewrite $\mathscr{D}(w)$ as follows:
$$\mathscr{D}(w)= -\partial_N \mathscr{D}(\partial_N u).  $$
Since $\mathscr{D}(w) \in L^2(B^+(0,1/2))$, we deduce that the distribution $z:= \mathscr{D}(\partial_N u)$ verifies that 
$$\partial_N z \in L^2(B^+(0,1/2)).$$
 Moreover, thanks to the interior regularity (Theorem \ref{theoremederegularite}), we know that 
 $$z\in L^{2}(B^{+}(0,1/2)\cap \{dist(x,\partial B^+(0,1/2))>\delta_0\}).$$
  By   applying  Lemma \ref{inegalitedeplacementmasse} we deduce that $z\in L^2(B^+(0,1/2))$.  In other words $\mathscr{D}(\partial_N u)\in L^{2}(B^{+}(0,1/2)$ and  now  since $\partial_N u\in H^{m-1}(B)$, with $\partial_N u =0$ on $B\cap \{x_N<0\}$, the induction hypothesis applies. We conclude that  $\partial_N u \in H^{2m-2}(B^+(0,1/4))$. This fact, together with the previous analysis leads to say that $u \in H^{2m-1}(B^+(0,1/4))$ and
 $$\|u\|_{H^{2m-1}(B^+(0,\frac{1}{4}))}\leq C(\|\nabla^m u\|_{L^{2}(B^+(0,1))} + \|f\|_{L^2(B^+(0,1)}).$$
Now we  are still missing one derivative because we need to achieve $H^{2m}$ instead of $H^{2m-1}$. This will be done by use of the following observation: all in all, at this stage of the proof, we have a control on  all the derivatives of the form $\partial^{\alpha}u$ with $|\alpha|=2m$, provided that  $|\alpha_N|\leq 2m-1$. In other words, to conclude that $u \in H^{2m}$ we only miss the last derivative $\partial^{2m}_N u$.  But this term appears in  $\mathscr{A}$ only once. Namely, we have 
\begin{eqnarray}
\mathscr{A}=(-1)^m a_{me_N, me_N} \partial^{2m}_N  + \mathscr{E}  \label{decompNN}
\end{eqnarray} 
where
$$\mathscr{E}:=(-1)^m\sum_{\substack{|\alpha|=\vert \beta \vert = m \\ \alpha_N<m \text{ or } \beta_N<m}} a_{\alpha,\beta} \partial^\beta\partial^\alpha.$$
Since $\mathscr{E}$ contains at most $2m-1$ derivatives in the $e_N$ direction, we know that $\mathscr{E}(u)\in L^2(B^+(0,1/4))$. Moreover, by ellipticity of $\mathscr{A}$ we deduce that the coefficient $a_{me_N, me_N}\not =0$, and therefore, from \eqref{decompNN} we get  $\partial^{2m}_N u \in L^2(B^+(0,1/4))$. In conclusion we have obtained,
 $$\|u\|_{H^{2m}(B^+(0,\frac{1}{4}))}\leq C(\|\nabla^m u\|_{L^{2}(B^+(0,1))} + \|f\|_{L^2(B^+(0,1)}).$$
So follows the claim, with a radius $1/4$ instead of $1/2$. After $m$ application of the inductive argument we arrive to the conclusion of the theorem in $B(0,(1/2)^m)$. Of course the radius $(1/2)^m$ has no importance and one could easily modify the proof in order to arrive in $B(0,1/2)$ at the end, up to change the constants.  This finishes the proof of the theorem, in the case when $f\in L^2(B^+(0,1))$.

We finally  assume now that $f \in H^\ell(B^+(0,1))$ and we want to get some further regularity. We cannot simply take a derivative of the equation as in the interior case, because it would not preserve the Dirichlet boundary condition. Instead, we will use discrete derivatives again. Consider an horizontal direction  $h=\varepsilon e_i$ with $1\leq i\leq N-1$ and notice that $D_hu$ is a solution with $D_{-h}f$ as a second member. In other words, 
\begin{align}
         \int_{B}A\nabla^m D_hu \cdot \nabla^m \varphi \;dx =
          -\int_{B_k} D_{-h}f  \varphi \; dx, \quad \textnormal{for every } \varphi \in H^m_0(B_k). \label{weakStart2}
     \end{align}
Since $f\in H^1(B^+(0,1))$, we know that $D_{-h}f \in L^2$, uniformly in $h$. Applying all the reasoning as before to the function $D_hu$ leads to the estimate:
    $$\|D_h u\|_{H^{2m}(B^+(0,\frac{1}{2}))}\leq C(\|\nabla^m D_h u\|_{L^{2}(B^+(0,1))} + \|D_{-h}f\|_{L^2(B^+(0,1)}),$$
and this actually implies 
    $$\|\partial_i u\|_{H^{2m}(B^+(0,\frac{1}{2}))}\leq C(\|\nabla^m u\|_{L^{2}(B^+(0,1))} + \|\partial_i f\|_{L^2(B^+(0,1)}).$$
The above works for all $1\leq i \leq N-1$. In other words, to prove that $u\in H^{2m+1}$ we only miss the derivative $\partial^{2m+1}_Nu$. 
But now we come back to the decomposition already used in \eqref{decompNN},
\begin{eqnarray}
\mathscr{A}=(-1)^m a_{me_N, me_N} \partial^{2m}_N  + \mathscr{E} \notag
\end{eqnarray} 
where $\mathscr{E}$ contains at most $2m-1$ derivatives in the $e_N$ direction.  Taking a derivative in the direction $e_N$ yields,
 \begin{eqnarray}
(-1)^m a_{me_N, me_N} \partial^{2m+1}_N u   = \partial_N f- \partial_N\mathscr{E}(u),\label{decompB00}
\end{eqnarray} 
and since the second member lies in $L^2(B^+(0,1))$ we deduce that $\partial^{2m+1}_N u\in L^2$. This proves that $u\in H^{2m+1}(B^+(0,1/2))$ and actually,
   $$\| u\|_{H^{2m+1}(B^+(0,\frac{1}{2}))}\leq C(\|\nabla^m u\|_{L^{2}(B^+(0,1))} + \| f\|_{H^1(B^+(0,1))}).$$
By a successive application of this reasoning in a sequence of balls $(B_k)$ we finally arrive to the conclusion that 
 $$\| u\|_{H^{2m+\ell}(B^+(0,\frac{1}{2}))}\leq C(\|\nabla^m u\|_{L^{2}(B^+(0,1))} + \| f\|_{H^\ell(B^+(0,1))}),$$
 which finishes the proof of the theorem.
\end{proof}

The following corollary will be used in the next section.

 \begin{corollary}[Regularity with  flat boundary and $f=0$]\label{theoremederegularite3}
    Let $m\in \N^*$. Let $\mathscr{A}$ be an  operator of order $2m$ of the form \eqref{ellipticOperator} and satisfying the ellipticity condition \eqref{ellipticPartial}. Let  $u \in H^m(B(0,R))$ a weak solution for $ \mathscr{A}u=0$ in $B^+(0,R)$, and such that $u=0$ in $B(0,R)\cap \{x_N<0\}$.  Then $\nabla^m u \in L^\infty(B^+(0,R/2))$ and
    $$\|\nabla^m u\|_{L^\infty(B^+(0,\frac{R}{2}))}\leq C_0\|\nabla^m u\|_{L^{2}(B^+(0,R))},$$
 where $C_0>0$ is a constant that depends  on    $N$,  $\max_{\alpha,\beta} |a_{\alpha,\beta}|$,  and $m$. 
   \end{corollary}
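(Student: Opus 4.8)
The plan is to deduce this directly from the flat-boundary regularity theorem (Theorem~\ref{theoremederegularite2}) together with Sobolev embedding, exactly as Corollary~\ref{theoremederegularite3INT} is deduced from Theorem~\ref{theoremederegularite} in the interior. First I would reduce to the case $R=1$ by scaling: set $v(x):=u(Rx)$, which belongs to $H^m(B(0,1))$. Since $\mathscr{A}$ has constant coefficients and is homogeneous of degree $2m$, one has $\mathscr{A}v(x)=R^{2m}(\mathscr{A}u)(Rx)=0$ in $B^+(0,1)$, and $v=0$ a.e.\ in $B(0,1)\cap\{x_N<0\}$ because $u=0$ a.e.\ in $B(0,R)\cap\{x_N<0\}$. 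A change of variables shows that the $L^\infty$ and $L^2$ norms of $\nabla^m v$ transform by explicit powers of $R$, so that the estimate for $v$ on $B^+(0,1)$ transforms into the desired estimate for $u$ on $B^+(0,R)$; thus it suffices to treat $R=1$.

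For $R=1$, since $f\equiv 0$ we have $f\in H^\ell(B^+(0,1))$ for \emph{every} $\ell\in\N$, so Theorem~\ref{theoremederegularite2} applies and gives $u\in H^{2m+\ell}(B^+(0,1/2))$ together with
$$\|u\|_{H^{2m+\ell}(B^+(0,\frac12))}\leq C\,\|\nabla^m u\|_{L^2(B^+(0,1))},$$
with $C$ depending only on $N$, $\max_{\alpha,\beta}|a_{\alpha,\beta}|$, $\ell$, $m$. Now I would fix $\ell$ large enough that $2(m+\ell)>N$. Then $\nabla^m u\in H^{m+\ell}(B^+(0,1/2))$, and since the half-ball $B^+(0,1/2)$ is a bounded Lipschitz (hence Sobolev extension) domain, the embedding $H^{m+\ell}(B^+(0,1/2))\hookrightarrow L^\infty(B^+(0,1/2))$ yields
$$\|\nabla^m u\|_{L^\infty(B^+(0,\frac12))}\leq C\|\nabla^m u\|_{H^{m+\ell}(B^+(0,\frac12))}\leq C\|u\|_{H^{2m+\ell}(B^+(0,\frac12))}\leq C_0\,\|\nabla^m u\|_{L^2(B^+(0,1))},$$
which is the claim for $R=1$; combining with the scaling step above finishes the proof.

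I do not expect any real obstacle here: this statement is an essentially immediate corollary of Theorem~\ref{theoremederegularite2}. In fact it is slightly \emph{simpler} than its interior counterpart Corollary~\ref{theoremederegularite3INT}, because there one must first subtract suitable polynomials and invoke the Poincar\'e inequality to pass from the inhomogeneous norm $\|u\|_{H^m}$ to the homogeneous norm $\|\nabla^m u\|_{L^2}$, whereas here the homogeneous norm $\|\nabla^m u\|_{L^2(B^+(0,1))}$ already appears directly on the right-hand side of Theorem~\ref{theoremederegularite2} — precisely because the Dirichlet condition on the flat part $\{x_N=0\}$ allowed Lemma~\ref{poincare} to be used inside its proof. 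The only mild points to verify are that the rescaling preserves both the equation and the vanishing of $u$ on $\{x_N<0\}$, and the bookkeeping of the powers of $R$ in the scaling argument.
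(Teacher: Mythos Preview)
Your proposal is correct and follows exactly the paper's own argument: rescale to $R=1$, apply Theorem~\ref{theoremederegularite2} with $f=0$ to get $u\in H^{2m+\ell}(B^+(0,1/2))$ for every $\ell$, and then use Sobolev embedding to obtain the $L^\infty$ bound on $\nabla^m u$. Your remark that no polynomial subtraction is needed here (unlike in Corollary~\ref{theoremederegularite3INT}) because $\|\nabla^m u\|_{L^2}$ already appears on the right-hand side of Theorem~\ref{theoremederegularite2} is also exactly the point the paper makes.
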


\begin{proof}  Theorem \ref{theoremederegularite2}, applied to the rescaled function $u(Rx)$,  says that $\nabla^m u\in H^\ell (B^+(0,R/2))$  for all $\ell\in \N$ because here $f=0$. Then the Sobolev embedding theorem that says that, provided $2\ell>N$, then $H^\ell \hookrightarrow \mathscr{C}^{0,\alpha}$, and in particular $\nabla^m u\in L^\infty$. We also directly get the   control on the norms because the norms that appear in the statement of Theorem \ref{theoremederegularite2} are homogenous after scaling.
\end{proof}

\section{Energy decay for a Reifenberg-flat boundary}

In this section we prove an energy decay property for the solution of a Dirichlet problem in a Reifenberg-flat domain. We begin with the case of a flat boundary, and then we argue by compactness to transfer this decay to a Reifenberg-flat boundary which is  ``sufficiently flat".   As before we consider an operator of order $2m$ of the form \eqref{ellipticOperator}.  
We start with a decay property at the boundary, in the case when $f=0$.

  \begin{proposition}[\textnormal{Decay for a poly-harmonic function near a Reifenberg-flat boundary}]\label{propdecroissanceenergiedebut}
    Let $a \in (0,1/4)$, $b>0$, $r_0 \in (0, 1]$  be given. Then there exists $\varepsilon_0 \in (0, 1)$ such that for every  $(\varepsilon_0,r_0)-$Reifenberg flat domain $\Omega$, $x \in \partial \Omega$, $r\leq r_0$, whenever $u \in  H^m_{0}(\Omega)$ is a weak solution of $\mathscr{A}(u)=0$  in $\Omega\cap B(x,r)$ then
    \begin{align}\label{propdecroissanceenergiedebutformule}
          \int_{B(x,ar) }\vert \nabla^m u \vert^2 dx \leq  C a^{N-b} \int_{B(x,r)}\vert \nabla^m u \vert^2 dx,
    \end{align}
    where $C=C_0^2\omega_N$, and $C_0>0$ is the constant of Corollary  \ref{theoremederegularite3}. 
  \end{proposition}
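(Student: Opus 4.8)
The plan is to prove \eqref{propdecroissanceenergiedebutformule} by a compactness/contradiction argument, reducing the Reifenberg-flat case to the flat-boundary case already handled in Corollary \ref{theoremederegularite3}. First I would observe that, after translating and rescaling so that $x=0$ and $r=1$, it suffices to show: for every $b>0$ and $a\in(0,1/4)$ there is $\varepsilon_0>0$ such that every weak solution $u\in H^m_0(\Omega)$ of $\mathscr{A}u=0$ in $\Omega\cap B(0,1)$, with $\Omega$ an $(\varepsilon_0,r_0)$-Reifenberg-flat domain and $0\in\partial\Omega$, satisfies $\int_{B_a}|\nabla^m u|^2\le Ca^{N-b}\int_{B_1}|\nabla^m u|^2$ with $C=C_0^2\omega_N$. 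Normalizing $\int_{B_1}|\nabla^m u|^2=1$, suppose for contradiction that for some fixed $a,b$ there is a sequence $\varepsilon_n\to 0$, domains $\Omega_n$ that are $(\varepsilon_n,r_0)$-Reifenberg-flat with $0\in\partial\Omega_n$, and solutions $u_n\in H^m_0(\Omega_n)$ of $\mathscr{A}u_n=0$ in $\Omega_n\cap B_1$, with $\int_{B_1}|\nabla^m u_n|^2=1$ but $\int_{B_a}|\nabla^m u_n|^2 > Ca^{N-b}$.

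Next I would extract limits. By the Poincaré inequality (Lemma \ref{poincare}, applied after noting the Reifenberg-flatness gives $u_n=0$ on a large portion of $B_1\setminus\Omega_n$ near $0$), the sequence $u_n$ is bounded in $H^m(B_1)$, so up to a subsequence $u_n\rightharpoonup u_\infty$ weakly in $H^m(B_1)$ and strongly in $H^{m-1}_{\rm loc}$. Since $\varepsilon_n\to 0$, the domains $\Omega_n\cap B_1$ converge (after a rotation $R_{x,n}$ which we may take to converge to the identity, up to a further subsequence) to the half-ball $B^+(0,1)$, and the Mosco-convergence theorem for higher-order operators from \cite{GLM23} guarantees that $u_\infty\in H^m(B_1)$ with $u_\infty=0$ a.e. in $B(0,1)\cap\{x_N<0\}$ and that $u_\infty$ is a weak solution of $\mathscr{A}u_\infty=0$ in $B^+(0,1)$. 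Crucially, Mosco convergence also yields convergence of the energies on compact subsets: $\int_{B_\rho}|\nabla^m u_n|^2\to\int_{B_\rho}|\nabla^m u_\infty|^2$ for $\rho<1$ (the $\liminf$ inequality from weak convergence together with the recovery-sequence/variational characterization gives the reverse, so strong $H^m_{\rm loc}$ convergence of $u_n$ to $u_\infty$).

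Then I would pass to the limit in the contradiction hypothesis. On one hand, strong $H^m_{\rm loc}$ convergence gives $\int_{B_a}|\nabla^m u_\infty|^2 \ge Ca^{N-b}$. On the other hand, Corollary \ref{theoremederegularite3} applied to $u_\infty$ on $B^+(0,1)$ (with $f=0$) gives $\|\nabla^m u_\infty\|_{L^\infty(B^+(0,1/2))}\le C_0\|\nabla^m u_\infty\|_{L^2(B^+(0,1))}\le C_0$, hence $\int_{B_a}|\nabla^m u_\infty|^2\le \omega_N a^N \|\nabla^m u_\infty\|_{L^\infty(B_{1/2})}^2\le C_0^2\omega_N a^N = Ca^N$. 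Since $a<1$ and $b>0$ we have $a^N < a^{N-b}$, so $Ca^N < Ca^{N-b}$, contradicting the previous inequality. This produces the desired $\varepsilon_0$ and finishes the proof.

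The main obstacle I anticipate is the careful justification of the strong $H^m_{\rm loc}$ convergence of $u_n$ to $u_\infty$ and the identification of the limit as a solution on $B^+(0,1)$ with the correct Dirichlet condition; this is exactly where the Mosco-convergence machinery of \cite{GLM23} must be invoked, and one must be attentive to the converging rotations $R_{x,n}$ and to the fact that the $u_n$ are extended by zero outside $\Omega_n$ (so that the zero sets pass to the limit correctly). A secondary technical point is ensuring the normalization and the strict inequality $a^N<a^{N-b}$ are set up so the contradiction is clean; this is routine once the compactness step is in place.
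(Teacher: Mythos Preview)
Your proposal is correct and follows essentially the same contradiction/compactness strategy as the paper: normalize, extract a weak $H^m$-limit, identify it as a solution of $\mathscr{A}u_\infty=0$ on the half-ball with zero trace on $\{x_N<0\}$, upgrade to strong $H^m_{\rm loc}$ convergence, and then contradict Corollary~\ref{theoremederegularite3}. The only notable difference is in the presentation of the strong-convergence step: you invoke ``Mosco convergence from \cite{GLM23}'' as a packaged result, whereas the paper carries out the $\limsup$ inequality by hand, building an explicit competitor $z_{\tilde\rho,i}=(1-\eta_{\tilde\rho})v_n+\eta_{\tilde\rho}w_i$ where $w_i\in C^\infty_c(B^+(0,1))$ approximates the cut-off limit $\xi v$ (the reference \cite{GLM23} is used only for the stability fact that $\xi v\in H^m_0(B^+(0,1))$, i.e.\ the existence of such $w_i$). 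Your parenthetical ``recovery-sequence/variational characterization'' is exactly this argument, so the content is the same; just be aware that a direct appeal to Mosco convergence of the spaces $H^m_0(\Omega_n\cap B_1)$ does not literally apply, since the $v_n$ are not global Dirichlet minimizers on $\Omega_n\cap B_1$ but local solutions with free data on $\partial B_1$ --- the competitor argument is what bridges this gap.
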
%
\begin{proof}
   We argue by contradiction. If the Proposition is false then there exists $a \in (0,1/4)$, $b>0$, $r_0\in (0,1)$ such that for all $n\in \N$, there exists an open set $\Omega_n$ which is $(1/(n+1),r_0)$-Reifenberg-flat and satisfies the following. There exists $x_n\in \partial \Omega_n$, $r_n\leq r_0$ and $u_n \in H^m_{0}(\Omega_n)$ such that $\mathscr{A}(u)=0$  in $\Omega\cap B(x_n,r_n)$  and 
     \begin{align}\label{energiedecroiboule}
        Ca^{N-b}\int_{ B(x_n,r_n)}\vert \nabla^m u_n \vert^2 dx <  \int_{ B(x_n,ar_n)}\vert \nabla^m u_n \vert^2 dx.
    \end{align}
By means of a translation and rotation, we reduce to the case $x_n =0$ for all $n \in \N$ and the hyperplan induced by the definition of Reifenberg flat domain of $\Omega_n$ is $\mathscr{P}_0:=\left\{x_N=0\right\}$.

We consider the normalized sequence 
\begin{align*}
v_n(x) :=r_n^{\frac{N}{2}-2m}\Vert \nabla^m u_n(x) \Vert_{L^2(\Omega_n \cap B(0,r_n))}^{-1}  u_n(r_n x).
\end{align*}
The normalisation  is fixed in such a way that 
$$\int_{B_1}|\nabla^m v_n(x)|^2 \;dx =1.$$
To lighten the notation, we will use the notation  $\Omega_n$  for $\frac{1}{r_n}\Omega_n$. The functions $v_n$ are still solutions of $\mathscr{A}(v_n)=0$ in $\Omega_n\cap B(0,1)$, we have $v_n=0$ a.e. in $B(0,1)\setminus \Omega_n$ and the sequence $(v_n)_{n \in \N}$ is bounded in $H^m(B(0,1))$. Moreover,  \eqref{energiedecroiboule} becomes
 \begin{align}\label{energiedecroiboule0}
        Ca^{N-b}<  \int_{ B(0,a)}\vert \nabla^m v_n \vert^2 dx,
    \end{align}
and $\Omega_n\cap B(0,1)$ converges to $B^+(0,1)$ for the Hausdorff distance, in the sense that 
$$d_\mathscr{H}(\partial \Omega_n\cap B(0,1),  \{x_N=0\}\cap B(0,1))\to 0.$$
 The idea is to pass to the limit and obtain a contradiction thanks to the regularity of poly-harmonic functions at a flat boundary (Theorem \ref{theoremederegularite2}).

By weak compactness, we can extract a subsequence, still denoted by ${v}_{n}$ which weakly converges to $v \in H^m(B(0,1))$ and such that $(\nabla^i {v}_{n})_{n \in \N}$ strongly converges to $\nabla^i v$ in $L^2(B(0,1))$ for all $i \in \llbracket 0, m-1 \rrbracket$. The main objective is to show the following three facts:
\begin{itemize}
    \item \textit{Claim $A_1$.} The limit function $v$ satisfies $v=0$ a.e. in $B(0,1)\cap \{x_N<0\}$.
 \item \textit{Claim $A_2$.} The limit function $v$ is a weak solution of $\mathscr{A}(v)=0$ in $B^+(0,1)$.
 \item \textit{Claim $A_3$.} The sequence $(v_{n})_{n\in \N}$ strongly converges to $v$ in $H^m(B(0,1/2))$.
 \end{itemize}

Let us finish the proof by assuming that $A_1$, $A_2$ and $A_3$ holds. Passing to the limit in  inequality \eqref{energiedecroiboule0} we obtain,  
 \begin{align}\label{energiedecroiboule00}
        Ca^{N-b}\leq  \int_{ B(0,a)}\vert \nabla^m v \vert^2 dx.
    \end{align}
 On the other hand,   since $\mathscr{A}{v}=0$ in $B^+(0,1)$ and $v=0$ a.e. in $B(0,1)\cap\{x_N<0\}$, we can apply  Corollary  \ref{theoremederegularite3} to $v$ which says  that  for all $a\leq 1/2$,
  $$\|\nabla^m v\|_{L^\infty(B(0,a))}\leq C_0\|\nabla^m v\|_{L^{2}(B(0,1))}.$$
  In particular,
  \begin{eqnarray}
  \int_{B(0,a)} |\nabla^m v|^2 \; dx& \leq & \|\nabla^m v\|_{L^\infty(B^+(0,a))}^2 \omega_N a^N \notag\\
  &\leq & C_0^2\omega_N a^N \int_{B(0,1)}|\nabla^m v|^2  \; dx  =  C_0^2\omega_N a^N. \label{contre1}
  \end{eqnarray}
But since  $C=C_0^2\omega_N$, using \eqref{energiedecroiboule00} we arrive to
$$   C_0^2\omega_N a^{N-b}\leq C_0^2\omega_N a^N ,$$
which is a  contradiction because     $a<1$ .

  So to finish the proof, we are left to prove   $A_1$, $A_2$ and $A_3$. Actually, the claims $A_1$ and $A_2$ are easy to prove. Only $A_3$ is more delicate.
  
\quad\textit{Proof of  $A_1$.}  This fact directly follows from the Hausdorff convergence of $\partial \Omega_n\cap B(0,1)$ to $\{x_N=0\}\cap B(0,1)$. Indeed, for any $x_0 \in B(0,1)\cap \{x_N<0\}$, there exists a ball $B(x_0,\varepsilon)$ such that $\overline{B}(x_0,\varepsilon)\subset  B(0,1)\cap \{x_N<0\}$. From the Hausdorff convergence of $\Omega_n$ to $B^+(0,1)$, we deduce that for $n$ large enough, $B(x_0,\varepsilon) \subset \Omega_n^c \cap B(0,1)$. But then  we know by assumption that $v_n=0$ on $B(x_0,\varepsilon)$. Since $v_n\to v$ strongly in $L^2$, we deduce that $v=0$ a.e. on $B(x_0,\varepsilon)$, and since $x_0$ is arbitrary, we conclude that $v=0$ a.e. on $B(0,1)\cap \{x_N<0\}$, and the claim $A_1$ follows.

\quad\textit{Proof of  $A_2$.}  Since  $\partial \Omega_n\cap B(0,1)$ converges  to $\{x_N=0\}\cap B(0,1)$, we know that for any compact set $K\subset B^+(0,1)$, there exist $N_0$ large enough such that $K\subset \Omega_n$ for all $n\geq N_0$. But then for any test function $\varphi$ supported on $K$ we have
 $$\int_{B(0,1)} A\nabla^m v_n \cdot \nabla^m \varphi \;dx = 0.$$
Then from the weak convergence of $v_n$ to $v$ in $H^m$ we can pass to the limit to obtain
 $$\int_{B(0,1)} A\nabla^m v \cdot \nabla^m \varphi \;dx = 0.$$
Since this holds true for any compact set $K\subset B^+(0,1)$, the claim $A_2$ follows.

\quad\textit{Proof of  $A_3$.}  We show now that the sequence $(v_{n})_{n\in \N}$ strongly converges to $v$ in $H^m(B(0,\rho))$ for a radius $\rho\in(1/2,1)$.
Let $\rho$ be such a radius. By lower semicontinuity with respect to weak convergence we already know that 
\begin{align*}   
 \int_{B(0,\rho)} \vert \nabla^m v \vert^2 dx \leq \underset{n \rightarrow +\infty}{\liminf} \int_{B(0,\rho)}\vert \nabla^m v_n \vert^2 dx.
\end{align*}
To get the strong convergence in $B(0,\rho)$, it is enough to prove the converse, namely 
\begin{align*}
\underset{n \longrightarrow +\infty}{\limsup}   \int_{B(0,\rho)}\vert \nabla^mv_n\vert^2 dx \leq  \int_{B(0,\rho)} \vert \nabla^m v \vert^2 dx.
\end{align*}

The idea is to modify $v$ in order to make it admissible as a test function for $v_n$. 

Let $\xi \in C^\infty_c(B(0,1))$ be a smooth cut-off function such  that $\xi=1$ on $B(0,\rho)$. Since the function $v$ belongs to $H^m(B(0,1))$ and $v=0$ a.e. in $B(0,1)\cap \{x_N<0\}$, it follows from the theory of stable domains (see \cite{GLM23}) that $\xi v$ belongs to $H^m_0(B^+(0,1))$. We deduce that there exist a sequence $(w_i)_{i\in \N}$ in $\mathscr{C}^\infty_c(B^+(0,1))$ which strongly converges to $\xi v$ in $H^m(\R^N)$. The main point is that for $i$ being fixed, we know from the Hausdorff convergence of $\Omega_n$ that for  all $n$ large enough, $w_i\in H^m_0(\Omega_n \cap B(0,1))$.

Now we pick $\tilde{\rho} \in (1/2, \rho)$ and consider another cut-off function $\eta_{\tilde{\rho} } \in \mathscr{C}^\infty_c(B(0,\rho))$ such that $\eta_{\tilde{\rho} } =1$ on $B(0,\tilde{\rho} )$ and
\begin{align*}
    \vert \nabla^k \eta_{\tilde{\rho} } \vert \leq C(\rho-\tilde{\rho} )^{-k}, \quad 1 \leq k \leq m,
\end{align*}
for a universal constant $C>0$.

Since $v_n$ is a weak solution for $\mathscr{A}(v_n)=0$ in $B(0,\rho)\cap \Omega_n$,  it is a local energy minimizer and $z_{\tilde{\rho},i}:=(1-\eta_{\tilde{\rho}} )v_n+ \eta_{\tilde{\rho}}w_i$  is a competitor. We deduce that 
$$\int_{B(0,1)}A\nabla^m v_n \cdot \nabla^m v_n \;dx \leq  \int_{B(0,1)}A \nabla^m z_{\tilde{\rho},i} \cdot \nabla^m z_{\tilde{\rho},i} \;dx.$$

\begin{align}\label{inevnetzadmissible}
    \int_{B(0,\rho)}\vert \nabla^m v_n \vert^2 dx \leq \int_{B(0,\rho)} \vert \nabla^m z_{\tilde{\rho},i} \vert^2 dx.
\end{align}
 Next, a computation reveals that 
\begin{align*}
    \vert \nabla^m z_{\tilde{\rho} ,i}\vert^2 &= \sum_{\vert\alpha_1\vert=m} \left( \partial^{\alpha_1}(v_n + \eta_{\tilde{\rho} }(w_i - v_n))\right)^2 \\
    &=\sum_{\vert\alpha_1\vert=m}\left(\partial^{\alpha_1}v_n+ \sum_{\alpha_1 = \alpha_2 + \alpha_3}\frac{\alpha_1!}{\alpha_2! \alpha_3!}\partial^{\alpha_2} \eta_{\tilde{\rho} }\partial^{\alpha_3}(w_i -v_n)\right)^2 \\
      &=\sum_{\vert\alpha_1\vert=m}\left(\partial^{\alpha_1}v_n 
      +\eta_{\tilde{\rho} }\partial^{\alpha_1}(w_i -v_n)
      +\sum_{\substack{\alpha_1 = \alpha_2 + \alpha_3 \\ \vert \alpha_2  \vert>0}}\frac{\alpha_1!}{\alpha_2! \alpha_3!}\partial^{\alpha_2} \eta_{\tilde{\rho} }\partial^{\alpha_3}(w_i -v_n)\right)^2 \\
    &= \sum_{\vert\alpha_1\vert=m}\left(\eta_{\tilde{\rho} } \partial^{\alpha_1} w_i + (1-\eta_{\tilde{\rho} })\partial^{\alpha_1}v_n + \sum_{\substack{\alpha_1 = \alpha_2 + \alpha_3 \\ \vert \alpha_2  \vert>0}}\frac{\alpha_1!}{\alpha_2! \alpha_3!}\partial^{\alpha_2} \eta_{\tilde{\rho} }\partial^{\alpha_3}(w_i -v_n)\right)^2.
    \end{align*}
   By convexity, we get
\begin{align*}
     \vert \nabla^m z_{\tilde{\rho},i}\vert^2 &\leq \eta_{\tilde{\rho} }\vert\nabla^m w_i\vert^2 + (1-\eta_{\tilde{\rho} })\vert\nabla^mv_n\vert^2 + C(m)\sum_{k=1}^{m}\frac{1}{(\rho-\tilde{\rho} )^{2k}}\vert \nabla^{m-k}(w_i - v_n)\vert^2 \\
    & \quad \quad+ C(m)\sum_{k=1}^{m}\frac{1}{(\rho-\tilde{\rho} )^{2k}}\left|\left[\eta_{\tilde{\rho} } \nabla^m w_i + (1-\eta_{\tilde{\rho} })\nabla^mv_n \right]: \left[ \nabla^{m-k}(w_i - v_n)\right]\right|.
\end{align*}
Using \eqref{inevnetzadmissible}, one has
\begin{align*}
     \int_{B(0,\rho)} \eta_{\tilde{\rho}} \vert \nabla^mv_n \vert^2 dx &\leq \int_{B(0,\rho)}\eta_{\tilde{\rho} }  \vert  \nabla^m w_i \vert^2 dx + C(m)\sum_{k=1}^{m}\frac{1}{(\rho-\tilde{\rho} )^{2k}} \int_{B(0,\rho)} \vert \nabla^{m-k} (w_i - v_n)\vert^2 dx\\
    &+ C(m)\sum_{k=1}^{m}\frac{1}{(\rho-\tilde{\rho} )^{2k}}\int_{B(0,\rho)} \left| \left[\eta_{\tilde{\rho} } \nabla^m w_i + (1-\eta_{\tilde{\rho} })\nabla^mv_n \right]: \left[ \nabla^{m-k}(w_i - v_n)\right] \right|\;dx.
\end{align*}
Remind that $(\nabla^k v_n)_{n\in \N}$ strongly converge in $L^2(B(0,1))$ for all $k \in \llbracket 0, m-1 \rrbracket$, thus taking the limsup as $n \to + \infty$ in the above inequality we get 

\begin{align*}
 \limsup_{n\to +\infty}    \int_{B(0,\tilde{\rho})} \vert \nabla^mv_n \vert^2 dx  &\leq   \limsup_{n\to +\infty}    \int_{B(0,\rho)} \eta_{\tilde{\rho}} \vert \nabla^mv_n \vert^2 dx \notag \\
 &\leq \int_{B(0,\rho)}\eta_{\tilde{\rho} }  \vert  \nabla^m w_i \vert^2 dx + C(m)\sum_{k=1}^{m}\frac{1}{(\rho-\tilde{\rho} )^{2k}} \int_{B(0,\rho)} \vert \nabla^{m-k} (w_i - v)\vert^2 dx\\
    &+ C(m)\sum_{k=1}^{m}\frac{1}{(\rho-\tilde{\rho} )^{2k}}\int_{B(0,\rho)} \left[\eta_{\tilde{\rho} } \nabla^m w_i + (1-\eta_{\tilde{\rho} })\nabla^mv \right]: \left[ \nabla^{m-k}(w_i - v)\right] \;dx.
\end{align*}

Then we take the limit as $i \to + \infty$, and we recall that $w_i$ converges strongly in $H^m$ to $\xi v$, and $\xi v$ is equal to $v$ on $B(0,\rho)$. Therefore, all the terms with $w_i-v$ disappear and we arrive to 
\begin{align*}
 \limsup_{n\to +\infty}    \int_{B(0,\tilde{\rho})} \vert \nabla^m v_n \vert^2 dx &\leq  \int_{B(0,\rho)}\eta_{\tilde{\rho} }  \vert  \nabla^m v \vert^2 dx \leq  \int_{B(0,\rho)}   \vert  \nabla^m v \vert^2 dx.
 \end{align*}
Finally, by letting $\rho \to \rho'$ on the right hand side, we get the desired limsup inequality. This together with the liminf inequality and the weak-convergence, proves that $v_n$ converges strongly to $v$ in $H^m(B(0,\tilde{\rho}))$. Since $\tilde{\rho}>1/2$, this achieves the proof of Claim $A_3$, and the Proposition follows. 
\end{proof} 

We are now in position to prove a boundary decay property of a Dirichlet solution in a Reifenberg-flat domain.

\begin{proposition}[Energy decay for a solution at a boundary point]\label{decroissanceenergiereifenbergplat}
    Let $q \geq 2$ be such that $mq\geq N$ if $2m< N$, and $q=2$ otherwise. Let  $\eta \in (0,1)$ be given. Then there exists $\varepsilon_0 \in (0,1) $  such that for all $(\varepsilon_0,r_0)-$Reifenberg flat domain $\Omega$, for all $f\in L^q(\Omega)$, the weak solution $u \in H^m_0(\Omega)$ of $\mathscr{A}(u)=f$ satisfies
    \begin{align*}
        \int_{B(x,r) }\vert \nabla^m u \vert^2 dx \leq C   r^{N- \eta}\Vert f\Vert^2_{L^q(\Omega)}, \quad \forall r\in(0,r_0), \quad x\in \partial \Omega,
    \end{align*}
    where $C:= C(A,N,m,r_0,\eta, \Omega)>0$.
  \end{proposition}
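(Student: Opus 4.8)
The plan is to combine an $\mathscr{A}$-harmonic comparison (replacement) argument with the decay estimate of Proposition~\ref{propdecroissanceenergiedebut} and a hole-filling iteration. Fix $\eta\in(0,1)$, set $b:=\eta/2$, and choose $a\in(0,1/4)$ so small that $C_1 a^{\eta/2}\le 1$, where $C_1$ is the (ellipticity-, $N$- and $m$-dependent) constant appearing in the iteration inequality below; then let $\varepsilon_0=\varepsilon_0(a,b,r_0)$ be the threshold provided by Proposition~\ref{propdecroissanceenergiedebut}. Let $\Omega$ be $(\varepsilon_0,r_0)$-Reifenberg flat, $f\in L^q(\Omega)$, $u\in H^m_0(\Omega)$ the solution of $\mathscr{A}u=f$, and fix $x\in\partial\Omega$ and $r\le r_0$. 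Let $w$ be the minimizer of $v\mapsto\int_{\Omega}A\nabla^m v\cdot\nabla^m v$ over the affine space $\{v\in H^m_0(\Omega):\ v-u\in H^m_0(\Omega\cap B(x,r))\}$. Using the characterization \eqref{netrusov} one checks that $w\in H^m_0(\Omega)$, that $w=u$ outside $B(x,r)$, and that $w$ is a weak solution of $\mathscr{A}w=0$ in $\Omega\cap B(x,r)$; hence Proposition~\ref{propdecroissanceenergiedebut} applies and gives $\int_{B(x,ar)}|\nabla^m w|^2\,dx\le C a^{N-b}\int_{B(x,r)}|\nabla^m w|^2\,dx$.

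Next I would estimate the error $\psi:=u-w\in H^m_0(\Omega\cap B(x,r))$. Subtracting the weak formulations of $u$ and $w$ and testing against $\psi$ gives $\int A\nabla^m\psi\cdot\nabla^m\psi=\int_{\Omega\cap B(x,r)}f\psi$, so by ellipticity and Hölder's inequality $\|\nabla^m\psi\|_{L^2(B(x,r))}^2\le C\|f\|_{L^q(\Omega)}\,\|\psi\|_{L^{q'}(B(x,r))}$ with $1/q+1/q'=1$. Since a Reifenberg-flat domain satisfies the corkscrew condition, $\psi$ (extended by $0$) vanishes on a fixed fraction of $B(x,r)$; iterating the Poincaré inequality yields $\|\nabla^j\psi\|_{L^2(B(x,r))}\le C r^{m-j}\|\nabla^m\psi\|_{L^2(B(x,r))}$ for $0\le j\le m$, and—because the hypothesis $mq\ge N$ (resp. $2m\ge N$ when $q=2$) forces $q'$ to be at or below the Sobolev conjugate of $W^{m,2}$—a scaled Sobolev embedding gives $\|\psi\|_{L^{q'}(B(x,r))}\le C r^{\,m+N/q'-N/2}\|\nabla^m\psi\|_{L^2(B(x,r))}$. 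Combining these bounds produces
$$\int_{B(x,r)}|\nabla^m\psi|^2\,dx\ \le\ C\,r^{\mu}\,\|f\|_{L^q(\Omega)}^2,\qquad \mu:=2m+\frac{2N}{q'}-N,$$
and a direct computation shows that $\mu\ge N$, the inequality $\mu\ge N$ being exactly equivalent to the standing hypothesis $mq\ge N$ (and to $2m\ge N$ in the case $q=2$).

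Now write $\nabla^m u=\nabla^m w+\nabla^m\psi$ and use $(s+t)^2\le 2s^2+2t^2$ together with the two displayed estimates to obtain, for all $r\le r_0$,
$$\int_{B(x,ar)}|\nabla^m u|^2\,dx\ \le\ C_1 a^{N-b}\int_{B(x,r)}|\nabla^m u|^2\,dx\ +\ C_2\,r^{\mu}\,\|f\|_{L^q(\Omega)}^2 .$$
By the choice of $a$ we have $C_1 a^{N-b}\le a^{N-\eta}$, and since $\mu\ge N>N-\eta$ the geometric series generated by iterating this inequality along the radii $a^k r_0$ converges; this yields $\int_{B(x,r)}|\nabla^m u|^2\,dx\le C r^{N-\eta}\big(\int_{B(x,r_0)}|\nabla^m u|^2\,dx+\|f\|_{L^q(\Omega)}^2\big)$ for $r\in(0,r_0)$. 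Finally, testing $\mathscr{A}u=f$ against $u$ and using the Poincaré inequality and Hölder's inequality (recall $q\ge 2$) gives $\|\nabla^m u\|_{L^2(\Omega)}^2\le C\|f\|_{L^2(\Omega)}^2\le C|\Omega|^{1-2/q}\|f\|_{L^q(\Omega)}^2$, which controls the first term and gives the statement with $C=C(A,N,m,r_0,\eta,\Omega)$.

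The main obstacle will be the two quantitative ingredients, not the iteration. First, one must verify carefully that the replacement $w$ genuinely satisfies the hypotheses of Proposition~\ref{propdecroissanceenergiedebut}—in particular that $w\in H^m_0(\Omega)$, which relies on \eqref{netrusov}—and that the Euler–Lagrange identity for $w$ is available as a weak formulation on $\Omega\cap B(x,r)$. Second, and more importantly, one must extract the sharp power $r^{\mu}$ with $\mu\ge N$ in the error estimate: this is precisely where the hypothesis $mq\ge N$ is used, through the Sobolev exponent, and where the measure-density/corkscrew property of Reifenberg-flat domains must be invoked to make the Poincaré and Sobolev constants uniform in $\Omega$ (and in $\varepsilon_0$). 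Once these are in place, the hole-filling step is routine.
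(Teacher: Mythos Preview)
Your argument is correct and follows essentially the same route as the paper: $\mathscr{A}$-harmonic replacement in $\Omega\cap B(x,r)$, Proposition~\ref{propdecroissanceenergiedebut} for the replacement, an error bound for $u-w$ via the equation together with Poincar\'e--Sobolev (yielding the exponent $\mu=N+2m-2N/q\ge N$, which is exactly where $mq\ge N$ enters), and then a geometric iteration at scales $a^k r_0$. The only minor differences are cosmetic---you test the equation for $\psi$ directly where the paper uses a Pythagoras/minimality detour, and your H\"older split $(q,q')$ is equivalent to the paper's $(p',p)$-then-H\"older; also note that since $\psi\in H^m_0(\Omega\cap B(x,r))\subset H^m_0(B(x,r))$, the Poincar\'e and Sobolev constants are already universal on the full ball and no corkscrew argument is needed there.
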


\begin{proof}
    Consider a real $0<b< \eta$ and fix $a\in (0,1)$ satisfying
    \begin{align}
      a  \leq \left( \frac{1}{4C_A \max A} \right)^{\frac{1}{\eta-b}} , \label{defa}
    \end{align}
where $C_A$ is the ellipticity constant of $A$. 
    From Proposition \textit{\ref{propdecroissanceenergiedebut}}, for these choices of $a$ and $b$, there exists $\varepsilon_0\in(0,1)$  such that the decay property \eqref{propdecroissanceenergiedebutformule} holds for any poly-harmonic function.

 Let $u\in H^m_0(\Omega)$ be the weak solution of $\mathscr{A}(u)=f$ in $\Omega$ and let $x\in \partial \Omega$. and for every $k\in \N$, we define $B_k := B(x,a^kr_0)$. Let $v_k$ be the poly-harmonic replacement of $u$ in $B_k$, more precisely $v_k$ is the solution of
 $$\min_{w \text{ s.t. }w-u \in H^m_0(B_k\cap \Omega)} \int_{B_k}A \nabla^m w \cdot \nabla^m w \;dx.$$
 Then Proposition \textit{\ref{propdecroissanceenergiedebut}} applies to $v_k$ and yields
  \begin{align}
          \int_{B_{k+1} }\vert \nabla^m v_k \vert^2 dx \leq  C_1 a^{N-b} \int_{B_k}\vert \nabla^m v_k \vert^2 dx.
    \end{align}

We will prove by induction over $k  \geq k_0$ with $k_0:=\lfloor \frac{N}{\eta} \rfloor +1$ the inequality
    \begin{align}\label{recurrenceaveca}\tag{$H_k$}
        \int_{B_k}\vert \nabla^m u \vert^2 dx \leq C_{I} a^{k(N-\eta)}\Vert f \Vert_{L^q(\Omega)}^2, 
         \end{align}
        for a constant $C_I>0$ that will be fixed later.   
        
         First of all we have, by ellipticity of $A$,
    \begin{align*}
           \int_{B_{k_0} }\vert \nabla^m u \vert^2 dx \leq  \int_{\Omega}\vert \nabla^m u \vert^2 dx \leq C_A  \int_{\Omega} A \nabla^m u \cdot \nabla^m u \;dx = C_A\int_\Omega fu \; dx \leq \Vert f\Vert_{L^2(\Omega)} \Vert u\Vert_{L^2(\Omega)}.
    \end{align*}
    Next by the Poincar\'e inequality in $H^m_0(\Omega)$,
  $$\Vert u\Vert_{L^2(\Omega)} \leq C_P \|\nabla^m u\|_{L^2(\Omega)},$$  
    so finally, 
    \begin{eqnarray}
        \int_{B_{k_0} }\vert \nabla^m u \vert^2 dx \leq C_AC_P^2 \|f\|_{L^2(\Omega)}^2\leq C_AC_P|\Omega|^{1-\frac{2}{q}} \|f\|_{L^q(\Omega)}^2. \label{induction09}
    \end{eqnarray}
Hence, \eqref{recurrenceaveca} holds true for $k_0=\lfloor \frac{N}{\eta} \rfloor +1$ provided that 
$$C_AC_P|\Omega|^{1-\frac{2}{q}} a^{-k_0(N-\eta)} \leq C_I.$$

We assume now that \eqref{recurrenceaveca}  holds for some $k \geq k_0$ and we write
    \begin{align}
    \int_{B_{k+1}}\vert \nabla^m u \vert^2 dx &\leq 2  \int_{B_{k+1}}\vert \nabla^m v_{k} \vert^2 dx + 2 \int_{B_{k+1}}\vert \nabla^m (u-v_{k}) \vert^2 dx  \notag \\
        & \leq 2 a^{N-b} \int_{B_{k}}\vert \nabla^m v_{k} \vert^2 dx + 2 \int_{B_{k+1}}\vert \nabla^m (u-v_{k}) \vert^2 dx \notag \\
        & \leq C_A 2 a^{N- b} \int_{B_{k}} A \nabla^m u \cdot \nabla^m u \; dx + 2 \int_{B_{k}}\vert \nabla^m (u-v_{k}) \vert^2 dx.\notag\\
    & \leq \max(A) C_A 2 a^{N- b} \int_{B_{k}} | \nabla^m u|^2 \; dx + 2C_A \int_{B_{k}} A \nabla^m (u-v_{k}) \cdot \nabla^m (u-v_{k}) \; dx. \label{setapge}
    \end{align}
   The last inequality is a consequence of the fact that   $u$ is a competitor for the minimization problem satisfied by $v_k$. Moreover,   since $u-v_k$ is $A$-orthogonal to $v_k$, denoting by $|v |_A:=Av \cdot v$,  Pythagoras inequality yields
    \begin{align}\label{egalitepytagore11}
        0 \leq \int_{B_{k}}\vert \nabla^m (u-v_{k}) \vert^2_A dx =  \int_{B_{k}}\vert \nabla^m u \vert^2_A dx - \int_{B_{k}}\vert \nabla^m v_{k} \vert^2_A dx.
    \end{align}
    On the other hand, since now $u$ is a minimizer of
    \begin{align*}
        \textnormal{min}\left\{ \frac{1}{2}\int_{B_k} |\nabla^m w|_A dx - \int_{B_k}fw \; dx \; \middle| \; w \in u + H^m_0(\Omega\cap B_k)\right\},
    \end{align*}
and since the function $v_{k}$ is a competitor for this minimising problem we deduce that 
    \begin{align*}
        \int_{B_k}\vert \nabla^m u \vert^2_A dx - 2 \int_{B_k} fu \;dx \leq \int_{B_k}\vert \nabla^m v_{k} \vert^2_A dx - 2\int_{B_k} fv_{k} \;dx,
    \end{align*}
   and from \eqref{egalitepytagore11},
    \begin{align}
            \int_{B_{k}}\vert \nabla^m (u-v_{k}) \vert^2_A \; dx \leq 2 \int_{B_k} f(u-v_{k}) \;dx. \label{avantSobolev}
                \end{align}
  Then we use  the Sobolev embedding theorem  together with the Poincar\'e inequality (Lemma~\ref{poincare}) which   implies $u-v_k \in L^p$ with 
  \begin{eqnarray}
  p=
  \left\{\begin{array}{ll}
  \frac{2N}{N-2m} & \text{ if } 2m<N \\
  \text{ arbitrary large } &  \text{ if } 2m=N\\
  +\infty & \text{ if } 2m>N,
  \end{array}
  \right. \label{defer}
  \end{eqnarray}
and   moreover,  
  $$\| u-v_k\|_{L^p(B_k)}  \leq C_S(r)  \|\nabla^m(u-v_k) \|_{L^2(B_k)} \leq  C_S(p) \sqrt{C_A} \left( \int_{B_k} |\nabla^m(u-v_k)|_A^2 \;dx\right)^{\frac{1}{2}}.$$          
Hence, returning back to \eqref{avantSobolev} and using  H\"older inequality,
         \begin{eqnarray}
            \int_{B_{k}}\vert \nabla^m (u-v_{k}) \vert^2_A \; dx &\leq& 2 \int_{B_k} f(u-v_{k}) \;dx  \notag \\
            &\leq& 2 \|f\|_{L^{p'}(B_k)} \|u-v_k\|_p \notag \\
            &\leq &   2C_S(p) \sqrt{C_A}\|f\|_{L^{p'}(B_k)}   \left( \int_{B_k} |\nabla^m(u-v_k)|_A^2 \;dx\right)^{\frac{1}{2}}, \notag
         \end{eqnarray}
   which finally yields,
   \begin{eqnarray}
            \int_{B_{k}}\vert \nabla^m (u-v_{k}) \vert^2_A \; dx \leq    4C_S(p)^2C_A \|f\|_{L^{p'}(B_k)}^2.  \label{encoreO}         \end{eqnarray}
  Here, $p'\geq 1$ is the conjugate exponent of $p$, defined in \eqref{defer}, so that
   \begin{eqnarray}
  p'=
  \left\{\begin{array}{ll}
  \frac{2N}{N+2m} & \text{ if } 2m<N \\
  \text{ arbitrary close to  } 1 &  \text{ if } 2m=N\\
1& \text{ if } 2m>N.
  \end{array}
  \right. \label{deferP}
  \end{eqnarray}

In particular, under the assumptions of the proposition we know that $q\geq p'$. Let $s$ be such that $sp'=q$ (with $s=+\infty$ if $q=+\infty$). Then

$$\|f\|_{L^{p'}(B_k)}^{p'}\leq  |B_k|^{\frac{1}{s'}}\left( \int_{B_k} |f|^q \;dx\right)^{\frac{p'}{q}}, $$
and \eqref{encoreO} gives
   \begin{eqnarray}
            \int_{B_{k}}\vert \nabla^m (u-v_{k}) \vert^2_A \; dx &\leq&    4C_S(p)^2C_A C_N (a^k r_0)^\frac{2N}{p' s'} \|f\|_{L^{q}(\Omega)}^{2}.  \notag \\
           &=& 4C_S(p)^2C_A C_N (a^k r_0)^{2N \frac{q-p'}{p' q}} \|f\|_{L^{q}(\Omega)}^{2}.  \notag
                     \end{eqnarray}

Returning back to \eqref{setapge} we have obtain,
       \begin{align}
          \int_{B_{k+1}}\vert \nabla^mu \vert^2 dx &\leq  \max(A) C_A 2 C_I a^{N- b}a^{k(N-\eta)}\Vert f \Vert_{L^q(\Omega)}^2 \notag \\
          &+8C_A^2 C_S(p)^2  C_N (a^k r_0)^{2N \frac{q-p'}{p' q}} \|f\|_{L^{q}(\Omega)}^{2}. \label{2terms}
             \end{align}
      Let us check that each of the term in the sum on the right hand side above is less than $\frac{1}{2}a^{(k+1)(N-\eta)}C_I\Vert f \Vert_{L^q(\Omega)}^2$, which would be enough to finish the proof of   \eqref{recurrenceaveca}.
      
      For the first term it will be true provided that 
      $$ \max(A) C_A 2  a^{N- b}  \leq \frac{1}{2}a^{ N-\eta},$$
      which holds true thanks to our definition of $a$ (see \eqref{defa}).   
      
      For the second term we notice that $a$ has the following exponent:
$$ \gamma:=k2N \frac{q-p'}{p' q},$$
and we want this to be greater than  $(k+1)(N-\eta)$.  Then using that $k\geq k_0\geq \frac{N}{\eta}$ we estimate
\begin{eqnarray}
\gamma-(k+1)(N-\eta)&=&k2N \frac{q-p'}{qp'} -(k+1)(N-\eta) \notag \\
&=& \frac{1}{qp'} \left(2Nkq-k2Np'-Nkqp' +qp'(k\eta-N)+qp'\eta\right) \notag\\
&\geq &  \frac{1}{qp'} \left(2Nkq-k2Np'-Nkqp' \right) + \eta \notag \\
&\geq & \eta,
\end{eqnarray}
provided that $2Nkq-k2Np'-Nkqp'\geq 0$ or equivalently that 
\begin{eqnarray}
q(2-p') \geq 2p'. \label{conditionI}
\end{eqnarray} 
Then we consider three cases:

{\emph{Case 1.} If $2m>N$,} Then $p'=1$ and the condition in \eqref{conditionI} reduces to $q\geq 2$,  which is true.

{\emph{Case 2.}  If $2m=N$,} Then $p'$ can be chosen arbitrary close to $1$ thus we take $p'$ such that 
$$p'\leq \frac{2q}{1+q},$$
and then \eqref{conditionI} is true.

{\emph{Case 3.} If $2m<N$,} Then $p'= \frac{2N}{N+2m}$.  Thus a simple computation reveals that  \eqref{conditionI} is true   when $mq\geq N$, which is what we have assumed in the statement of the proposition.

{\emph{Conclusion.}} In all cases we have proved that the last term in \eqref{2terms} is always bounded by
$$ 
8C_A^2 C_S(p)^2  C_N r_0^{2N \frac{q-p'}{qp'}} a^{\eta} a^{(k+1)(N-\eta)}\|f\|_{L^{q}(\Omega)}^{2},$$
which is less than $\frac{1}{2}a^{(k+1)(N-\eta)}C_I\Vert f \Vert_{L^q(\Omega)}^2$ provided that 
$$8C_A^2 C_S(p)^2  C_N r_0^{2N \frac{q-p'}{qp'}} a^{\eta} \leq \frac{C_I}{2},$$
which can be  true by  defining well  $C_I$.  This finishes the proof of  \eqref{recurrenceaveca}.

 Now to finish the proof,  let $r\in (0, a^{k_0} r_0)$.  We can choose $k \geq  k_0$ such that $r < a^{k}r_0$ and $a^{k+1}r_0 \leq r$. One has
    \begin{align*}
           \int_{B(x,r)  }\vert \nabla^m u \vert^2 dx &\leq    \int_{B(x,a^kr_0) }\vert \nabla^m u \vert^2 dx \\
           &\leq C_{I} a^{k(N-\eta)}\Vert f \Vert^2_{L^q(\Omega)} \\
           &\leq C_{I} \left( \frac{r}{a r_0}\right)^{N-\eta}\Vert f\Vert^2_{L^q(\Omega)}.
    \end{align*}
On the other hand if $r\in (a^{k_0}r_0, r_0)$,   we can argue as for the proof of \eqref{induction09} yielding

   \begin{eqnarray}
        \int_{B_{r} }\vert \nabla^m u \vert^2 dx \leq C_AC_P|\Omega|^{1-\frac{2}{q}} \|f\|_{L^q(\Omega)}^2 \leq C_AC_P|\Omega|^{1-\frac{2}{q}}\frac{1}{(a^{k_0}r_0)^{N-\eta}} r^{N-\eta}\|f\|_{L^q(\Omega)}^2,\label{induction090}
    \end{eqnarray}

   and the proof is completed with $C :=\max (C_{I}(a r_0)^{\eta - N},C_AC_P|\Omega|^{1-\frac{2}{q}}\frac{1}{(a^{k_0}r_0)^{N-\eta}})$.
\end{proof}

Following exactly the same proof as the one of Proposition \ref{decroissanceenergiereifenbergplat}, we can also obtain the same for interior points.

\begin{proposition}[Energy decay for a solution in the interior]\label{decroissanceenergiereifenbergplatINT}
    Let $q \geq 2$ be such that $mq\geq N$ if $2m< N$, and $q=2$ otherwise. Let  $\eta \in (0,1)$ be given. Then there exists $r_0 \in (0,1)$ depending on $N$,  $\max_{\alpha,\beta} |a_{\alpha,\beta}|$,  and $m$  such that for all   $\Omega\subset \R^N$ open, for all $f\in L^q(\Omega)$,   the weak solution $u \in H^m_0(\Omega)$ of $\mathscr{A}(u)=f$ satisfies
    \begin{align*}
        \int_{B(x,r) }\vert \nabla^m u \vert^2 dx \leq C   r^{N- \eta}\Vert f\Vert^2_{L^q(\Omega)}, \quad \forall r\in(0,r_0), \quad x\in  \Omega, \text{ such that } B(x,r)\subset \Omega,
    \end{align*}
    where $C:= C(A,N,m,r_0,\eta, \Omega)>0$.
  \end{proposition}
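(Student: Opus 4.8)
The plan is to transcribe, essentially verbatim, the proof of Proposition \ref{decroissanceenergiereifenbergplat}, replacing the single ingredient that is genuinely about the boundary — the decay of poly-harmonic functions near a Reifenberg-flat boundary (Proposition \ref{propdecroissanceenergiedebut}) — by its interior analogue, namely the decay \eqref{propdecroissanceenergiedebutformule0} from Corollary \ref{theoremederegularite3INT}. Concretely, I would fix $0<b<\eta$, choose $a\in(0,1)$ exactly as in \eqref{defa}, and take $r_0$ to be the constant $R_0$ of Corollary \ref{theoremederegularite3INT}; observe that no smallness of a Reifenberg parameter is needed here, since the interior decay requires no compactness argument, so $r_0$ depends only on $N$, $\max_{\alpha,\beta}|a_{\alpha,\beta}|$ and $m$, as claimed. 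For $x$ as in the statement and $k\in\N$ I would set $B_k:=B(x,a^kr_0)$, let $v_k$ be the poly-harmonic replacement of $u$ in $B_k$, and note that, since $a^kr_0\le r_0\le R_0$, Corollary \ref{theoremederegularite3INT} yields
\begin{align*}
   \int_{B_{k+1}}|\nabla^m v_k|^2\,dx \;\le\; C_N\, a^{N}\int_{B_k}|\nabla^m v_k|^2\,dx \;\le\; C_1\, a^{N-b}\int_{B_k}|\nabla^m v_k|^2\,dx,
\end{align*}
which is precisely the inequality used at the corresponding step of Proposition \ref{decroissanceenergiereifenbergplat}.

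From that point the induction on $k\ge k_0:=\lfloor N/\eta\rfloor+1$ establishing the analogue of $(H_k)$ should go through word for word. The base case at scale $a^{k_0}r_0$ follows, as in \eqref{induction09}, from ellipticity together with the Poincaré inequality in $H^m_0(\Omega)$, namely $\int_\Omega|\nabla^m u|^2\,dx\le C_A\int_\Omega A\nabla^m u\cdot\nabla^m u\,dx=C_A\int_\Omega fu\,dx\le C_AC_P^2|\Omega|^{1-\frac{2}{q}}\|f\|_{L^q(\Omega)}^2$. The inductive step splits $\int_{B_{k+1}}|\nabla^m u|^2$ into the contribution of $v_k$, controlled by the displayed decay, and that of $u-v_k$, which is handled exactly as in \eqref{egalitepytagore11}--\eqref{encoreO}: the $A$-orthogonality of $u-v_k$ to $v_k$ (Pythagoras) together with the minimality of $u$ give $\int_{B_k}|\nabla^m(u-v_k)|^2_A\le 2\int_{B_k}f(u-v_k)\,dx$, and then Sobolev with the exponent $p$ of \eqref{defer}, Poincaré (Lemma \ref{poincare}) and Hölder in $L^q$ close the estimate. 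The choice \eqref{defa} of $a$ absorbs the first term into $\tfrac{1}{2}a^{(k+1)(N-\eta)}C_I$, and the second term is absorbed by the very same case analysis on $2m$ versus $N$ leading to condition \eqref{conditionI} — which is exactly where the hypothesis $mq\ge N$ (or $q=2$) is used. Finally, interpolating between the scales $a^kr_0$ and treating $r\in(a^{k_0}r_0,r_0)$ by the trivial global bound as in \eqref{induction090} would give the conclusion with $C=C(A,N,m,r_0,\eta,\Omega)$.

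The only point that I expect to require a genuine adaptation rather than a mere copy is the bookkeeping about which balls lie inside $\Omega$: in Proposition \ref{decroissanceenergiereifenbergplat} one works with $B_k\cap\Omega$ and with replacements solved in $B_k\cap\Omega$, whereas here I need $B_k\subset\Omega$ in order to apply Corollary \ref{theoremederegularite3INT} to $v_k$. This is ensured, for the scales that matter, by the hypothesis $B(x,r)\subset\Omega$ — one simply takes the base scale no larger than the inradius of $\Omega$ at $x$ and lets the constant depend on $\Omega$. I do not anticipate any serious obstacle beyond this: if anything the interior case is easier than the boundary one, since \eqref{propdecroissanceenergiedebutformule0} holds with the sharp exponent $N$ and with no flatness assumption, and $r_0$ may be taken to depend only on the operator.
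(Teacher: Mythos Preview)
Your proposal is correct and matches the paper's own proof essentially verbatim: the paper simply states that the argument is ``exactly the same as Proposition \ref{decroissanceenergiereifenbergplat}'', with Corollary \ref{theoremederegularite3INT} substituted for Proposition \ref{propdecroissanceenergiedebut}, which is precisely what you do. Your flagging of the ball-containment bookkeeping (ensuring $B_k\subset\Omega$ so that the replacement $v_k$ is $\mathscr{A}$-harmonic in a full ball) is apt and is indeed the only point where the transcription is not literal; in the paper's application this is ultimately absorbed by combining with the boundary estimate in Theorem \ref{regularitereifenbergplat}.
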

  
  \begin{proof} The proof is exactly the same as Proposition \ref{decroissanceenergiereifenbergplat} using a polyharmonic replacement of $u$ in $B(x,r)$ to obtain some decay property on the energy. The only difference is that we use the interior decay property of a polyharmonic function stated in Corollary \ref{theoremederegularite3INT} instead of the Boundary decay of Proposition \ref{propdecroissanceenergiedebut} that was used in the proof of Proposition \ref{decroissanceenergiereifenbergplat}.
  \end{proof}

\section{Boundary regularity in a Reifenberg flat domain}

 This very short section contains the proof of our main regularity result, namely Theorem \ref{main}. The strategy is to use the energy decay of Proposition \ref{decroissanceenergiereifenbergplat} and Proposition \ref{decroissanceenergiereifenbergplatINT} of the derivative of the solution   to estimate its Campanato space norm. Here is our main result.

  \begin{theorem}\label{regularitereifenbergplat}
   Let $\alpha \in (0,1)$ and $q\geq 2$ be such that $mq\geq N$ if $2m<N$, and $q=2$ otherwise. There exists $\varepsilon_0 \in (0,1)$ and $r_0 \in (0,1]$ such that for every $(\varepsilon_0,r_0)-$Reifenberg-flat domain $\Omega\subset \R^N$, for every function $f\in L^q(\Omega)$, if $u \in H^m_0(\Omega)$ is the weak solution of $\mathscr{A}(u) = f$, then $u\in \mathscr{C}^{m-1,\alpha}(\overline{\Omega})$, and 
    $$\|u\|_{\mathscr{C}^{m-1,\alpha}(\overline{\Omega})} \leq C  \|f\|_{L^q(\Omega)},$$
    where $C>0$ depends on $N$,  $A$,  $\alpha$, $\Omega$ and $m$.
     \end{theorem}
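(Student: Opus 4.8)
The plan is to deduce the theorem directly from the Campanato criterion of Proposition~\ref{continuitedecroissance}, fed with the two energy--decay estimates already established: the boundary decay of Proposition~\ref{decroissanceenergiereifenbergplat} and the interior decay of Proposition~\ref{decroissanceenergiereifenbergplatINT}. No new analysis is needed at this point; the work consists in calibrating the decay exponent to the target H\"older exponent $\alpha$, and in gluing the interior and boundary estimates into a single decay estimate valid for \emph{every} ball centred in $\Omega$.

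First I would fix the exponent. Given $\alpha\in(0,1)$, choose $\eta\in(0,1)$ with $\eta\le 2(1-\alpha)$, so that $N-\eta\ge N-2+2\alpha$; since we will only use radii $r<1$, this yields $r^{N-\eta}\le r^{N-2+2\alpha}$. Let $r_0\in(0,1]$ be the radius furnished by Proposition~\ref{decroissanceenergiereifenbergplatINT} (it depends only on $N$, $\max_{\alpha,\beta}|a_{\alpha,\beta}|$ and $m$), and apply Proposition~\ref{decroissanceenergiereifenbergplat} with this $\eta$ and this $r_0$ to obtain the corresponding $\varepsilon_0\in(0,1)$. Then, for every $(\varepsilon_0,r_0)$-Reifenberg--flat domain $\Omega$, every $f\in L^q(\Omega)$ as in the statement, and the weak solution $u\in H^m_0(\Omega)$ of $\mathscr{A}u=f$, both propositions apply and give $\int_{B(x,r)}|\nabla^m u|^2\,dx\le C\,r^{N-\eta}\|f\|_{L^q(\Omega)}^2$ for $x\in\partial\Omega$, $r\in(0,r_0)$, and the same bound for $x\in\Omega$ whenever $B(x,r)\subset\Omega$, $r\in(0,r_0)$, with $C=C(A,N,m,r_0,\eta,\Omega)$.

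Next I would glue these estimates. Fix $x\in\Omega$ and $r\in(0,r_0/2)$ and set $d:=\mathrm{dist}(x,\partial\Omega)$. If $r\le d$ then $B(x,r)\subset\Omega$ and the interior estimate applies directly. If $r>d$, choose $\bar x\in\partial\Omega$ with $|x-\bar x|=d\le r$; then $B(x,r)\subset B(\bar x,2r)$ with $2r<r_0$, and the boundary estimate at $\bar x$ gives $\int_{B(x,r)}|\nabla^m u|^2\,dx\le\int_{B(\bar x,2r)}|\nabla^m u|^2\,dx\le C\,2^{N-\eta}\,r^{N-\eta}\|f\|_{L^q(\Omega)}^2$. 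In either case, using $r<1$ and $N-\eta\ge N-2+2\alpha$,
\[
\int_{B(x,r)}|\nabla^m u|^2\,dx\le C\, r^{N-2+2\alpha}\|f\|_{L^q(\Omega)}^2\qquad\text{for all }x\in\Omega,\ r\in(0,r_0/2).
\]
Now Proposition~\ref{continuitedecroissance}, applied with $r_0/2$ in place of $r_0$ and with $C_0:=C\|f\|_{L^q(\Omega)}^2$, yields $u\in\mathscr{C}^{m-1,\alpha}(\overline\Omega)$. To obtain the linear bound $\|u\|_{\mathscr{C}^{m-1,\alpha}(\overline\Omega)}\le C\|f\|_{L^q(\Omega)}$ one keeps track of the homogeneity in the proof of that proposition: each $\partial^{\gamma}u$ with $|\gamma|=m-1$ lies in $\mathcal{L}^{2,N+2\alpha}(\Omega)$ with seminorm at most $C\sqrt{C_0}=C\|f\|_{L^q(\Omega)}$, while the lower-order norms are controlled by $C\|\nabla^m u\|_{L^2(\Omega)}\le C\|f\|_{L^q(\Omega)}$ through the Poincar\'e inequality in $H^m_0(\Omega)$ together with the energy identity $\int_\Omega A\nabla^m u\cdot\nabla^m u=\int_\Omega fu$; iterating the Campanato step $m-1$ times as in Proposition~\ref{continuitedecroissance} finishes the estimate. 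Since $\varepsilon_0$ and $r_0$ were chosen depending only on the admissible data, this is precisely Theorem~\ref{main}.

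The step I expect to demand the most care is this last piece of bookkeeping combined with the interior/boundary gluing: one must check that the power of $r$ is genuinely preserved under the passage from an arbitrary ball to a boundary ball, and that the constants produced by Propositions~\ref{decroissanceenergiereifenbergplat}--\ref{decroissanceenergiereifenbergplatINT}, by the Campanato embedding \eqref{campanato}, and by the Poincar\'e inequality assemble into one constant depending only on $N$, $A$, $\alpha$, $m$ and $\Omega$. Beyond that there is no genuine obstacle here, all the substantial difficulty having already been absorbed into the energy-decay propositions of the previous section.
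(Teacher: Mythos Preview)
Your proposal is correct and follows essentially the same route as the paper: apply the interior and boundary energy-decay propositions, glue them by comparing an arbitrary ball to a boundary ball of twice the radius, and conclude via the Campanato criterion of Proposition~\ref{continuitedecroissance}. Your write-up is in fact slightly more careful than the paper's own proof in two respects---you explicitly match the exponent by taking $\eta\le 2(1-\alpha)$ rather than setting $\alpha:=1-\eta/2$ a posteriori, and you track the homogeneity in $\|f\|_{L^q}$ through the Campanato iteration to justify the linear norm bound---but the underlying argument is identical.
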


\begin{proof}
    Using the energy decay of Proposition \textit{\ref{decroissanceenergiereifenbergplat}}, for every $\eta \in (0,1)$, there exists $\varepsilon_0 \in (0,1)$ and $r_0 \in (0,1]$ such that for all $y\in \partial\Omega$ and $r\in(0,r_0)$,
    \begin{align}\label{ontheboundary}
         \int_{\Omega \cap B(y,r)}\vert \nabla^m u \vert^2 dx \leq C r^{N- \eta}\Vert f\Vert^2_{L^q(\Omega)}.
    \end{align}    
Furthermore, Proposition \textit{\ref{decroissanceenergiereifenbergplatINT}} yields   
    \begin{align}\label{interiorEE}
        \int_{B(x,r) }\vert \nabla^m u \vert^2 dx \leq C   r^{N- \eta}\Vert f\Vert^2_{L^q(\Omega)}, \quad \forall r\in(0,r_1), \quad x\in  \Omega, \text{ such that } B(x,r)\subset \Omega.
    \end{align}
  All together, for any ball $B(x,r)$ with $x\in \Omega$ and $r\leq r_2:=\min(r_0,r_1)$, applying either \eqref{interiorEE} if $B(x,r)\subset \Omega$ or \eqref{ontheboundary} in $B(z,2r)$ with $z\in \partial\Omega$ such that $B(x,r)\subset B(z,2r)$ if $B(x,r)\cap \partial \Omega\not =\emptyset$, we deduce that 
  \begin{align}\label{interiorEEE}
        \int_{B(x,r) }\vert \nabla^m u \vert^2 dx \leq C   r^{N- \eta}\Vert f\Vert^2_{L^q(\Omega)}, \quad \forall r\in(0,r_2), \quad x\in  \Omega.
    \end{align}

By  Proposition \ref{continuitedecroissance}, we get $u \in\mathscr{C}^{m-1, \alpha}(\overline{\Omega})$  with $\alpha := 1 - \eta/2$ and 
 $$\|u\|_{\mathscr{C}^{m-1,\alpha}(\overline{\Omega})} \leq C  \|f\|_{L^q(\Omega)},$$
and the Theorem follows.
\end{proof}

\bibliography{biblio}
\bibliographystyle{plain}

 \end{document}